\newtheorem{theorem}{Theorem}[section]
\newtheorem{proposition}[theorem]{Proposition}
\newtheorem{lemma}[theorem]{Lemma}
\newtheorem{corollary}[theorem]{Corollary}
\theoremstyle{definition}
\newtheorem{definition}[theorem]{Definition}
\theoremstyle{remark}
\newtheorem*{remark}{Remark}
\numberwithin{equation}{section}
\renewcommand{\phi}{\varphi}
\newcommand{\bl}{\mbox{$\lambda\kern-0.53em\lambda$}}% bold lambda
\newcommand{\bmu}{\mbox{$\mu\kern-0.55em\mu$}}% bold mu
\newcommand{\bnu}{\mbox{$\nu\kern-0.51em\nu$}}% bold nu
\def\bphi{\mbox{$\varphi\kern-0.59em\varphi$}}% bold phi
\def\R{\mathbb R}
\def\Z{\mathbb Z}
\def\sK{\mathcal K}
\newcommand{\mb}[1]{{\textbf {\textit#1}}}
\renewcommand{\binom}[2]{{C_{#1}^{#2}}}
\newcommand{\rank}{\mathop{\mathrm{rank}}}
\def\raag{\mbox{\it RA\/}}
\def\racg{\mbox{\it RC\/}}
\newcommand{\rk}{\mathcal R_{\mathcal K}}
\def\pt{\mathit{pt}}
\def \RCK {{\racg_\sK}}
\def \K {\mathcal{K}}
\keywords{right-angled Coxeter group, associated graded Lie algebra, graph, lower central series}
\begin{document}

\title[On consecutive factors of the lower central series of $\racg_\sK$]{On consecutive factors of the lower central series of right-angled Coxeter groups}
\author{Yakov Veryovkin}

\address{\parbox{\linewidth}{International Laboratory of Algebraic Topology and Its Applications,\\National Research University Higher School of Economics, Moscow, Russia;\\Lomonosov Moscow State University, Moscow, Russia}}
\email{verevkin\_j.a@mail.ru}

\author{Temur Rahmatullaev}
\address{\parbox{\linewidth}{International Laboratory of Algebraic Topology and Its Applications,\\National Research University Higher School of Economics, Moscow, Russia;\\Lomonosov Moscow State University, Moscow, Russia}}
\email{temurbek.rakhmatullaev@math.msu.ru}

\thanks{The first author's research was supported by the Russian Science Foundation, project 23-11-00143, \href{https://rscf.ru/en/project/23-11-00143/}{https://rscf.ru/en/project/23-11-00143/}, at the Steklov Mathematical Institute of the Russian
Academy of Sciences (Secs. 1, 2, 4, and 5). The second author’s research was carried out in the
framework of the ``Mirror Laboratories'' project at National Research University Higher School of
Economics (Sec. 3).}

\begin{abstract}
We study the lower central series of the right-angled Coxeter group $\racg_\sK$ and the
corresponding associated graded Lie algebra $L(\racg_\sK)$ and describe the basis of the fourth graded component of $L(\racg_\sK)$ for any $\sK$.
\end{abstract}

\maketitle
\section{Introduction}
A right-angled Coxeter group $\racg_\sK$ is a group with $m$
generators $g_1,\ldots,g_m$ satisfying the relations $g_i^2=1$ for all $i \in \{1,\ldots,m\}$ and the commutation relations
$g_ig_j=g_jg_i$ for some pairs $\{i,j\}$. Each such group can be defined by a graph $\sK^1$ with $m$ vertices, where two vertices are connected by an edge whenever the corresponding generators commute. Right-angled Coxeter groups are classical objects in geometric group theory. In the present paper, we study the lower central series of the right-angled Coxeter groups $\racg_\sK$ and the corresponding associated graded Lie algebra $L(\racg_\sK)$.

For the right-angled Artin groups $\raag_\sK$ (which differ from the right-angled Coxeter groups $\racg_\sK$ in that the relations $g_i^2 = 1$ are absent) the associated Lie algebars $L(\raag_\sK)$ were completely calculated in~\cite{Duch-Krob}, see also~\cite{WaDe},~\cite{Papa-Suci}. Specifically, it was proved that the Lie algebra $L(\raag_\sK)$ is isomorphic to the graph Lie algebra (over $\mathbb Z$) corresponding to the graph $\sK^1$.

For the right-angled Coxeter groups, the quotient groups $\gamma_1(\racg_\sK) / \gamma_n(\racg_\sK)$ were studied in some
special cases for some $n$ (\cite{Struik1},~\cite{Struik2}). For $n \geqslant 4$ these studies encountered some difficulties similar to those we experienced when calculating the successive quotients $\gamma_{n-1}(\racg_\sK) / \gamma_n(\racg_\sK)$. The problem of describing the associated Lie algebra $L(\racg_\sK)$ for the right-angled Coxeter groups is much more
complicated than for the right-angled Artin groups, because the Lie algebra $L(\racg_\sK)$ and the graph Lie
algebra $L_\sK$ over $\mathbb Z_2$ are not necessarily isomorphic (see~\cite[Example 4.3]{ve3}). In~\cite{ve3}, a Lie algebra epimorphism $L_\sK \rightarrow L(\racg_\sK)$ was constructed, its kernel was described in severl cases, and a combinatorial description of the bases of the first three graded components of the Lie algebra was given for an arbitrary group $\racg_\sK$. In~\cite{ve5}, a minimal set of generators in $L^4(\racg_\sK)$ was constructed for the case in whick $\sK$ is the discrete simplicial complex on four points or an arbitrary simplicial complex on three points.

In~\cite{WaldingerLCS}, the dimensions of successive lower central series quotients were calculated and bases in these
quotients were constructed for free products of direct sums of cyclic groups of order 2, which form
a subset of the set of right-angled Coxeter groups. In the present paper, we construct a basis of the fourth
graded component of the associated Lie algebra for Coxeter groups on four points (for the case of two
points, the associated Lie algebra has been completely described; see~\cite[Proposition 4.4]{ve3}) and present
an algorithm for constructing a basis of the fourth graded component of the associated Lie algebra for
Coxeter groups in the general case. In contrast to the bases constructed in~\cite{WaldingerLCS}, our bases completely consist of nested commutators.

The first author is a winner of the 2019 ``Young Mathematics of Russia'' competition. The author
expresses deep gratitude to his scientific supervisor Taras Evgenievich Panov for setting the problem, constant attention, and assistance in the work.

\section{Preliminaries}
Let $\sK$ be an (abstract) simplicial complex on the set $ [m] = \{1,2, \dots, m \}$. A subset $$I=\{i_1,\ldots,i_k\}\in\mathcal K$$ is called a
\emph{simplex} (or a \emph{face}) of~$\sK$. Throughout the paper, we assume that $\sK$ contains $\varnothing$ and all singletons $\{i\}$, $i = 1, \ldots, m$.

By $F_m$ or $F(g_1, \ldots, g_m)$ we denote the free group of rank $m$ with generators $g_1, \ldots, g_m$.

The \emph{right-angled Coxeter (Artin) group} corresponding to $\sK$ is the group $\racg_\sK$ ($\raag_\sK$) defined by generators and relations as follows:
\begin{align*}
  \racg_\sK&= F(g_1,\ldots,g_m)\big/ (g_i^2 = 1 \text{ for } i \in \{1, \ldots, m\}, \; \; g_ig_j=g_jg_i\text{ for
  }\{i,j\}\in\sK),\\
  \raag_\sK&= F(g_1,\ldots,g_m)\big/ (g_ig_j=g_jg_i\text{ for
  }\{i,j\}\in\sK).
\end{align*}

It follows from the definition that the group $\racg_\sK$ ($\raag_\sK$) depends only on the graph~$\sK^1$, i.e., on the
1-skeleton of~$\sK$.

Consider the construction of the polyhedral product. Let $\sK$ be a simplicial complex on the set~$[m]$, and let
\[
  (\mb X,\mb A)=\{(X_1,A_1),\ldots,(X_m,A_m)\}
\]
be a sequence of $m$ pairs of pointed topological spaces, where $\pt\in A_i\subset X_i$. For each subset
$I\subset[m]$, we set
\begin{equation}\label{XAI}
  (\mb X,\mb A)^I=\bigl\{(x_1,\ldots,x_m)\in
  \prod_{k=1}^m X_k\colon\; x_k\in A_k\quad\text{for }k\notin I\bigl\}
\end{equation}
and define the \emph{polyhedral product}
\[
  (\mb X,\mb A)^{\sK}=\bigcup_{I\in\mathcal K}(\mb X,\mb A)^I=
  \bigcup_{I\in\mathcal K}
  \Bigl(\prod_{i\in I}X_i\times\prod_{i\notin I}A_i\Bigl),
\]
where the union is taken inside the product $\prod_{k=1}^m X_k$.

If all pairs $(X_i,A_i)$ are the same, i.e., $X_i=X$ and
$A_i=A$ for all $i=1,\ldots,m$, then we use the notation $(X,A)^\sK$
for $(\mb X,\mb A)^\sK$. Further, if all $A_i$ are $\pt$, then we use the shorthand notation $\mb X^\sK$ for $(\mb X,\pt)^\sK$ and accordingly
$X^\sK$ for $(X,\pt)^\sK$.
For a detailed description of this construction and examples, see~\cite[\S3.5]{bu-pa00},~\cite{b-b-c-g10},~\cite[\S4.3]{bu-pa15}.

Let $(X_i,A_i)=(D^1,S^0)$ for all $i$, where $D^1$ is the closed interval and $S^0$ is the boundary of $D^1$ (a set consisting of two points). The corresponding polyhedral product is known as a \emph{real moment-angle complex}~\cite[\S3.5]{bu-pa00},~\cite{bu-pa15} and is denoted by~$\rk$:
\begin{equation}\label{rk}
  \rk=(D^1,S^0)^\sK=\bigcup_{I\in\sK}(D^1,S^0)^I.
\end{equation}

We also need th polyhedral product $(\R P^\infty)^\sK$, where $\R P^\infty$ is the infinite-dimensional projective space.

A simplicial complex $\sK$ is called a \emph{flag} complex if any set of its vertices pairwise connected by edges is the set of vertices of some simplex. Any flag complex $\sK$ is determined by its $1$-skeleton $\sK^1$.

The relationship between polyhedral products and right-angled Coxeter groups is as follows.

\begin{theorem}[{ \cite[Corollary~3.4]{pa-ve}}]\label{coxfund}
Let $\sK$ be a simplicial complex on $m$ vertices.
\begin{itemize}
\item[(a)] $\pi_1((\R P^\infty)^\sK)\cong\racg_\sK$.
\item[(b)] Each of the spaces $(\R P^\infty)^\sK$ and $\rk$ is aspherical if and only if $\sK$ is a flag complex.
\item[(c)] $\pi_i((\R P^\infty)^\sK)\cong\pi_i(\rk)$ for $i\ge2$.
\item[(d)] The group $\pi_1(\rk)$ is isomorphic to the commutant~$\racg'_\sK$.
\end{itemize}
\end{theorem}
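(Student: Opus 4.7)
The plan is to unify all four parts by realising the pair $\rk \to (\R P^\infty)^\sK$ as a regular covering with deck transformation group $(\Z/2)^m$. The key homotopy equivalence is
\[
  \rk = (D^1,S^0)^\sK \;\simeq\; (E\Z/2,\Z/2)^\sK,
\]
which holds because $D^1\simeq E\Z/2\simeq\pt$ while $S^0=\Z/2$ is the same space with its free involution. The componentwise quotient maps $E\Z/2\to B\Z/2=\R P^\infty$ assemble into a principal $(\Z/2)^m$-bundle $(E\Z/2,\Z/2)^\sK\to(\R P^\infty,\pt)^\sK$, yielding a fibration
\[
  \rk\longrightarrow(\R P^\infty)^\sK\longrightarrow(\R P^\infty)^m.
\]

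For part (a), I would apply van Kampen's theorem to the open cover of $(\R P^\infty)^\sK$ indexed by the simplices of $\sK$: each simplex $I\in\sK$ contributes an open thickening of $\prod_{i\in I}\R P^\infty$, whose fundamental group is the abelian group $(\Z/2)^{|I|}$, and intersections correspond to face inclusions. The resulting pushout of groups is exactly the presentation of $\racg_\sK$, namely one involution per vertex together with commutation relations $g_ig_j=g_jg_i$ for every edge $\{i,j\}\in\sK$.

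Parts (c) and (d) then drop out of the long exact homotopy sequence of the fibration. Since $(\R P^\infty)^m$ is a $K((\Z/2)^m,1)$, the higher part of the sequence gives $\pi_i(\rk)\cong\pi_i((\R P^\infty)^\sK)$ for $i\ge 2$, proving (c). At the bottom it reads
\[
  1\longrightarrow\pi_1(\rk)\longrightarrow\racg_\sK\longrightarrow(\Z/2)^m\longrightarrow 1,
\]
whose right-hand surjection sends each $g_i$ to the corresponding standard generator of $(\Z/2)^m$ and is therefore the abelianisation of $\racg_\sK$; its kernel is the commutator subgroup $\racg'_\sK$, which is exactly~(d).

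Part (b) is the hard one. By (c) the asphericity of $(\R P^\infty)^\sK$ is equivalent to the asphericity of $\rk$, so everything reduces to showing that $\rk$ is aspherical iff $\sK$ is flag. I would use the natural cubical structure of $\rk$ as a subcomplex of $[-1,1]^m$: a direct inspection shows that the link of every vertex in $\rk$ is combinatorially isomorphic to $\sK$ itself. Gromov's link condition then tells us that $\rk$ is locally CAT(0) precisely when every vertex link is a flag simplicial complex, i.e.\ when $\sK$ is flag; in that case the universal cover of $\rk$ is a complete CAT(0) cube complex, hence contractible, so $\rk$ is aspherical. For the converse, a minimal missing simplex of $\sK$ on vertices $i_1,\dots,i_k$ produces the boundary of an unfilled $k$-cube sitting inside $\rk$, giving a nontrivial class in $\pi_{k-1}(\rk)$. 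This CAT(0)/link-condition verification, which ties the combinatorics of missing faces of $\sK$ to the geometry of cubes in $\rk$, is the main technical obstacle of the proof.
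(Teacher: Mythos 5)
The paper does not prove this statement at all: it is quoted verbatim from \cite[Corollary~3.4]{pa-ve} as background, so there is no in-paper proof to compare against. Your reconstruction is essentially the standard argument from that reference and from \cite[\S4.3]{bu-pa15}: identify $\rk\simeq(E\Z_2,\Z_2)^\sK$, use the resulting covering/fibration over $(\R P^\infty)^\sK$ and $(\R P^\infty)^m$ for (c) and (d), van Kampen (or the colimit description of $\pi_1$ of a polyhedral product over the face category of $\sK$) for (a), and the cubical CAT(0) geometry of $\rk\subset[-1,1]^m$ for (b). The identification of each vertex link of $\rk$ with $\sK$ is correct, since the cubes of $\rk$ through a vertex $\varepsilon\in\{-1,1\}^m$ are indexed exactly by the faces of $\sK$, so Gromov's lemma gives the flag direction of (b).

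The one place where your sketch has a genuine gap is the converse direction of (b). Exhibiting the boundary of an unfilled $k$-cube inside $\rk$ does not by itself give a \emph{nontrivial} element of $\pi_{k-1}(\rk)$: that sphere could a priori bound in the larger complex. The standard fix is to take a minimal non-face $J=\{i_1,\dots,i_k\}$ of the non-flag complex $\sK$ (so $k\ge3$, every proper subset of $J$ is a face, and $\sK_J=\partial\Delta^{k-1}$), and observe that the coordinate projection $[-1,1]^m\to[-1,1]^J$ restricts to a retraction $\rk\to\mathcal R_{\sK_J}=\partial([-1,1]^k)\cong S^{k-1}$; the induced surjection $\pi_{k-1}(\rk)\to\pi_{k-1}(S^{k-1})\cong\Z$ then shows $\rk$ is not aspherical. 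With that retraction supplied (and, pedantically, the cofibration hypotheses needed for the homotopy invariance $(D^1,S^0)^\sK\simeq(E\Z_2,\Z_2)^\sK$), your outline is a complete and correct proof.
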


For each subset~$J\subset[m]$, consider the restriction
\[
  \sK_J=\{I\in\sK\colon I\subset J\},
\]
of $\sK$ to~$J$, which is also called a \emph{full subcomplex} of~$\sK$.

The following theorem gives a combinatorial description of the homology of the real moment-angle
complex $\mathcal R_\sK$.

\begin{theorem}[{\cite{bu-pa00}, \cite[\S4.5]{bu-pa15}}]\label{homrk} For any $k\geq 0$, one has an isomorphism
\[
  H_k(\rk;\Z)\cong\bigoplus_{J\subset[m]}\widetilde
  H_{k-1}(\sK_J),
\]
where $\widetilde H_{k-1}(\sK_J)$ is the reduced simplicial homology group of the simplicial complex~$\sK_J$.
\end{theorem}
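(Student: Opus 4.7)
The plan is to put an explicit CW structure on $\rk$ inherited from $(D^1)^m$ and then split the resulting cellular chain complex after a carefully chosen change of basis. I would equip each $D^1$ factor with the standard CW structure having two $0$-cells $e_+,e_-$ (the points of $S^0$) and one $1$-cell $e_0$, so that $\partial e_0=e_+-e_-$. The product CW structure on $(D^1)^m$ has cells $\bigotimes_{i=1}^m e_{c_i}$ with $c_i\in\{+,-,0\}$, and definition~\eqref{rk} shows that such a cell lies in $\rk$ exactly when $L:=\{i:c_i=0\}$ is a simplex of $\sK$. Hence the cells of $\rk$ are naturally indexed by pairs $(L,\omega)$ with $L\in\sK$ and $\omega:[m]\setminus L\to\{+,-\}$, the cell $(L,\omega)$ having dimension $|L|$.

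The key step would be a change of basis in the chain complex of each $D^1$ factor: replace $\{e_+,e_-\}$ by the $\Z$-basis $\{a_i,b_i\}$ with $a_i=e_+$ and $b_i=e_+-e_-$. Then $\partial e_0=b_i$ while $\partial a_i=\partial b_i=0$, and the cellular complex of the $i$th factor splits as $\Z\langle a_i\rangle$ plus the acyclic summand $\bigl(\Z\langle e_0\rangle\to\Z\langle b_i\rangle\bigr)$. Taking tensor products over $i$, $C_*(\rk)$ acquires a new $\Z$-basis indexed by triples $(L,T,S)$, where $L\in\sK$ and $\{T,S\}$ partitions $[m]\setminus L$, with $T$ marking the coordinates where one picks $b_i$; the corresponding basis element is $x_{L,T,S}=\bigotimes_{i\in L}e_0\otimes\bigotimes_{i\in T}b_i\otimes\bigotimes_{i\in S}a_i$. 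Expanding the Leibniz rule in this new basis yields $\partial x_{L,T,S}=\sum_{j\in L}\pm\, x_{L\setminus\{j\},\,T\cup\{j\},\,S}$, which preserves the set $J:=L\cup T$.

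Consequently $C_*(\rk)=\bigoplus_{J\subset[m]}C^{(J)}_*$, where $C^{(J)}_*$ has $\Z$-basis $\{x_L:L\in\sK_J\}$ with $\deg x_L=|L|$ and differential $\partial x_L=\sum_{j\in L}\pm\, x_{L\setminus\{j\}}$. Up to an overall degree shift by one, this is precisely the augmented simplicial chain complex of the full subcomplex $\sK_J$, so that $H_k(C^{(J)}_*)\cong\widetilde H_{k-1}(\sK_J)$; summing over $J$ then gives the desired formula. The one nonroutine task I foresee is verifying that the signs produced by the tensor-product differential on the basis $\{x_{L,T,S}\}$ agree, up to a chain isomorphism, with the standard simplicial signs on $\sK_J$; this is pure bookkeeping once a linear order on $\{1,\ldots,m\}$ is fixed, but it is the only step where care is required.
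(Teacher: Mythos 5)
The paper does not prove this statement—it imports it from \cite{bu-pa00} and \cite[\S4.5]{bu-pa15}—and your cellular argument is exactly the standard proof given there: the cubical cell structure on $\rk$ with cells indexed by $(L,\omega)$, the basis change $\{e_+,e_-\}\rightsquigarrow\{a_i,b_i\}$ splitting off the acyclic summands, and the resulting splitting $C_*(\rk)=\bigoplus_J C^{(J)}_*$ with $C^{(J)}_*$ the shifted augmented chain complex of $\sK_J$. The sign issue you flag does resolve as expected: ordering the tensor factors by the linear order on $[m]$, the Koszul sign attached to differentiating the factor $i_t\in L=\{i_0<\cdots<i_{k-1}\}$ is $(-1)^{\#\{s\in L\,:\,s<i_t\}}=(-1)^t$, which is precisely the simplicial sign, so the identification is the identity on basis elements and the proof is complete.
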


If $\sK$ is a flag complex, then Theorem~\ref{homrk} also provides a description of the integer homology of the commutant $\racg_\sK'$.

\medskip
Let $G$ be a group. The \emph{commutator} of two elements $a, b \in G$ is defined by the formula $(a,b) = a^{-1}b^{-1}ab$.

The \emph{simple nested commutator} of length $k$ of elements $q_i$ is defined as the nested commutator
$$
(q_{i_1}, q_{i_2}, \ldots, q_{i_k}) := (\ldots((q_{i_1}, q_{i_2}), q_{i_3}), \ldots, q_{i_k}).
$$
Accordingly, in Lie algebras we consider the simple nested commutators
$$
[\mu_{i_1}, \mu_{i_2}, \ldots, \mu_{i_k}] := [\ldots[[\mu_{i_1}, \mu_{i_2}], \mu_{i_3}], \ldots, \mu_{i_k}].
$$

For any group $G$ and any three elements $a, b, c \in G$, one has the \emph{Witt--Hall identities}:
\begin{equation}\label{WH}
\begin{aligned}
(a, bc) &= (a, c) (a, b) (a, b, c),\\
(ab, c) &= (a, c) (a, c, b) (b, c),\\
(a,b,c)(b,c,a)(c,a,b)&=(b,a)(c,a)(c,b)^a(a,b)(a,c)^b(b,c)^a(a,c)(c,a)^b,
\end{aligned}
\end{equation}
where $a^b = b^{-1}ab$.

Let $H, W \subset G$ be subgroups. Then we define a subgroup $(H, W) \subset G$ as the subgroup generated by the commutators $(h, w)$, where $h \in H, w \in W$. In particular, the  \emph{commutant} $G'$ of the group $G$ is $(G, G)$.

For any group $G$, we set $\gamma_1(G) = G$, and further, inductively, $$\gamma_{k+1}(G) = (\gamma_{k}(G), G).$$ The resulting sequence of groups $\gamma_1(G), \gamma_2(G), \ldots, \gamma_k(G), \ldots$ is called the \emph{lower central series} of $G$.

If $H \subset G$ is a normal subgroup, i.e., $H = g^{-1}Hg$ for any $g \in G$, then we use the notation $H \lhd G$.

In particular, $\gamma_{k+1}(G) \lhd \gamma_k(G)$, and the quotient groups $\gamma_{k}(G) / \gamma_{k+1}(G)$ are abelian. We introduce the notation $L^k (G) := \gamma_{k}(G) / \gamma_{k+1}(G)$ and consider the direct sum
$$
L(G) := \bigoplus_{k=1}^{+\infty} L^k (G).
$$
By $\overline{a}_k$ we denote the class of an element $a_k \in \gamma_k(G) \subset G$ in the quotient group~$L^k (G)$. If $a_k \in \gamma_k(G), \; a_l \in \gamma_l(G)$, then $(a_k, a_l) \in \gamma_{k+l}(G)$. Then the Witt--Hall identities imply that $L(G)$ is a graded Lie algebra over $\mathbb{Z}$ (a Lie ring) with Lie brackets defined by the formula $[\overline{a}_k, \overline{a}_l] := \overline{(a_k, a_l)}$. The Lie algebra $L(G)$ is called the \emph{associated Lie algebra} of the group $G$.

\begin{theorem}[{\cite[Theorem 4.5]{pa-ve}}]\label{gscox}
Let $\racg_\sK$ be the right-angled Coxeter group corresponding to a simplicial complex~$\sK$ on $m$ vertices. Then the commutant $\racg'_\sK$ has a finite minimal set of generators consisting of $\sum_{J\subset[m]}\rank\widetilde
H_0(\sK_J)$ nested commutators of the form
\begin{equation}\label{commuset}
  (g_i,g_j),\quad (g_{i},g_j,g_{k_1}),\quad\ldots,\quad
%  (g_{k_1},(g_{k_2},\cdots(g_{k_{m-2}},(g_j,g_i))\cdots)),
  (g_{i},g_{j},g_{k_1},g_{k_2},\ldots,g_{k_{m-2}}),
\end{equation}
where $i < j > k_1 > k_2 > \ldots > k_{\ell-2}$, $k_s\ne i$ for all~$s$, and $i$ is the least vertex in some connected component not containing $j$ of the full subcomplex $\sK_{\{i, j, k_1,\ldots,k_{\ell-2}\}}$.\end{theorem}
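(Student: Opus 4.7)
The plan is to compute $(\racg'_\sK)^{\text{ab}}$ homologically via Theorem~\ref{coxfund} and Theorem~\ref{homrk}, and then match a specific set of iterated commutators to an explicit $\Z$-basis of this abelianization. By Theorem~\ref{coxfund}(d) and the Hurewicz theorem,
\[
  (\racg'_\sK)^{\text{ab}} \;\cong\; H_1(\rk;\Z) \;\cong\; \bigoplus_{J\subset[m]} \widetilde{H}_0(\sK_J),
\]
which is free abelian of rank $N := \sum_{J\subset[m]}\rank\widetilde H_0(\sK_J)$. Hence any generating set of $\racg'_\sK$ has at least $N$ elements, and the theorem reduces to exhibiting exactly $N$ generators of the prescribed form.

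Next I would produce the generating set by iterated commutator calculus. Since $\racg'_\sK$ is the normal closure of $\{(g_i,g_j):\{i,j\}\notin\sK^1\}$, every element is a product of conjugates $w^{-1}(g_i,g_j)w$. Applying the Witt--Hall identities~(\ref{WH}) together with $g_s^2 = 1$, each such conjugate unfolds as a product of nested commutators $(g_{i_1},g_{i_2},\ldots,g_{i_\ell})$. Further applications of~(\ref{WH}) then allow me to normalize the index sequence modulo $\gamma_{\ell+1}(\racg_\sK)$, so that $j$ is the maximum of $\{i,j,k_1,\ldots,k_{\ell-2}\}$ and $k_1>\cdots>k_{\ell-2}$. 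The subtler component-minimality condition on $i$ is imposed by the following rewrite: if $i'<i$ lies in the same connected component of $\sK_J$ as $i$ (with $J$ the index set), then a path $i = v_0,v_1,\ldots,v_r = i'$ of edges in $\sK^1$ lets me replace $i$ step by step by $v_{s+1}$, each step using the commutation relation $(g_{v_s},g_{v_{s+1}})=1$ to rewrite the commutator based at $v_s$ in terms of one based at $v_{s+1}$, modulo commutators with strictly smaller index set.

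I would then establish minimality by exhibiting a bijection between the commutators in~(\ref{commuset}) and a $\Z$-basis of $\bigoplus_J \widetilde{H}_0(\sK_J)$: the commutator with index set $J$, $j = \max J$, and $i$ the minimum vertex of some component $C$ of $\sK_J$ not containing $j$ corresponds to the class $[g_i] - [g_j]\in\widetilde{H}_0(\sK_J)$. For each $J$, the classes $\{[g_i]-[g_j]: C\text{ a component of }\sK_J\text{ not containing }j\}$ form a $\Z$-basis of $\widetilde{H}_0(\sK_J)$, so the bijection yields exactly $N$ commutators. Combined with the spanning step, this proves both generation and minimality.

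The main obstacle will be the normalization step, and especially the component-minimality reduction. Tracking which commutators appear as error terms when pushing $i$ toward a smaller vertex in its component, and verifying that all such error terms lie in the span of the already-normalized commutators rather than producing genuinely new relations, is a delicate induction on commutator length. This is precisely the point at which the edge set of $\sK^1$ enters the combinatorics, and where right-angled Coxeter groups behave differently from right-angled Artin groups (for which the absence of $g_s^2=1$ forces a strictly larger Lie algebra).
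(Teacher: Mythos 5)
First, note that the paper does not prove this statement: it is imported verbatim from \cite[Theorem 4.5]{pa-ve}, so your proposal can only be compared with the argument in that source. Your overall architecture is the standard one. In particular, the minimality half --- identifying $(\racg'_\sK)^{\mathrm{ab}}$ with $H_1(\rk;\Z)\cong\bigoplus_{J\subset[m]}\widetilde H_0(\sK_J)$ via Theorem~\ref{coxfund}(d), the Hurewicz theorem and Theorem~\ref{homrk}, and then counting --- is exactly how minimality is obtained there, and is what the present paper records as Corollary~\ref{h1rk}. Once generation by the $N$ listed commutators is established, minimality is automatic, since a generating set of cardinality $N$ of the free abelian group $\Z^N$ is a basis; your explicit bijection with classes $[g_i]-[g_j]$ is a useful check on the count but is not logically needed.

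The gap is in the generation step, and one claim there is wrong as stated. When you push the base vertex $i$ to a smaller vertex $i'$ in the same component of $\sK_J$ along an edge path, the Witt--Hall error terms are not ``commutators with strictly smaller index set'': they are \emph{longer} nested commutators, lying deeper in the lower central series, whose index multisets contain $J$ (possibly with repetitions). The same issue affects the reordering step: normalizing ``modulo $\gamma_{\ell+1}(\racg_\sK)$'' for every $\ell$ proves a statement about the associated graded Lie algebra, not about the group, and the lower central series of $\racg_\sK$ does not terminate (already $\gamma_k(\Z_2*\Z_2)\cong\Z$ for all $k\ge 2$, so the ``error terms'' never die). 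To close this you need the two reductions that force the rewriting process to terminate: (i) a nested commutator with a repeated index collapses, using $g_s^2=1$, to a power of a strictly shorter one (e.g.\ $(g_1,g_2,g_1)=(g_1,g_2)^{-2}$), and (ii) a nested commutator with pairwise distinct indices has length at most $m$. Together these bound the depth of the recursion and upgrade ``generation modulo every $\gamma_\ell$'' to genuine generation of $\racg'_\sK$. You gesture at $g_s^2=1$ but never state this termination mechanism, and it --- rather than the component-minimality bookkeeping alone --- is where the real work in the proof of \cite[Theorem 4.5]{pa-ve} lies.
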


\begin{remark}
In~\cite{pa-ve}, the commutators are nested to the right. In the present paper, by convention, they are
nested to the left.
\end{remark}

From Theorems \ref{homrk} and \ref{gscox}, we obtain the following assertion.
\begin{corollary}\label{h1rk}
The group $H_1(\rk) = \racg_\sK' / \racg_\sK''$ is a free abelian group of rank $$\sum_{J\subset[m]}\rank\widetilde H_0(\sK_J)$$ with a basis consisting of images of iterated commutators described in Theorem~\ref{gscox}.
\end{corollary}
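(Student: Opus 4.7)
The plan is to assemble three ingredients already available in the excerpt: Theorem~\ref{homrk} (which computes the homology of $\rk$), Theorem~\ref{coxfund}(d) (which identifies $\pi_1(\rk)$ with $\racg'_\sK$), and Theorem~\ref{gscox} (which furnishes a specific minimal generating set for $\racg'_\sK$).

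First, I will use Theorem~\ref{homrk} with $k=1$ to get
\[
  H_1(\rk;\Z)\cong\bigoplus_{J\subset[m]}\widetilde H_0(\sK_J).
\]
Since each $\widetilde H_0(\sK_J)$ is free abelian of rank equal to one less than the number of connected components of $\sK_J$, this yields that $H_1(\rk;\Z)$ is free abelian of rank $\sum_{J\subset[m]}\rank\widetilde H_0(\sK_J)$, which gives the first identification in the statement.

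Next I need to justify the identification $H_1(\rk)=\racg'_\sK/\racg''_\sK$. By Theorem~\ref{coxfund}(d) we have $\pi_1(\rk)\cong\racg'_\sK$, and by the Hurewicz theorem $H_1$ of any path-connected space is the abelianization of $\pi_1$; hence $H_1(\rk)\cong\racg'_\sK/(\racg'_\sK,\racg'_\sK)=\racg'_\sK/\racg''_\sK$.

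Finally, to realize the explicit basis I will take the generating set of $\racg'_\sK$ from Theorem~\ref{gscox}: it consists of exactly $\sum_{J\subset[m]}\rank\widetilde H_0(\sK_J)$ nested commutators of the form~\eqref{commuset}. Their images in the abelianization $\racg'_\sK/\racg''_\sK=H_1(\rk)$ generate this group. Because $H_1(\rk)$ is free abelian of the same rank as the cardinality of this generating set, and any generating set of a free abelian group whose cardinality equals the rank must be a $\Z$-basis, the images of these commutators form a basis.

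The main (minor) obstacle is the last step: one has to invoke that a minimal generating set of a finitely generated free abelian group whose size coincides with the rank is necessarily a basis, ruling out the possibility that relations shrink the rank further. This is a standard fact (any surjection $\Z^n\twoheadrightarrow\Z^n$ is an isomorphism), but it is the one place where the counting argument must be combined with the free-abelian structure of $H_1(\rk)$ coming from Theorem~\ref{homrk}; without that structural input, equality of cardinalities alone would not suffice.
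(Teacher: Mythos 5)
Your proposal is correct and follows the same route the paper intends: the paper derives the corollary directly from Theorems~\ref{homrk} and~\ref{gscox}, and your write-up simply makes explicit the intermediate steps (Theorem~\ref{coxfund}(d) plus Hurewicz to identify $H_1(\rk)$ with $\racg'_\sK/\racg''_\sK$, and the Hopfian property of $\Z^n$ to upgrade a generating set of the right cardinality to a basis). Nothing is missing.
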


The following standard result holds (see \cite[\S5.3]{Ma-Car-Sol}):

\begin{proposition}\label{comb}
Let $G$ be a group with generators $g_i, i \in I$. The terms $\gamma_k(G)$ of the lower central
series are generated by simple nested commutators of length $\geq k$ of the generators and their
inverses.
\end{proposition}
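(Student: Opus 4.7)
The plan is to prove by induction on $k$ the inclusion $\gamma_k(G) \subseteq \langle S_k \rangle$, where $S_k$ denotes the set of simple nested commutators of length $\geq k$ in the generators $g_i$ and their inverses; the reverse inclusion $\langle S_k \rangle \subseteq \gamma_k(G)$ is immediate from the definition of the lower central series. The base case $k = 1$ is trivial since $\gamma_1(G) = G$ is generated by the $g_i$, each a simple nested commutator of length one.

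For the inductive step, assume the inclusion for $k$. Since $\gamma_{k+1}(G) = (\gamma_k(G), G)$ is generated (as a subgroup) by commutators $(h, g)$ with $h \in \gamma_k(G)$ and $g \in G$, it suffices to show each such $(h, g) \in \langle S_{k+1} \rangle$. By the inductive hypothesis, $h = c_1^{\e_1} \cdots c_r^{\e_r}$ with each $c_i \in S_k$, while $g = d_1^{\delta_1} \cdots d_s^{\delta_s}$ is a product of generators and their inverses. The main technical step is an auxiliary claim, proved by induction on the word length $s$ of $Y$: for any single simple nested commutator $X$ of length $\geq l$ and any word $Y = d_1 \cdots d_s$ in generators and inverses, $(X, Y) \in \langle S_{l+1} \rangle$. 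When $s = 1$, $(X, Y)$ is itself a simple nested commutator of length $\geq l+1$. For $s > 1$, writing $Y = d_1 Y'$, the Witt--Hall identity~\eqref{WH} gives $(X, d_1 Y') = (X, Y')(X, d_1)(X, d_1, Y')$: the first factor lies in $\langle S_{l+1}\rangle$ by induction on $s$; the second lies in $S_{l+1}$ directly; and the third, $((X, d_1), Y')$, also lies in $\langle S_{l+2}\rangle \subseteq \langle S_{l+1}\rangle$ by the same induction, now with the length parameter bumped from $l$ to $l+1$ (since $(X, d_1) \in S_{l+1}$) and the word length decreased from $s$ to $s-1$.

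To deduce the main statement from the auxiliary claim, one expands $h = c_1^{\e_1} \cdots c_r^{\e_r}$ and uses the companion Witt--Hall identity $(xy, c) = (x, c)(x, c, y)(y, c)$ iteratively to reduce $(h, g)$ to a product of factors of the form $(c_i^{\pm 1}, Y)$ (plus correction terms of the same shape with an already-deeper nested commutator in the first slot). Inverses are handled via the identity $(c^{-1}, g) = c \cdot (g, c) \cdot c^{-1}$, followed by rewriting the conjugation as $c X c^{-1} = X \cdot (X, c^{-1})$ to reduce to further commutators whose first argument is a simple nested commutator. The main obstacle is the combinatorial bookkeeping required to ensure the cascade of Witt--Hall expansions and inverse-handling manipulations terminates; the key observation is that every correction term either strictly decreases a word-length being tracked or strictly increases the lower bound on the length of the nested commutator in its first slot, so the nested induction closes.
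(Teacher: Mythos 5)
The paper does not actually prove this proposition; it quotes it from Magnus--Karrass--Solitar, so there is no internal argument to compare yours against. Your auxiliary claim --- that $(X,Y)\in\langle S_{l+1}\rangle$ for a single simple nested commutator $X$ of length $\geq l$ and a word $Y$ of length $s$, proved by induction on $s$ via $(a,bc)=(a,c)(a,b)(a,b,c)$ --- is correct and is the heart of the standard textbook argument: both recursive calls strictly decrease $s$, and the quantifier over $l$ is universal in the inductive hypothesis, so that induction is well-founded.

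The gap is in the final bookkeeping. When you expand $(h,g)$ with $h=c_1^{\e_1}\cdots c_r^{\e_r}$ via $(ab,c)=(a,c)(a,c,b)(b,c)$, the correction term $((c_1^{\e_1},g),w)$ has in its first slot not a simple nested commutator but a \emph{product} of elements of $S_{k+1}$ and their inverses, whose number of factors is governed by the word length of $g$, not by $r$; the same happens in your inverse-handling step, where the ``$X$'' in $cXc^{-1}=X\cdot(X,c^{-1})$ is the product $(g,c)=(c,g)^{-1}$ rather than a single commutator. So each recursive call raises the depth parameter from $k$ to $k+1$ while possibly increasing every word length in sight. Your proposed measure --- ``either a word length strictly decreases or the depth lower bound strictly increases'' --- is therefore not well-founded: nothing bounds the depth (the $\gamma_N(G)$ need not become trivial), and each time it increases the other parameters may grow, so the cascade does not terminate as written. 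The standard repair, for which you already have every ingredient, is to note first that $\langle S_m\rangle$ is normal in $G$ for every $m$: for $X\in S_m$ and $g\in G$ one has $g^{-1}Xg=X\cdot(X,g)$, and $(X,g)\in\langle S_{m+1}\rangle\subseteq\langle S_m\rangle$ by your auxiliary claim. Granting normality of $N=\langle S_{k+1}\rangle$, every correction term $(u,w)$ with $u\in N$, $w\in G$ equals $u^{-1}\cdot u^{w}\in N$ with no further recursion, and the inverse case is disposed of by $(c^{-1},g)=\bigl((c,g)^{-1}\bigr)^{c^{-1}}\in N$. With that one observation inserted your argument closes; without it, it does not.
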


\begin{proposition}[{\cite[Proposition 3.3]{ve3}}]\label{kv}
The square of any element in $\gamma_k(\racg_\sK)$ lies in $\gamma_{k+1}(\racg_\sK)$.
\end{proposition}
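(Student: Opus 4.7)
The plan is to reformulate the claim as the statement that each graded component $L^k(\racg_\sK) = \gamma_k(\racg_\sK)/\gamma_{k+1}(\racg_\sK)$ is annihilated by~$2$, and to prove this by induction on~$k$. For the base case $k=1$, the group $L^1(\racg_\sK)$ is the abelianization of $\racg_\sK$, and the defining relations $g_i^2 = 1$ force it to be isomorphic to $\Z_2^m$, which is $2$-torsion.

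For the inductive step, I would use that by the very definition of the lower central series $\gamma_{k+1}(G) = (\gamma_k(G), G)$, so $L^{k+1}(\racg_\sK)$ is generated as an abelian group by the brackets $[\overline{a}, \overline{g}]$ with $\overline{a} \in L^k(\racg_\sK)$ and $\overline{g} \in L^1(\racg_\sK)$. The $\Z$-bilinearity of the Lie bracket on $L(\racg_\sK)$, a consequence of the Witt--Hall identities~\eqref{WH}, then gives $2[\overline{a}, \overline{g}] = [\overline{a}, 2\overline{g}] = 0$ because $L^1(\racg_\sK)$ is $2$-torsion by the base case. Every generator of $L^{k+1}(\racg_\sK)$ is therefore annihilated by~$2$, and hence so is the whole group.

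If one prefers to remain purely on the group-theoretic side, the same argument can be phrased via the Witt--Hall identity $(b, gh) = (b, h)(b, g)(b, g, h)$ with $h = g$, which rearranges to $(b, g)^2 = (b, g^2)(b, g, g)^{-1}$; both factors on the right lie in $\gamma_{k+2}(\racg_\sK)$ when $b \in \gamma_k(\racg_\sK)$ and $g \in \racg_\sK$, using $g^2 \in \gamma_2(\racg_\sK)$ from the base case together with $(\gamma_k(\racg_\sK), \gamma_2(\racg_\sK)) \subset \gamma_{k+2}(\racg_\sK)$. Since $\gamma_{k+1}/\gamma_{k+2}$ is abelian, the identity $(xy)^2 \equiv x^2 y^2 \pmod{\gamma_{k+2}(\racg_\sK)}$ propagates the vanishing from the commutator generators of $\gamma_{k+1}(\racg_\sK)$ to arbitrary elements. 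I do not foresee any real obstacle: the only nontrivial input is that the abelianization of $\racg_\sK$ is $2$-torsion, and everything else is formal manipulation in the lower central series.
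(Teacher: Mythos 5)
The paper does not actually prove this proposition---it is imported from \cite[Proposition~3.3]{ve3}---so there is no in-text proof to compare against, but your argument is correct and is essentially the standard proof of that cited result. Both of your formulations are sound: the needed inputs (the abelianization of $\racg_\sK$ is an elementary abelian $2$-group; $L^{k+1}(\racg_\sK)$ is generated by brackets $[\overline{a},\overline{g}]$ with $\overline{g}\in L^1(\racg_\sK)$; bilinearity of the bracket, or equivalently the Witt--Hall identity giving $(b,g)^2=(b,g^2)(b,g,g)^{-1}$ with both right-hand factors in $\gamma_{k+2}(\racg_\sK)$) are all available from the preliminaries, and the passage from commutator generators to arbitrary elements via $(xy)^2\equiv x^2y^2 \bmod \gamma_{k+2}(\racg_\sK)$ correctly closes the induction.
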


The following results hold as well.

\begin{proposition}[{\cite[the statement after (4.22)]{WaldingerLCS}}]\label{numbergens3}
Let $\sK$ be the discrete simplicial complex on three
points; i.e., $\racg_\sK = \mathbb Z_2\langle g_1 \rangle \ast \mathbb Z_2\langle g_2 \rangle \ast \mathbb Z_2\langle g_3 \rangle$. Then $L^4(\racg_\sK)$ has a minimal set of eight generators.
\end{proposition}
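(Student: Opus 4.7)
The simplicial complex $\sK$ is discrete on three vertices, so $\racg_\sK = \Z_2\<g_1\>\ast\Z_2\<g_2\>\ast\Z_2\<g_3\>$. Proposition~\ref{comb} together with $g_i^{-1}=g_i$ shows that, modulo $\gamma_5(\racg_\sK)$, the group $\gamma_4(\racg_\sK)$ is generated by the simple nested commutators $(g_{i_1},g_{i_2},g_{i_3},g_{i_4})$ with $i_j\in\{1,2,3\}$; Proposition~\ref{kv} makes $L^4(\racg_\sK)$ a vector space over $\Z_2$. The proposition reduces to showing $\dim_{\Z_2}L^4(\racg_\sK)=8$.

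\emph{Upper bound: eight commutators suffice.} I would work first within the graph Lie algebra $L_\sK$ of~\cite{ve3}, which for the discrete $\sK$ on three vertices is the free Lie algebra on three generators over $\Z_2$; its degree-four component has dimension $18$ by the Witt formula. The Lie-algebra epimorphism $L_\sK\to L(\racg_\sK)$ constructed in~\cite{ve3} has a nontrivial kernel in degree four generated by the Lie-algebra translates of the Coxeter squaring relations $g_i^2=1$. These translates arise from the basic identity $(g_i,g_j) = (g_ig_j)^2$ combined with Proposition~\ref{kv}: systematic application of the Witt--Hall identities~\eqref{WH} to expressions $(ab)^2$ for $a\in\gamma_j, b\in\gamma_k$ of low degree would yield ten independent $\Z_2$-linear relations among the $18$ Lie monomials of degree four, and the remaining quotient is $8$-dimensional, spanned by an explicit list of eight left-nested commutators.

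\emph{Lower bound: eight commutators are necessary.} I would then exhibit these eight candidate commutators and prove their $\Z_2$-linear independence in $L^4(\racg_\sK)$. Following the approach of~\cite{ve5} in the analogous four-vertex case, the plan is to construct a faithful homomorphism of the nilpotent quotient $\racg_\sK/\gamma_5(\racg_\sK)$ into a computable target (such as a nilpotent matrix group), and verify directly that the eight candidate classes have pairwise distinct, nontrivial images. Theorem~\ref{homrk} applied to the subcomplexes $\sK_J$ of the three-vertex $\sK$ provides an independent cross-check on the rank via the associated moment-angle complex $\rk$.

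\emph{Main obstacle.} The upper bound, though algorithmic, requires careful bookkeeping of all Coxeter-derived Lie identities through Hall--Witt collection. The principal difficulty is the lower bound: the naive Magnus-style embedding $g_i\mapsto 1+x_i$ into $\Z_2\<x_1,x_2,x_3\>/(x_1^2,x_2^2,x_3^2)$ detects only the Jennings--Zassenhaus associated Lie algebra of $\racg_\sK$, which in degree four has strictly smaller dimension than the ordinary $L^4(\racg_\sK)$; a finer representation or invariant must therefore be constructed to separate the eight candidate classes within $\gamma_4/\gamma_5$.
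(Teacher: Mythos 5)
This proposition is not proved in the paper at all: it is quoted from Waldinger \cite{WaldingerLCS}, and the explicit list of eight generators appears only later as Corollary~\ref{cor_freeproduct3}, itself quoted from \cite{ve5}. So the question is whether your outline would constitute an independent proof, and as written it would not. For the upper bound you correctly compute that the degree-four component of the free Lie algebra on three generators over $\Z_2$ has dimension $18$, but the assertion that the Coxeter relations $g_i^2=1$ produce exactly ten independent linear relations in degree four is precisely what has to be proved: the number ten is read off from the answer ($18-8$) rather than derived, no list of relations is exhibited, and the claim that the kernel of the epimorphism $L_\sK\to L(\racg_\sK)$ is generated by translates of the squaring relations is exactly the point the paper flags as unresolved in general (``its kernel was described in several cases''). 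Note that the kernel is already nontrivial in degree three, where $\dim_{\Z_2} L^3(\racg_\sK)=5$ by Theorem~\ref{LRCK} while the free Lie algebra has dimension $8$, so the Hall--Witt bookkeeping you defer is substantial and error-prone.

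For the lower bound you propose ``a faithful homomorphism of $\racg_\sK/\gamma_5(\racg_\sK)$ into a computable target'' but do not construct one. You then observe, correctly and insightfully, that the natural candidate --- the Magnus map $g_i\mapsto 1+v_i$ of Section 3 --- computes a dimension-subgroup-type filtration and therefore need not be injective on $\gamma_4/\gamma_5$; this is consistent with the fact that the paper uses the Magnus map only to certify that individual classes do not lie in $\gamma_5$ (Corollary~\ref{muk}, Propositions~\ref{notgamma5elems} and~\ref{notgamma5elems2}) and relies on Waldinger's count for the actual dimension. But identifying this obstacle is not the same as overcoming it: the linear independence of the eight candidate classes, i.e., the entire lower bound, is left unproven, and the cross-check via Theorem~\ref{homrk} only controls $H_1(\rk)\cong\racg_\sK'/\racg_\sK''$, not the finer quotient $\gamma_4/\gamma_5$. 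In short, both halves of your argument are plans rather than proofs, and the statement is not established by the proposal.
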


\begin{proposition}[{\cite[the statement after (4.22)]{WaldingerLCS}}]\label{numbergens3oneedge}
Let $\sK$be a simplicial complex on three points with
only one edge; i.e., $\racg_\sK = (\mathbb Z_2\langle g_1 \rangle \oplus \mathbb Z_2\langle g_2 \rangle) \ast \mathbb Z_2\langle g_3 \rangle$. Then $L^4(\racg_\sK)$ has a minimal set of four generators.
\end{proposition}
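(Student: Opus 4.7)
The plan is to recognize $L^4(\racg_\sK)$ as a $\mathbb{Z}_2$-vector space (using Proposition~\ref{kv}) generated by classes of length-four simple nested commutators (using Proposition~\ref{comb}), and then squeeze its $\mathbb{Z}_2$-dimension between a combinatorial upper bound of $4$ and a detection-based lower bound of $4$. Throughout I would work in the graded $\mathbb{Z}_2$-Lie algebra $L(\racg_\sK)$, in which skew-symmetry reduces to $[x,y]=[y,x]$ and $[x,x]=0$.

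For the upper bound, I would use that $\racg_\sK$ is the quotient of the three-vertex discrete Coxeter group $\racg_{\mathrm{discr}} = \mathbb{Z}_2 \ast \mathbb{Z}_2 \ast \mathbb{Z}_2$ by the single additional relation $(g_1, g_2) = 1$, which induces a surjection $L^4(\racg_{\mathrm{discr}}) \twoheadrightarrow L^4(\racg_\sK)$. By Proposition~\ref{numbergens3} the source is eight-dimensional, and the kernel is the degree-four part of the Lie ideal of $L(\racg_{\mathrm{discr}})$ generated by $[\bar g_1, \bar g_2] \in L^2(\racg_{\mathrm{discr}})$. Starting from a list of eight generators of $L^4(\racg_{\mathrm{discr}})$ and applying the Jacobi identity together with $[x,y]=[y,x]$, one checks that exactly four of these classes fall in the ideal after the identification $[\bar g_1,\bar g_2]=0$, yielding $\dim L^4(\racg_\sK) \leq 4$.

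For the lower bound I would pick four explicit candidate classes, for instance
\[
[\bar g_1, \bar g_3, \bar g_1, \bar g_3],\quad
[\bar g_2, \bar g_3, \bar g_2, \bar g_3],\quad
[\bar g_1, \bar g_3, \bar g_2, \bar g_3],\quad
[\bar g_1, \bar g_3, \bar g_1, \bar g_2],
\]
and prove $\mathbb{Z}_2$-linear independence by constructing enough $\mathbb{Z}_2$-valued detectors. A first batch comes from the retractions $\racg_\sK \twoheadrightarrow D_\infty$ obtained by sending one of $g_1, g_2$ to the other and fixing $g_3$; combined with the complete description $L^k(D_\infty) \cong \mathbb{Z}_2$ for $k \geq 2$ from Proposition~4.4 of \cite{ve3}, they pin down a rank-two subquotient of the span of the four candidates.

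The main obstacle is detecting the two remaining ``mixed'' classes that involve both $g_1$ and $g_2$ essentially and are therefore annihilated by every retraction $\racg_\sK \to D_\infty$. For these I would use a finer invariant: either a Magnus-type expansion of $\racg_\sK$ into the power-series ring $\mathbb{Z}_2\langle\!\langle x_1, x_2, x_3\rangle\!\rangle/(x_1 x_2 - x_2 x_1)$, reading off the coefficients of the degree-four monomials orthogonal to what the retractions already see, or an auxiliary finite nilpotent quotient of $\racg_\sK$ in which the length-three commutator $(g_1,g_3,g_2)$ of Theorem~\ref{gscox} becomes visible and can be bracketed once more with a generator. This last step is the delicate part, and it is precisely the content of the explicit enumeration carried out by Waldinger in~\cite{WaldingerLCS}.
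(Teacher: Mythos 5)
The paper does not actually prove Proposition~\ref{numbergens3oneedge}: it is imported verbatim from Waldinger~\cite{WaldingerLCS} (and the sharper statement with an explicit basis is quoted as Theorem~\ref{commcox3}(c) from~\cite{ve5}), so there is no in-paper argument to compare with and your sketch has to stand on its own. Its decisive gap is in the lower bound. You correctly identify that retractions onto $D_\infty$ cannot separate the two ``mixed'' classes --- any homomorphism to $D_\infty$ either kills one of $g_1,g_2$, annihilating both classes, or identifies them, sending both to the single generator of $L^4(D_\infty)\cong\Z_2$ --- but the step that would actually separate them is left as ``a Magnus-type expansion \dots\ or an auxiliary nilpotent quotient \dots\ precisely the content of the explicit enumeration carried out by Waldinger.'' Since the proposition being proved \emph{is} Waldinger's count, deferring the one irreducible step back to Waldinger makes the argument circular. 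To close it you would have to compute something: the paper's own Section~3 machinery (the Magnus map $\mu$ into $U^{\infty}_{\RCK}$ and Corollary~\ref{muk}, used in exactly this way in Propositions~\ref{notgamma5elems} and~\ref{notgamma5elems2}) is the right detector, but you would need to exhibit the degree-4 components of $\mu$ on four commutators and verify $\Z_2$-independence of the resulting coefficient vectors. You would also need to check that your candidates are a legitimate choice: e.g.\ by Jacobi and $[\bar g_1,\bar g_2]=0$ one has $[\bar g_1,\bar g_3,\bar g_1,\bar g_2]=[\bar g_3,\bar g_2,\bar g_1,\bar g_1]$, whose independence from the other three is not evident; the basis of Theorem~\ref{commcox3}(c) is a safer starting point.

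The upper bound has a smaller but real defect. The kernel of $L^4(\racg_{\mathrm{discr}})\to L^4(\racg_\sK)$ is only known a priori to \emph{contain} the degree-4 part of the Lie ideal generated by $[\bar g_1,\bar g_2]$, not to equal it; the containment suffices for an upper bound, but then the claim that ``exactly four of these classes fall in the ideal'' is the entire content of that half and is not carried out. Moreover the reduction is not a matter of four of the eight generators simply vanishing: the Jacobi identity mostly produces identifications among nonzero survivors (for instance $[\mu_3,\mu_1,\mu_2,\mu_1]\equiv[\mu_3,\mu_2,\mu_1,\mu_1]$ modulo the ideal), so the count of the quotient's dimension requires an actual linear-algebra computation in the eight-dimensional space of Corollary~\ref{cor_freeproduct3}. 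As written, both halves of your squeeze are statements of intent rather than proofs, though the overall strategy (upper bound from the discrete case plus relations, lower bound from retractions plus a Magnus expansion) is sound and is consistent with how the paper handles the analogous four-point computations.
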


\begin{proposition}[{\cite[the statement after (4.22)]{WaldingerLCS}}]\label{numbergens4onetriangle}
Let $\sK$ be the simplicial complex on four points
deﬁned as the disjoint union of a point and the boundary of a triangle; i.e., $$\racg_\sK = (\mathbb Z_2\langle g_1 \rangle \oplus \mathbb Z_2\langle g_2 \rangle \oplus \mathbb Z_2 \langle g_3 \rangle) \ast \mathbb Z_2\langle g_4 \rangle.$$ Then $L^4(\racg_\sK)$ has a minimal set of ten generators.
\end{proposition}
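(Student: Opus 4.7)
My plan is to determine $\dim_{\Z_2} L^4(\racg_\sK)$; this is well-defined since $L^4(\racg_\sK)$ is a $\Z_2$-vector space by Proposition~\ref{kv}. I will exhibit a generating set of cardinality $10$ and then verify its linear independence.

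By Proposition~\ref{comb}, $L^4(\racg_\sK)$ is spanned by images of simple nested commutators of length $4$ in $g_1, g_2, g_3, g_4$. The Coxeter commutation relations $[\mu_i, \mu_j] = 0$ for $i, j \in \{1, 2, 3\}$ kill any commutator whose innermost bracket lies inside the triangle $\{g_1, g_2, g_3\}$, so one may assume the innermost bracket contains $\mu_4$. For commutators of the form $[[[\mu_b, \mu_4], \mu_c], \mu_d]$ with $b, c, d \in \{1, 2, 3\}$, Jacobi combined with $[\mu_c, \mu_d] = 0$ yields invariance under permutations of $\{b, c, d\}$, producing $10$ candidates indexed by multisets of size $3$ drawn from $\{1, 2, 3\}$. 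Commutators with two or three $\mu_4$'s (such as $[[[\mu_b, \mu_4], \mu_4], \mu_d]$) are non-trivial in the graph Lie algebra $L_\sK$, but the additional Lie-algebra identities arising from $g_i^2 = 1$, via the Witt--Hall identities~\eqref{WH} and Proposition~\ref{kv}, should reduce them to combinations of the $10$ basic ones.

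For linear independence I would compute the Hilbert series of $L(\racg_\sK)$ in low degrees via a Magnus-type embedding of $\racg_\sK$ into a $\Z_2$-algebra, in the spirit of~\cite{WaldingerLCS}, and check that $\dim_{\Z_2} L^4(\racg_\sK) = 10$. Matching this dimension count against the $10$-element generating set constructed above completes the argument.

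The main obstacle is the analysis of commutators involving multiple $\mu_4$'s together with the enumeration of the Lie-algebra identities induced by $g_i^2 = 1$. These identities are not captured by the Coxeter commutation relations alone and generate a subtle web of dependencies among nested commutators in degree $4$ (already visible in the group $\Z_2 * \Z_2$, where $[[\mu_1, \mu_2], \mu_1] = [[\mu_1, \mu_2], \mu_2]$). Correctly identifying all such dependencies, and thereby reducing the graph-Lie-algebra dimension (which is $22$ in degree $4$) down to exactly $10$, is the technical core of the proof.
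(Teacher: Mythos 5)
First, a structural point: the paper does not prove this proposition at all --- it is imported verbatim from Waldinger \cite{WaldingerLCS} (``the statement after (4.22)'') and is then used in the proof of Proposition~\ref{coxeter4pointsgens}(d) precisely to certify that the explicit ten-element set constructed there is minimal. So there is no proof in the paper to compare yours against; you are attempting to reprove the cited input, which is legitimate but means the burden of proof is entirely on your argument.

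As it stands, your text is a plan with both decisive steps missing, and you flag this yourself. The part you do establish is sound: the $S_3$-symmetry of $[[[\mu_b,\mu_4],\mu_c],\mu_d]$ for $b,c,d$ in the triangle follows from two Jacobi identities each using a vanishing bracket inside the triangle, and the count of ten multisets is correct. But the reduction of commutators in which $\mu_4$ occurs twice or three times is only asserted with ``should,'' and this is not a minor omission: in the minimal set the paper exhibits for exactly this case (Proposition~\ref{coxeter4pointsgens}(d), second bullet, with $i$ the isolated vertex) seven of the ten generators contain the isolated vertex at least twice, and trading, say, $[[[\mu_a,\mu_4],\mu_b],\mu_4]$ for $[[[\mu_a,\mu_4],\mu_4],\mu_b]$ produces via Jacobi a cross term $[[\mu_b,\mu_4],[\mu_a,\mu_4]]$ that does not vanish, since $\{a,4\}$ and $\{b,4\}$ are non-edges; the identities that eventually kill such terms are exactly the non-graph-Lie-algebra relations coming from $g_i^2=1$, which you enumerate nowhere. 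A workable route is to observe that each problematic commutator lies in the full subcomplex on its (at most three-element) support and to invoke Theorem~\ref{commcox3}(c) there, but you would then still have to check that your four candidates on each triple $\{a,b,4\}$ span that copy of $\mathbb Z_2^4$. Likewise, the independence step (``I would compute the Hilbert series \dots via a Magnus-type embedding'') is precisely what Section~3 of the paper sets up through Corollary~\ref{muk}, but you do not carry out the computation, so the lower bound of $10$ is not established. Until both steps are done, the statement is not proved.
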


\begin{proposition}[{\cite[the statement after (4.22)]{WaldingerLCS}}]\label{numbergens4twoedges}
Let $\sK$be the simplicial complex on four points
deﬁned as the disjoint union of two segments; i.e., $\racg_\sK = (\mathbb Z_2\langle g_1 \rangle \oplus \mathbb Z_2\langle g_2 \rangle) \ast (\mathbb Z_2 \langle g_3 \rangle \oplus \mathbb Z_2\langle g_4 \rangle)$. Then $L^4(\racg_\sK)$ has a minimal set of $15$ generators.
\end{proposition}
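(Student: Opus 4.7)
The plan is to compute $L^4(\racg_\sK)$ as a $\Z_2$-vector space---which it is, by Proposition~\ref{kv}---and to show that its dimension equals $15$; the number of elements in a minimal generating set then coincides with this dimension. By Proposition~\ref{comb}, the space $L^4(\racg_\sK)$ is spanned by the classes of length-$4$ simple nested commutators $[\bar g_{i_1},\bar g_{i_2},\bar g_{i_3},\bar g_{i_4}]$ with $i_j\in\{1,2,3,4\}$.

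For the upper bound I would exhibit a concrete spanning set of size $15$. The defining relations $[g_1,g_2]=0=[g_3,g_4]$ in $L(\racg_\sK)$ force the innermost bracket of any nonzero length-$4$ simple commutator to be one of the four cross-commutators $\alpha_{ij}=[g_i,g_j]$ with $i\in\{1,2\}$ and $j\in\{3,4\}$. Over $\Z_2$ the Jacobi identity takes the form $[[x,y],z]=[x,[y,z]]+[y,[x,z]]$, and its systematic application together with the Witt--Hall identities~\eqref{WH} (read modulo $\gamma_5(\racg_\sK)$) then reduces every length-$4$ simple nested commutator to a $\Z_2$-linear combination of an explicit list of $15$ candidates $[\alpha_{ij},g_k,g_\ell]$, normalized by the requirement that the inner triple bracket already be in canonical form.

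The harder half is the lower bound: the linear independence of these $15$ classes in $L^4(\racg_\sK)$. The epimorphism $L_\sK\twoheadrightarrow L(\racg_\sK)$ from~\cite{ve3}, with $L_\sK$ the graph Lie algebra over $\Z_2$, unfortunately bounds $\dim_{\Z_2}L^4(\racg_\sK)$ in the wrong direction; to produce a lower bound I would work directly with the LCS of $\racg_\sK$, using the topological identification $\racg_\sK'\cong\pi_1(\rk)$ from Theorem~\ref{coxfund}(d), the explicit minimal generating set of $\racg_\sK'$ from Theorem~\ref{gscox}, and the rank computation of $H_1(\rk)$ from Corollary~\ref{h1rk} as anchor data in the lowest degrees, and then propagating nonvanishing up the LCS via the Witt--Hall identities~\eqref{WH}. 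The main obstacle, and the combinatorial heart of the proof, is to verify that among the $15$ candidates no additional ``secondary'' relation---forced by the squaring relations $g_i^2=1$ via Proposition~\ref{kv}, and propagated at each depth of the LCS by Witt--Hall expansion---causes a further collapse. One concrete way to rule this out is to map $\racg_\sK/\gamma_5$ into an explicit nilpotent group of class $4$ in which each of the $15$ candidate commutators is visibly nontrivial; constructing such a target faithfully on all $15$ classes simultaneously is, I expect, the most delicate step.
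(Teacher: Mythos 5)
There is a genuine gap here, and it is worth being precise about what the paper actually does with this statement: it does not prove it at all. Proposition~\ref{numbergens4twoedges} is imported verbatim from Waldinger's computation in~\cite{WaldingerLCS}, and the paper later uses the number $15$ only as an external certificate of minimality for the explicit generating set it constructs in item~(e) of Proposition~\ref{coxeter4pointsgens}. Your proposal, by contrast, is a plan to prove the count from scratch, and neither half of that plan is carried out. For the upper bound, the reduction of all length-$4$ left-normed commutators to ``an explicit list of $15$ candidates'' is asserted but never performed; the entire content of the statement is that the number is exactly $15$ rather than $14$ or $16$, so a sketch that says the Jacobi and Witt--Hall identities ``reduce every commutator to $15$ candidates'' without exhibiting the list or the reduction has not yet engaged with the claim.

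The lower bound is where the proposal fails as a proof rather than as an outline. You correctly identify that one needs a faithful target --- a nilpotent object of class $4$ in which all $15$ classes are visibly independent --- and then explicitly defer its construction as ``the most delicate step.'' That construction is the proof; without it there is no lower bound. (The intermediate suggestion of ``propagating nonvanishing up the LCS via the Witt--Hall identities'' starting from Theorem~\ref{gscox} and Corollary~\ref{h1rk} does not supply a mechanism: the Witt--Hall identities produce relations, hence only upper bounds, and knowing a basis of $\racg'_\sK/\racg''_\sK$ says nothing a priori about the rank of $\gamma_4/\gamma_5$.) The object you are looking for is exactly what the paper builds in Section~3: the Magnus mapping $g_i\mapsto 1+v_i$ into $U^{\infty}_{\RCK}$, with Corollary~\ref{muk} converting a nonzero degree-$4$ component of $\mu(x)$ into the statement $x\notin\gamma_5(\racg_\sK)$, and Propositions~\ref{notgamma5elems} and~\ref{notgamma5elems2} carrying out the resulting monomial-cancellation arguments. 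Even there, the paper only certifies individual classes nonzero (and distinguishes the handful of ``four distinct indices'' commutators from one another); it never establishes the full independence of $15$ classes by this method, which is why it falls back on citing Waldinger. To complete your program you would need to compute $\mu$ on all $15$ candidates and verify linear independence of their degree-$4$ components in $U^{\infty,4}_{\RCK}$ --- a finite but nontrivial computation that is absent from the proposal.
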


\begin{proposition}[{\cite[the statement after (4.22)]{WaldingerLCS}}]\label{numbergens4oneedge}
Let $\sK$be the simplicial complex on four points
deﬁned as the disjoint union of two points and a segment; i.e., $$\racg_\sK = (\mathbb Z_2\langle g_1 \rangle \oplus \mathbb Z_2\langle g_2 \rangle) \ast \mathbb Z_2 \langle g_3 \rangle \ast \mathbb Z_2\langle g_4 \rangle.$$ Then $L^4(\racg_\sK)$ has a minimal set of $23$ generators.
\end{proposition}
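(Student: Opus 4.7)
The plan is to prove $\dim_{\mathbb F_2}L^4(\racg_\sK)=23$ by explicitly exhibiting $23$ simple nested commutators of length $4$ in $g_1,g_2,g_3,g_4$ and verifying that their classes form an $\mathbb F_2$-basis of $L^4(\racg_\sK)$. By Proposition~\ref{kv}, the square of every element of $\gamma_k(\racg_\sK)$ lies in $\gamma_{k+1}(\racg_\sK)$, so each $L^k(\racg_\sK)$ is a vector space over $\mathbb F_2$; hence a minimal generating set of $L^4(\racg_\sK)$ as an abelian group has cardinality exactly $\dim_{\mathbb F_2}L^4(\racg_\sK)$, and the proposition follows from the dimension count.

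\emph{Spanning.} By Proposition~\ref{comb}, $L^4(\racg_\sK)$ is spanned as an $\mathbb F_2$-vector space by the classes of all simple nested commutators $(g_{i_1},g_{i_2},g_{i_3},g_{i_4})$ with $i_s\in\{1,2,3,4\}$. The only commutation relation available is $(g_1,g_2)=1$, coming from the single edge $\{1,2\}\in\sK$. Modulo $\gamma_5$, I would combine this relation with the Witt--Hall identities~\eqref{WH} and the Jacobi identity to rewrite every length-$4$ commutator in a chosen normal form, e.g.\ left-nested with the outer index larger than the inner ones in the spirit of Theorem~\ref{gscox}, and organise the reduction by the multiset of indices appearing in $(i_1,i_2,i_3,i_4)$ together with the position of the edge $\{1,2\}$ inside the bracket. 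An explicit enumeration (feasible by hand for $m=4$) should collapse the list to $23$ canonical representatives.

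\emph{Linear independence.} To show that these $23$ classes are independent I would invoke the epimorphism of graded Lie algebras $L_\sK\twoheadrightarrow L(\racg_\sK)$ from~\cite{ve3}, where $L_\sK$ is the graph Lie algebra over $\mathbb F_2$ on generators $\mu_1,\mu_2,\mu_3,\mu_4$ with the single defining relation $[\mu_1,\mu_2]=0$. I would compute $\dim_{\mathbb F_2}L^4_\sK$ directly and analyse the kernel of the epimorphism in degree~$4$ by hand, then match the result against the $23$ candidates. As an independent cross-check, Theorems~\ref{coxfund} and~\ref{homrk} give the integer ranks of $H_*(\rk)$ in terms of $\widetilde H_*(\sK_J)$ over all $16$ subsets $J\subseteq[4]$; since $\sK$ is flag, $\rk$ is aspherical and these ranks, fed into the lower-central-series filtration on the classifying space, yield an independent lower bound on $\dim L^4(\racg_\sK)$.

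\emph{Main obstacle.} The delicate point is the tightness of the spanning step: in characteristic~$2$, the Jacobi identity combined with $g_i^2=1$ easily produces hidden linear dependences among nested commutators, and a naive enumeration typically leaves more than $23$ candidates. Making the reduction sharp therefore requires a careful, algorithmic pass through all relations of depth~$\leq4$. On the lower-bound side, by \cite[Example 4.3]{ve3} the epimorphism $L_\sK\to L(\racg_\sK)$ need not be injective, so one must verify that in the present example the kernel in degree~$4$ accounts for exactly the expected number of additional relations; the homological computation via $H_*(\rk)$ is what ultimately pins the count down to $23$.
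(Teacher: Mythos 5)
The paper does not prove this proposition at all: it is imported verbatim from Waldinger's computation of the lower central series of free products of direct sums of copies of $\mathbb Z_2$ (the statement after (4.22) in \cite{WaldingerLCS}), and is then used in item (f) of Proposition~\ref{coxeter4pointsgens} only to certify minimality of an explicitly constructed $23$-element set. So any complete argument you give is necessarily a different route. Your reduction of ``minimal generating set of size $23$'' to ``$\dim_{\mathbb F_2}L^4(\racg_\sK)=23$'' via Proposition~\ref{kv} is correct, and the spanning half of your plan (enumerate length-$4$ simple nested commutators by Proposition~\ref{comb} and reduce modulo Jacobi, Witt--Hall and the single relation $(g_1,g_2)=1$) is in the spirit of what the paper actually carries out in Proposition~\ref{coxeter4pointsgens}(f); but you have not performed the enumeration, and you yourself flag that a naive pass leaves more than $23$ candidates, so even the upper bound is not yet established.

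The genuine gap is in the lower bound. Your first route --- ``analyse the kernel of the epimorphism $L_\sK\twoheadrightarrow L(\racg_\sK)$ in degree $4$ by hand'' --- is circular: that kernel is exactly the unknown quantity (the paper stresses the map is not injective in general and that the kernel was described in \cite{ve3} only in special cases), so computing it in degree $4$ is the content of the proposition, not a tool for proving it. Your second route via Theorems~\ref{coxfund} and~\ref{homrk} does not work as stated: for this $\sK$ the graph is chordal, $\racg'_\sK$ is free, and Theorem~\ref{homrk} only recovers $H_1(\rk)=\racg'_\sK/\racg''_\sK$ (Corollary~\ref{h1rk}); there is no ``lower-central-series filtration on the classifying space'' argument in the paper, and none that I can see, that converts these Betti numbers into $\dim L^4(\racg_\sK)$, since $\gamma_4/\gamma_5$ sits far below the abelianization of $\racg'_\sK$. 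The mechanism the paper actually uses to certify that specific commutators survive into $L^4$ is the Magnus embedding $\mu\colon\RCK\to U^{\infty}_{\RCK}$ of Section~3: one exhibits a monomial in the degree-$4$ component $\mu(x)^4$ that cannot cancel, whence $x\notin\gamma_5(\racg_\sK)$ by Corollary~\ref{muk} (see Propositions~\ref{notgamma5elems} and~\ref{notgamma5elems2}). Without that device, or the citation to Waldinger, your independence step has no engine.
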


\begin{proposition}[{\cite[the statement after (4.22)]{WaldingerLCS}}]\label{numbergens4}
Let $\sK$ be the discrete simplicial complex on four points; i.e., $\racg_\sK = \mathbb Z_2\langle g_1 \rangle \ast \mathbb Z_2\langle g_2 \rangle \ast \mathbb Z_2\langle g_3 \rangle \ast \mathbb Z_2\langle g_4 \rangle$. Then $L^4(\racg_\sK)$ has a minimal generating set of $32$ generators.
\end{proposition}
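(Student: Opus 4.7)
The plan is to combine a combinatorial reduction argument with a Magnus-type lower bound. By Proposition~\ref{comb}, $L^4(\racg_\sK)$ is generated as an abelian group by the classes $[\overline{g_{i_1}}, \overline{g_{i_2}}, \overline{g_{i_3}}, \overline{g_{i_4}}]$ with $i_j \in \{1,2,3,4\}$, and by Proposition~\ref{kv} $L^4$ is a $\Z_2$-vector space. So it suffices to show $\dim_{\Z_2} L^4(\racg_\sK) = 32$.

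For the upper bound I would produce a spanning set of size $32$. Starting from the $4^4 = 256$ length-$4$ simple nested commutators, I reduce first by antisymmetry and $[\mu,\mu]=0$, then by the Jacobi identity; these identities alone cut the spanning set down to size $\tfrac{1}{4}(4^4-4^2)=60$, matching the degree-$4$ rank of the free $\Z_2$-Lie algebra on four generators. The further reduction is supplied by Coxeter-specific relations, obtained by applying the Witt--Hall identities \eqref{WH} to expansions of $(g_i^2, g_j, g_k, g_l)$ and its variants and collecting the resulting words lying in $\gamma_4$ modulo $\gamma_5$. A convenient normal form, inspired by Theorem~\ref{gscox}, keeps commutators $[\overline{g_i}, \overline{g_j}, \overline{g_{k_1}}, \overline{g_{k_2}}]$ with $i<j$ and further ordering constraints on $k_1,k_2$.

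For the matching lower bound I would use a Magnus-type expansion
\[
\mu\colon \racg_\sK \longrightarrow \Z_2 \langle\!\langle X_1, X_2, X_3, X_4 \rangle\!\rangle \big/ (X_i^2),\qquad g_i \mapsto 1 + X_i,
\]
which is well-defined because $(1+X_i)^2 = 1 + X_i^2 \equiv 1$ in the target. This expansion sends $\gamma_k(\racg_\sK)$ into the $k$-th power of the augmentation ideal, so it induces a map of graded Lie rings from $L(\racg_\sK)$ to the associated graded of the target algebra. In degree $4$ the target has dimension $4 \cdot 3^3 = 108$ over $\Z_2$ (spanned by length-$4$ monomials with no two adjacent letters equal), which is ample room to check that the images of the $32$ proposed generators are linearly independent.

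The main obstacle is the upper bound: verifying that the Coxeter relations kill \emph{exactly} $28$ of the $60$ free-Lie-algebra classes, neither more nor fewer. Because $L(\racg_\sK)$ is neither the free $\Z_2$-Lie algebra nor the restricted Lie algebra associated to the mod-$2$ group ring of $\racg_\sK$ (cf.~\cite{ve3}), no general structural theorem dictates these relations; they must be produced by hand via repeated Witt--Hall expansions combined with $g_i^2=1$, then sorted to the chosen normal form modulo Jacobi. The lower-bound step, by contrast, is a mechanical but laborious verification inside the Magnus target.
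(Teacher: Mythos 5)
The paper does not actually prove this proposition: it is imported verbatim from Waldinger \cite{WaldingerLCS} (the statement after his (4.22)), and the explicit $32$-element generating set is quoted from \cite[Theorem~3.11]{ve5} as Theorem~\ref{theorem_freeproduct4}. So there is no internal proof to compare against; what you have written is a strategy that would, if executed, supply an independent proof. The strategy is the natural one and matches the machinery the paper does develop: your target algebra $\Z_2\langle\!\langle X_1,\dots,X_4\rangle\!\rangle/(X_i^2)$ is exactly the Magnus algebra $U^{\infty}_{\RCK}$ of Section~3 specialized to discrete $\sK$, your reduction of the claim to computing $\dim_{\Z_2}L^4(\racg_\sK)$ is justified by Proposition~\ref{kv}, and the upper bound of $60$ coming from the degree-$4$ component of the free Lie ring is correct.

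As written, however, the proposal has a genuine gap on both sides of the count. For the upper bound you never exhibit the $28$ Coxeter-specific relations (nor the $32$ surviving normal forms); you only indicate where they should come from, and you yourself identify this as the main obstacle. For the lower bound you never list the $32$ candidate generators or compute their expansions, and, more importantly, you silently assume that the induced $\Z_2$-linear map $L^4(\racg_\sK)\to U^{\infty,4}_{\RCK}$ separates them. That is not automatic: the inclusion $\gamma_4(\racg_\sK)\subset D_4$ may be strict, so the induced map on $\gamma_4/\gamma_5$ can have a kernel, and even if $\dim L^4=32$ the Magnus images of your candidates could be linearly dependent, in which case this route gives a lower bound strictly below $32$ and a finer invariant would be needed. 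Note that the paper only ever uses the Magnus expansion to certify that individual commutators lie outside $\gamma_5$ (Corollary~\ref{muk}, Propositions~\ref{notgamma5elems} and~\ref{notgamma5elems2}), never to certify independence of a full $32$-element family. Until both computations are actually carried out (or the count is simply taken from \cite{WaldingerLCS}, as the authors do), the statement is not proved.
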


\begin{theorem}[{\cite[Theorem 4.5]{ve3}}]\label{LRCK}
Let $\sK$ be a simplicial complex on $[m]$, let $\racg_\sK$ be the right-angled Coxeter group corresponding to $\sK$, and let $L(\racg_\sK)$ be the associated Lie algebra. Then
\begin{itemize}
\item[(a)] $L^1(\racg_\sK)$ has the basis $\mu_1, \ldots, \mu_m$;
\item[(b)] $L^2(\racg_\sK)$ has the basis consisting of the commutators $[\mu_i, \mu_j]$, where $i < j$ and ${\{i, j\} \notin \sK}$;
\item[(c)] $L^3(\racg_\sK)$ has the basis consisting of
\begin{itemize}
\item the commutators $[\mu_i, \mu_j, \mu_j]$ with $i < j$ and $\{i, j\} \notin \sK$;
\item the commutators $[\mu_i, \mu_j, \mu_k]$, where $i < j > k, i \neq k$ and $i$ is the least vertex in a connected component of $\sK_{\{i,j,k\}}$ not containing $j$.
\end{itemize}
\end{itemize}
\end{theorem}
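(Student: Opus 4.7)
The plan is to prove parts (a)--(c) by increasing degree, each time following the same template: produce a spanning set via Proposition~\ref{comb}, reduce it using the Coxeter relations, antisymmetry, and the $2$-torsion from Proposition~\ref{kv}, and finally establish linear independence.

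Part (a) is immediate: $L^1(\racg_\sK)=\racg_\sK/\racg'_\sK$ is the abelianisation of $\racg_\sK$, in which all commutators become trivial, so $L^1(\racg_\sK)\cong\mathbb Z_2^m$ with basis $\mu_1,\dots,\mu_m$. For part (b), Proposition~\ref{comb} spans $L^2$ by the brackets $[\mu_i,\mu_j]$; the Coxeter relations kill $[\mu_i,\mu_j]$ for $\{i,j\}\in\sK$, and antisymmetry combined with the $2$-torsion from Proposition~\ref{kv} (so $[\mu_j,\mu_i]=[\mu_i,\mu_j]$) restricts us to $i<j$. For independence I would invoke the graph-Lie-algebra epimorphism $L_\sK\twoheadrightarrow L(\racg_\sK)$ constructed in~\cite{ve3}: in $L_\sK^2$ over $\mathbb Z_2$ the brackets $[\mu_i,\mu_j]$ with $i<j$, $\{i,j\}\notin\sK$ are manifestly independent, and any additional relation in $L(\racg_\sK)$ would have to come from $g_i^2=1$. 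Expanding this identity via Witt--Hall modulo $\gamma_3(\racg_\sK)$ only yields $2[\mu_i,\mu_j]=0$, which is already implied by Proposition~\ref{kv}, so the epimorphism is an isomorphism in degree~$2$.

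For part (c) the generation step is subtler. After the reductions of (b), I may assume the innermost bracket is $[\mu_i,\mu_j]$ with $i<j$, $\{i,j\}\notin\sK$. When the outer index $k$ lies outside $\{i,j\}$, I would iteratively apply the Jacobi identity
\[
[\mu_i,\mu_j,\mu_k]+[\mu_j,\mu_k,\mu_i]+[\mu_k,\mu_i,\mu_j]=0,
\]
zeroing out any triple whose innermost pair is an edge of $\sK$; this brings $[\mu_i,\mu_j,\mu_k]$ into the normal form where $i$ is the least vertex of some component of $\sK_{\{i,j,k\}}$ not containing $j$, mirroring the argument behind Theorem~\ref{gscox}. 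When $k\in\{i,j\}$, expanding the identities $(g_i^2,g_j)=1$ and $(g_i,g_j^2)=1$ via Witt--Hall and reducing modulo $\gamma_4(\racg_\sK)$ gives
\[
(g_i,g_j)^2\cdot(g_i,g_j,g_i)\equiv 1\quad\text{and}\quad (g_i,g_j)^2\cdot(g_i,g_j,g_j)\equiv 1\pmod{\gamma_4(\racg_\sK)},
\]
whence $[\mu_i,\mu_j,\mu_i]=[\mu_i,\mu_j,\mu_j]$ in $L^3(\racg_\sK)$. This collapses the $k=i$ case to the $k=j$ case and justifies retaining only $[\mu_i,\mu_j,\mu_j]$ in the basis.

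For independence in degree~$3$, I would again push through $L_\sK\twoheadrightarrow L(\racg_\sK)$: identify the kernel in degree~$3$ with the $\mathbb Z_2$-ideal generated by the squaring relations $[\mu_i,\mu_j,\mu_i]=[\mu_i,\mu_j,\mu_j]$ (one per non-edge of $\sK$), and match the resulting dimension against the number of elements in the proposed basis. The main obstacle I foresee is ruling out hidden degree-$3$ relations beyond these squaring relations. I would address this by restriction: the retraction $\racg_\sK\twoheadrightarrow\racg_{\sK_J}$ sending generators outside $J$ to the identity induces a compatible map on associated Lie algebras, so any relation supported on a subset $J$ of at most three vertices can be checked inside $L^3(\racg_{\sK_J})$, where the structure is known by direct computation (and cross-checked with the $m=2$ case in \cite[Proposition~4.4]{ve3}).
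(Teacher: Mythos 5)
First, a point of reference: the paper does not prove Theorem~\ref{LRCK} at all --- it is imported verbatim from \cite[Theorem~4.5]{ve3} --- so there is no in-paper proof to compare yours against; I can only assess your argument on its own terms. Your generation arguments are essentially sound: part (a) is indeed immediate, the reduction in degree $2$ via antisymmetry plus Proposition~\ref{kv} is correct, and your derivation of $[\mu_i,\mu_j,\mu_i]=[\mu_i,\mu_j,\mu_j]$ from $(g_i^2,g_j)=(g_i,g_j^2)=e$ via Witt--Hall is exactly the key ``squaring'' relation that distinguishes $L(\racg_\sK)$ from the graph Lie algebra $L_\sK$ over $\mathbb Z_2$. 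The Jacobi reduction to the normal form of Theorem~\ref{gscox} for triples of distinct indices is plausible, though only sketched.

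The genuine gap is in linear independence. Your primary argument --- that the kernel of $L_\sK\twoheadrightarrow L(\racg_\sK)$ in degrees $2$ and $3$ consists only of relations ``coming from $g_i^2=1$'' expanded via Witt--Hall --- is circular: determining that kernel is precisely the hard content of the theorem. The associated graded Lie algebra of a quotient group is not the quotient of the associated graded by the images of the defining relators, and the paper's introduction explicitly records that this kernel was described in \cite{ve3} only in several cases and that $L(\racg_\sK)\not\cong L_\sK$ in general. Your fallback --- restricting along the retractions $\racg_\sK\twoheadrightarrow\racg_{\sK_J}$ to localize any putative relation to full subcomplexes on at most three vertices --- is the right structural idea (every degree-$3$ bracket is supported on at most three indices, so the family of restrictions is jointly faithful on the proposed spanning set), but it only reduces the problem to the base case $m\le 3$, and there you defer to ``direct computation.'' That base case is not a finite check: showing that, say, the two brackets $[\mu_1,\mu_3,\mu_2]$ and $[\mu_2,\mu_3,\mu_1]$ are independent in $L^3(\mathbb Z_2\ast\mathbb Z_2\ast\mathbb Z_2)$ requires a lower bound, i.e., an invariant separating them --- for instance the Magnus mapping of Section~3 and Corollary~\ref{muk}, or the freeness of $\racg'_\sK$ together with the explicit basis of Theorem~\ref{gscox} and Corollary~\ref{h1rk}. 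Until that lower bound is supplied, the independence half of parts (b) and (c) is unproved.
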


We also have the following relations.

\begin{lemma}[{\cite[Lemma 3.3]{ve5}}]\label{comtogPV}
For any group $G$ and any $a, b, c \in G$,
\begin{align}
(a,(b,c))&=(a,c)(c,(b,a))(a,b)(c,b)(b,(a,c))(c,a)(b,a)(b,c),\\
((a,b),c)&=(b,a)(c,a)(c,b)((c,b),a)(a,b)(a,c)((a,c),b)(b,c).
\end{align}
\end{lemma}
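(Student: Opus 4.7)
The plan is to derive the second identity directly from the Hall--Witt identity (the third equation in~(WH)), and then obtain the first identity as a formal consequence of the second.

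Starting from
\[
(a,b,c)(b,c,a)(c,a,b) = (b,a)(c,a)(c,b)^a(a,b)(a,c)^b(b,c)^a(a,c)(c,a)^b,
\]
I would expand every conjugate on the right using the elementary identity $x^y=x\cdot(x,y)$. This rewrites the right-hand side as a product of twelve factors involving only ordinary commutators and triple commutators $((\,\cdot\,,\,\cdot\,),\,\cdot\,)$, with the four extra factors
\[
((c,b),a),\ ((a,c),b),\ ((b,c),a),\ ((c,a),b)
\]
inserted immediately after their ``base'' commutators.

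Next I would isolate $((a,b),c) = (a,b,c)$ by multiplying both sides on the right by $((c,a),b)^{-1}((b,c),a)^{-1}$. The trailing $((c,a),b)$ obtained from expanding $(c,a)^b$ then cancels directly. The remaining tail of the expression contains the consecutive subword
\[
((b,c),a)\,(a,c)\,(c,a)\,((b,c),a)^{-1},
\]
and the crucial observation is that $(a,c)(c,a)=1$, since $(c,a)=(a,c)^{-1}$. After this cancellation the whole tail collapses, and exactly the eight factors $(b,a)(c,a)(c,b)((c,b),a)(a,b)(a,c)((a,c),b)(b,c)$ of the stated second identity survive.

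For the first identity I would apply the cyclic substitution $a\mapsto b,\ b\mapsto c,\ c\mapsto a$ to the second identity, obtaining an explicit expression for $((b,c),a)$. Taking the inverse of both sides reverses the order of the factors and flips each commutator via $(x,y)^{-1}=(y,x)$; in particular $((b,c),a)^{-1}=(a,(b,c))$, $((b,a),c)^{-1}=(c,(b,a))$, and $((a,c),b)^{-1}=(b,(a,c))$. A straightforward bookkeeping check then matches the result with the eight-term right-hand side of the first identity. The main obstacle is purely combinatorial—keeping careful track of the order of the many factors during the expansion and the cancellation—but no further group-theoretic input beyond the Witt--Hall identities is needed.
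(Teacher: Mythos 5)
Your derivation is correct. I checked both steps: expanding the four conjugates in the Witt--Hall identity via $x^y=x\,(x,y)$, right-multiplying by $((c,a),b)^{-1}((b,c),a)^{-1}$, and using $(a,c)(c,a)=e$ does collapse the tail to exactly the eight factors $(b,a)(c,a)(c,b)((c,b),a)(a,b)(a,c)((a,c),b)(b,c)$; and the cyclic substitution followed by inversion (with $(x,y)^{-1}=(y,x)$ applied factorwise in reverse order) reproduces the first identity verbatim. Note that the present paper does not prove this lemma at all --- it is imported from \cite[Lemma~3.3]{ve5} --- so there is no in-text proof to compare against; your argument is a clean, self-contained justification that uses nothing beyond the Witt--Hall identities already recorded in~\eqref{WH}. (Such identities can of course also be verified by brute-force expansion of both sides into words in $a,b,c$, which is the most common route for statements of this type; your reduction to Witt--Hall is tidier and makes the cancellations transparent.)
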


\begin{corollary}[{\cite[Corollary 3.8]{ve5}}]\label{cor_freeproduct3}
Let $\sK$ be a discrete simplicial complex on three points; i.e., $$\racg_\sK = \mathbb Z_2\langle g_1 \rangle \ast \mathbb Z_2\langle g_2 \rangle \ast \mathbb Z_2\langle g_3 \rangle.$$ Then $L^4(\racg_\sK)\cong\mathbb Z_2^8$ has the following minimal collection of generators:
$$
[\mu_j, \mu_i, \mu_i, \mu_i], [\mu_k, \mu_i, \mu_i, \mu_i], [\mu_k, \mu_j, \mu_j, \mu_i], [\mu_k, \mu_i, \mu_j, \mu_i],$$
$$[\mu_k, \mu_i, \mu_i, \mu_j],
[\mu_k, \mu_j, \mu_j, \mu_j], [\mu_k, \mu_j, \mu_i, \mu_j], [\mu_k, \mu_i, \mu_j, \mu_k],
$$
where $\mu_i = [g_i]$ is the conjugacy class corresponding to $g_i$ and $(i,j,k)$ is an arbitrary permutation of $(1,2,3)$.
\end{corollary}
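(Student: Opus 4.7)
The plan is to combine the lower bound from Proposition \ref{numbergens3} (at least $8$ generators are needed) with an explicit reduction showing that the eight displayed commutators already span $L^4(\racg_\sK)$ as a $\mathbb{Z}_2$-vector space. By Proposition \ref{kv}, each graded component of $L(\racg_\sK)$ is annihilated by $2$, so it is a $\mathbb{Z}_2$-vector space in which antisymmetry of the bracket reads $[\mu_a,\mu_b]=[\mu_b,\mu_a]$ and $[\mu_a,\mu_a]=0$. By Proposition \ref{comb}, $L^4(\racg_\sK)$ is spanned by the simple length-four nested brackets $[\mu_{i_1},\mu_{i_2},\mu_{i_3},\mu_{i_4}]$ with indices in $\{1,2,3\}$; normalizing the innermost pair by symmetry to $i_1<i_2$ and using $[\mu_a,\mu_a]=0$ cuts the number of candidates to at most $3\cdot 3\cdot 3 = 27$.

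The main reduction tool is the Jacobi identity inside $L(\racg_\sK)$, which in characteristic $2$ takes the form
\[
[[\mu_a,\mu_b],\mu_c] = [[\mu_a,\mu_c],\mu_b] + [[\mu_c,\mu_b],\mu_a];
\]
it can be read either abstractly (since $L(\racg_\sK)$ is a graded Lie algebra, as recalled above) or by direct computation from Lemma \ref{comtogPV} modulo $\gamma_5(\racg_\sK)$, after discarding the ``correction'' commutators that fall into $\gamma_5$. Applied inside length-four commutators---either to the inner triple $[\mu_{i_1},\mu_{i_2},\mu_{i_3}]$, or to the outer triple with $[\mu_{i_1},\mu_{i_2}]\in L^2$ playing the role of a single letter---together with the innermost symmetry, this produces a rewriting system that expresses each of the $27$ candidates as a $\mathbb{Z}_2$-linear combination of the eight displayed generators.

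The main obstacle is the combinatorial bookkeeping for this rewriting, since Jacobi only rotates three entries at a time and one must verify that the algorithm terminates in the prescribed eight-element set. I would organize the case analysis by partitioning the $27$ candidates according to the multiset of indices $\{i_1,i_2,i_3,i_4\}$; the delicate cases are those in which all three letters $i,j,k$ occur, where several orderings must be shown to collapse into a single displayed generator (most notably into $[\mu_k,\mu_i,\mu_j,\mu_k]$ and the ``mixed'' companions $[\mu_k,\mu_i,\mu_i,\mu_j]$, $[\mu_k,\mu_j,\mu_i,\mu_j]$, $[\mu_k,\mu_j,\mu_j,\mu_i]$). Once spanning by the eight commutators is verified, Proposition \ref{numbergens3} immediately forces them to be linearly independent over $\mathbb{Z}_2$, so they form a basis and $L^4(\racg_\sK)\cong\mathbb{Z}_2^8$, as claimed.
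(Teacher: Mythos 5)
First, a remark on the comparison: the paper does not actually prove this corollary --- it is imported verbatim from \cite[Corollary 3.8]{ve5} --- so your proposal must be judged on its own. Its outer frame is sound: the lower bound from Proposition~\ref{numbergens3}, the fact that $2L^4(\racg_\sK)=0$ (Proposition~\ref{kv}), and the spanning of $L^4(\racg_\sK)$ by left-normed length-four brackets of the $\mu_i$ (Proposition~\ref{comb}) are all correct, and so is the observation that independence of the eight elements follows automatically once spanning is established. The gap is in the central reduction step. Antisymmetry and the Jacobi identity are precisely the defining identities of the free Lie ring on $\mu_1,\mu_2,\mu_3$ over $\mathbb Z_2$, whose degree-four component has rank $18$ by Witt's formula and is itself spanned by the left-normed brackets. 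Hence no rewriting system built only from Jacobi and antisymmetry can collapse the span of your $27$ candidates below dimension $18$, let alone to $8$; concretely, the eight listed commutators occupy only the multidegrees $i^3j$, $i^3k$, $j^3k$, $ij^2k$, $i^2jk$, $ijk^2$, so nothing of multidegree $ij^3$ or $i^2j^2$ (nonzero components of the free Lie ring) can be reached from them by Lie identities, which preserve multidegree.

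The missing ingredient is the family of relations in $L(\racg_\sK)$ forced by $g_i^2=e$, which are not Lie-ring identities and do not preserve multidegree. The simplest instance already appears in degree three: the Witt--Hall identity $(a,bc)=(a,c)(a,b)(a,b,c)$ with $b=c=g_i$ and $g_i^2=e$ gives $(a,g_i,g_i)=(a,g_i)^{-2}$, and a direct computation in $\mathbb Z_2\ast\mathbb Z_2$ then shows $[\mu_a,\mu_b,\mu_a]=[\mu_a,\mu_b,\mu_b]\neq 0$, a relation invisible to Jacobi and antisymmetry (compare Theorem~\ref{LRCK}(c), which gives $\dim L^3=5$ here versus rank $8$ for the free Lie ring). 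It is exactly these ``squaring'' relations, propagated to degree four by further group-level computation with Lemma~\ref{comtogPV} and Proposition~\ref{kv}, that supply the remaining ten relations cutting $18$ down to $8$; deriving and applying them is the actual content of the cited proof in \cite{ve5}. To repair your argument you would need to establish these relations explicitly (in the group, modulo $\gamma_5$) and add them to your rewriting system, not merely invoke the Lie-algebra axioms.
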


\begin{theorem}[{\cite[Theorem 3.9]{ve5}}]\label{commcox3}
Let $\sK$ be a simplicial complex on the set $[3]=\{1,2,3\}$.
\begin{enumerate}[label=(\alph*)]
\item If $\sK$ contains all edges $\{1, 2\}, \{1, 3\}, \{2, 3\}$, then $L^4(\racg_\sK) = \{e\}$ and there are no generators.
\item If $\sK$ contains only two edges $\{i, k\}, \{j, k\}$, where $(i,j,k)$ is a permutation of $(1,2,3)$ and $i<j$, then $L^4(\racg_\sK) \cong \mathbb Z_2$ is minimally generated by the element $[\mu_i, \mu_j, \mu_i, \mu_i]$;
\item If $\sK$ contains only one edge $\{i, j\}$, where $i, j \in \{1, 2, 3\}, i < j$, then $L^4(\racg_\sK) \cong \mathbb Z_2^4$ is minimally generated by the four elements
$$
[\mu_i, \mu_k, \mu_i, \mu_i],~[\mu_k, \mu_j, \mu_k, \mu_k],~[\mu_k, \mu_j, \mu_k, \mu_i],~[\mu_k, \mu_j, \mu_i, \mu_k],
$$
where $k \neq i, k \neq j$.
\item If $\sK$ has no edges, then $L^4(\racg_\sK) \cong \mathbb Z_2^8$ is minimally generated by the eight elements
$$
[\mu_j, \mu_i, \mu_i, \mu_i],~[\mu_k, \mu_i, \mu_i, \mu_i],~[\mu_k, \mu_j, \mu_j, \mu_i],~ [\mu_k, \mu_i, \mu_j, \mu_i],$$
$$[\mu_k, \mu_i, \mu_i, \mu_j],~
[\mu_k, \mu_j, \mu_j, \mu_j],~[\mu_k, \mu_j, \mu_i, \mu_j],~[\mu_k, \mu_i, \mu_j, \mu_k],
$$
where $\mu_i = [g_i]$ is the conjugacy class of $g_i$  and $(i,j,k)$ is an arbitrary permutation of $(1,2,3)$.
\end{enumerate}
\end{theorem}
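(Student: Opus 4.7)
The plan is to treat the four cases separately, using Theorem \ref{LRCK} as the starting point (which gives explicit bases for $L^1$, $L^2$, $L^3$) together with Proposition \ref{kv} (every element of $\gamma_k(\racg_\sK)$ has its square in $\gamma_{k+1}$, so $2x=0$ in every $L^k(\racg_\sK)$), the Witt--Hall identities, and Lemma \ref{comtogPV}. The target dimensions are pinned down by the minimal-generator counts already recorded as Propositions \ref{numbergens3} and \ref{numbergens3oneedge}.

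Case (a) is immediate: if $\sK$ is the full 2-simplex then every pair of generators commutes, so $\racg_\sK\cong\Z_2^3$ is abelian, $\gamma_2=\{e\}$, and $L^4=\{e\}$. Case (d) is exactly Corollary \ref{cor_freeproduct3}, with the eight generators explicitly listed, so nothing remains.

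For case (b), with edges $\{i,k\}$ and $\{j,k\}$, the generator $g_k$ commutes with $g_i$ and $g_j$, giving the direct product decomposition $\racg_\sK\cong\Z_2\langle g_k\rangle\times\bigl(\Z_2\langle g_i\rangle\ast\Z_2\langle g_j\rangle\bigr)$. Since $\gamma_n(G\times H)=\gamma_n(G)\times\gamma_n(H)$, one has $L^4(\racg_\sK)\cong L^4(D_\infty)$, where $D_\infty$ is the infinite dihedral group. On the 2-generator free product $\Z_2\ast\Z_2$ one uses Theorem \ref{LRCK}(c), which says that $L^3$ is spanned by $[\mu_i,\mu_j,\mu_i]$ and $[\mu_i,\mu_j,\mu_j]$, and then brackets with $\mu_i$ and $\mu_j$: the Jacobi identity together with anticommutativity and $2\mu_i=2\mu_j=0$ collapses all four resulting length-4 simple commutators to $[\mu_i,\mu_j,\mu_i,\mu_i]$ up to sign/equality modulo 2. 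Non-triviality is supplied by the explicit description of $L(D_\infty)$ in~\cite[Proposition 4.4]{ve3}.

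For case (c), with the unique edge $\{i,j\}$ and third vertex $k$, I would first list the $L^3$ basis provided by Theorem \ref{LRCK}(c), then generate $L^4$ by bracketing each such basis element with every $\mu_s$, $s\in\{i,j,k\}$. The reduction proceeds using (i) $2x=0$ in $L^4$; (ii) the identity $[\mu_i,\mu_j]=0$ in $L^2$ because $\{i,j\}\in\sK$; (iii) the Jacobi identity and Lemma \ref{comtogPV} to eliminate non-simple nestings. Careful bookkeeping shows that every such length-4 commutator lies in the $\Z_2$-span of the four listed elements. Since Proposition \ref{numbergens3oneedge} asserts that any minimal generating set of $L^4$ in this case has exactly four elements, our spanning set of size four is forced to be minimal; combined with $2x=0$ this yields $L^4(\racg_\sK)\cong\Z_2^4$.

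The main obstacle is the book-keeping in case (c): there are on the order of a dozen candidate brackets $[X,\mu_s]$ with $X$ running over the $L^3$ basis, and reducing each one to a $\Z_2$-combination of the four target commutators requires repeated applications of Jacobi modulo the relation $[\mu_i,\mu_j]=0$. The secondary subtlety is confirming non-vanishing of the four listed elements; here one relies on comparing dimensions with Proposition \ref{numbergens3oneedge} rather than on a direct computation inside $\racg_\sK$.
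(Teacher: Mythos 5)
First, a point of reference: the paper does not prove this statement at all --- it is imported verbatim as \cite[Theorem 3.9]{ve5}, so there is no in-paper proof to compare against. Judged on its own, your skeleton is sound: case (a) is indeed trivial since $\racg_\sK\cong\Z_2^3$ is abelian; case (d) is literally Corollary~\ref{cor_freeproduct3}; case (b) via the splitting $\racg_\sK\cong\Z_2\times(\Z_2\ast\Z_2)$, $\gamma_n(G\times H)=\gamma_n(G)\times\gamma_n(H)$ and the known $L(D_\infty)$ is correct (note only that by Theorem~\ref{LRCK}(c) the $L^3$ basis for two points is the single element $[\mu_i,\mu_j,\mu_j]$, not two elements); and your minimality logic --- $L^4$ is a $\Z_2$-vector space by Proposition~\ref{kv}, so a spanning set whose cardinality equals the minimal number of generators from Propositions~\ref{numbergens3oneedge}/\ref{numbergens3} is automatically a basis --- is airtight. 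The identity $L^4=[L^3,L^1]$, which justifies generating by bracketing the $L^3$ basis with the $\mu_s$, also holds in the associated graded of any group.

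The genuine gap is in case (c), where you write that ``careful bookkeeping shows that every such length-4 commutator lies in the $\Z_2$-span of the four listed elements.'' That bookkeeping is not a routine consequence of the Jacobi identity, anticommutativity, $2x=0$ and $[\mu_i,\mu_j]=0$, because $L(\racg_\sK)$ is \emph{not} the free graded Lie algebra over $\Z_2$ on these relations: the relations $g_s^2=1$ inject additional identities into $L^4$ (for instance, $(a,b^2)=(a,b)^2(a,b,b)=e$ forces $(a,b,b)\equiv(a,b)^{-2}$, whose class in $L^3$ and whose brackets in $L^4$ must be computed via the Witt--Hall identities and Lemma~\ref{comtogPV}, not read off from Lie-algebra axioms). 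Reducing the nine spanning brackets $[x,\mu_s]$ (for $x$ in the three-element $L^3$ basis) to the four listed commutators requires exactly these group-level computations --- equivalently, an independence argument via a Magnus-type embedding as in Section~3 of the paper --- and this is where essentially all of the content of the theorem resides. Until that reduction is carried out explicitly, the proposal is a plan rather than a proof of case (c).
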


\begin{theorem}[{\cite[Theorem 3.11]{ve5}}]\label{theorem_freeproduct4}
Let $\sK$ be the discrete simplicial complex on four points; i.e., $$\racg_\sK = \mathbb Z_2\langle g_1 \rangle \ast \mathbb Z_2\langle g_2 \rangle \ast \mathbb Z_2\langle g_3 \rangle \ast \mathbb Z_2\langle g_4 \rangle.$$ Then $L^4(\racg_\sK)\cong\mathbb Z_2^{32}$ has a minimal generating set $\overline{A_i}\cup \overline{A_j}\cup \overline{A_k}\cup \overline{A_l}\cup \overline{B}$, where
\begin{align*}
\overline{A_i}= \{&[\mu_k, \mu_j, \mu_j, \mu_i], [\mu_k, \mu_i, \mu_j, \mu_i], [\mu_k, \mu_i, \mu_i, \mu_j], [\mu_k, \mu_j, \mu_i, \mu_j], [\mu_k, \mu_i, \mu_j, \mu_k] \},\\
\overline{A_j}= \{&[\mu_j, \mu_i, \mu_i, \mu_i], [\mu_l, \mu_j, \mu_j, \mu_i], [\mu_l, \mu_i, \mu_j, \mu_i], [\mu_l, \mu_i, \mu_i, \mu_j], [\mu_l, \mu_j, \mu_i, \mu_j],\\
&[\mu_l, \mu_i, \mu_j, \mu_l] \},\\
\overline{A_k}= \{&[\mu_k, \mu_i, \mu_i, \mu_i], [\mu_l, \mu_i, \mu_i, \mu_i], [\mu_l, \mu_k, \mu_k, \mu_i], [\mu_l, \mu_i, \mu_k, \mu_i], [\mu_l, \mu_i, \mu_i, \mu_k],\\
&[\mu_l, \mu_k, \mu_i, \mu_k], [\mu_l, \mu_i, \mu_k, \mu_l] \},\\
\overline{A_l}= \{&[\mu_k, \mu_j, \mu_j, \mu_j], [\mu_l, \mu_j, \mu_j, \mu_j], [\mu_l, \mu_k, \mu_k, \mu_j], [\mu_l, \mu_j, \mu_k, \mu_j], [\mu_l, \mu_j, \mu_j, \mu_k],\\
&[\mu_l, \mu_k, \mu_k, \mu_k], [\mu_l, \mu_k, \mu_j, \mu_k], [\mu_l, \mu_j, \mu_k, \mu_l] \},\\
\overline{B}= \{&[\mu_j, \mu_l, \mu_k, \mu_i], [\mu_i, \mu_l, \mu_k, \mu_j], [\mu_i, \mu_l, \mu_j, \mu_k], [\mu_j, \mu_l, \mu_i, \mu_k], [\mu_k, \mu_l, \mu_i, \mu_j],\\
&[\mu_k, \mu_l, \mu_j, \mu_i] \},
\end{align*}
$\mu_i = [g_i]$ is the conjugacy class of $g_i$, and $(i,j,k,l)$ is an arbitrary permutation of $(1,2,3,4)$.
\end{theorem}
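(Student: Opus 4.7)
The strategy is to combine the lower bound of $32$ on the size of a minimal generating set of $L^4(\racg_\sK)$, provided by Proposition~\ref{numbergens4}, with a direct verification that the listed set of $32$ elements spans $L^4(\racg_\sK)$ over $\mathbb Z_2$. Since Proposition~\ref{kv} makes $L^4(\racg_\sK)$ into a $\mathbb Z_2$-vector space, matching the minimum cardinality forces our set to be a basis, yielding $L^4(\racg_\sK) \cong \mathbb Z_2^{32}$. By Proposition~\ref{comb}, it suffices to express every simple nested commutator $[\mu_{a_1}, \mu_{a_2}, \mu_{a_3}, \mu_{a_4}]$ with $a_s \in \{1,2,3,4\}$ as a $\mathbb Z_2$-linear combination of the $32$ listed elements modulo $\gamma_5(\racg_\sK)$. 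I would split the argument according to the number of distinct indices involved: three-letter versus four-letter commutators.

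For the three-letter case, I would exploit the fact that for every three-element subset $S \subset \{1,2,3,4\}$ the subgroup $\langle g_s : s \in S\rangle \subset \racg_\sK$ is a free product $\mathbb Z_2 \ast \mathbb Z_2 \ast \mathbb Z_2$ and is a retract of $\racg_\sK$ via the homomorphism sending the remaining generator to $1$. The corresponding section induces an injection on the fourth lower central quotients. Applying Corollary~\ref{cor_freeproduct3} inside each of the four triples $\{i,j,k\}, \{i,j,l\}, \{i,k,l\}, \{j,k,l\}$ expresses every three-letter simple commutator, modulo $\gamma_5$, as a $\mathbb Z_2$-linear combination of the eight commutators listed there. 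Collecting the results over the four triples and keeping exactly one representative of each of the six genuinely two-letter commutators $[\mu_b, \mu_a, \mu_a, \mu_a]$ (each of which naturally belongs to two triples) produces the $26$ elements comprising $\overline{A_i} \cup \overline{A_j} \cup \overline{A_k} \cup \overline{A_l}$.

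The main obstacle is the four-letter reduction: the $24$ permutation commutators $[\mu_{\pi(1)}, \mu_{\pi(2)}, \mu_{\pi(3)}, \mu_{\pi(4)}]$ must be reduced modulo $\gamma_5$ to the six elements of $\overline{B}$ together with three-letter commutators. The plan is to apply the identities of Lemma~\ref{comtogPV} repeatedly, rewriting each non-canonical commutator and tracking the correction terms, which are either of depth greater than four (hence already in $\gamma_5$) or three-letter length-four commutators already handled above. The delicate part is the combinatorial bookkeeping: one must verify that after all reductions every one of the $24$ permutation commutators lands in the $\mathbb Z_2$-span of $\overline{B}$ modulo $\overline{A_i} \cup \overline{A_j} \cup \overline{A_k} \cup \overline{A_l}$, which amounts to showing that the six chosen four-letter commutators span the quotient of the $24$-element permutation module by the Jacobi-type relations. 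Once both reductions are complete, the $32$ elements span, Proposition~\ref{numbergens4} supplies the matching lower bound, and the isomorphism $L^4(\racg_\sK) \cong \mathbb Z_2^{32}$ follows.
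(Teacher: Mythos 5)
First, note that the paper does not prove this statement at all: it is imported verbatim from \cite[Theorem~3.11]{ve5} and used as an input to Proposition~\ref{coxeter4pointsgens}. So there is no in-paper proof to compare against; the closest analogue is the method the paper uses in Proposition~\ref{coxeter4pointsgens}, which is exactly your strategy run in reverse (start from this $32$-element set, degenerate edges, and quote Waldinger's counts for minimality). Your overall architecture is sound and is the expected one: Proposition~\ref{kv} makes $L^4$ a $\Z_2$-vector space, Proposition~\ref{numbergens4} pins its dimension at $32$, so it suffices to exhibit $32$ spanning elements; Proposition~\ref{comb} reduces spanning to the length-$4$ simple nested commutators; the $\le 3$-letter ones are handled by functoriality of $L^4$ applied to the subgroups $\mathbb Z_2^{\ast 3}$ together with Corollary~\ref{cor_freeproduct3}; and your count $4\cdot 8-6=26$ for the deduplicated union over triples is right (with the minor caveat that matching the specific sets $\overline{A_i},\dots,\overline{A_l}$ requires choosing the permutation $(i,j,k)$ in Corollary~\ref{cor_freeproduct3} consistently across the four triples).

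The one genuine gap is the four-letter reduction, which is the only substantive computation in the whole proof and which you describe as a plan rather than carry out: nothing in your text certifies that the $24$ commutators $[\mu_{\pi(1)},\mu_{\pi(2)},\mu_{\pi(3)},\mu_{\pi(4)}]$ really do fall into the span of the six elements of $\overline{B}$ modulo three-letter terms. Moreover, routing this through Lemma~\ref{comtogPV} is both heavier and riskier than necessary, since those are group identities whose correction terms you would have to track modulo $\gamma_5$. The clean way is to work entirely inside the Lie algebra $L(\racg_\sK)$, as the paper does in the proof of Proposition~\ref{coxeter4pointsgens}(e): over $\Z_2$ one has $[\mu_a,\mu_l,\mu_b,\mu_c]=[\mu_l,\mu_a,\mu_b,\mu_c]$, so $\overline{B}$ is the set of left-normed multilinear commutators with $\mu_l$ in first position, and the statement that these six span all $24$ permutation commutators is the standard basis theorem for the multilinear degree-$4$ component of a Lie algebra generated by $\mu_1,\dots,\mu_4$ (dimension $(4-1)!=6$), proved by two applications of the Jacobi identity plus antisymmetry and involving no lower-order corrections at all. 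With that substitution your argument closes; as written, the decisive step is asserted, not proved.
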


\section{On the Magnus mapping}

Let $\sK$ be a simplicial complex on $[m]$, and let $V$ be an $m$-dimensional vector space over $\Z_2$. By $$T_{\Z_2}(\sK^0) = \bigoplus_{k=0}^\infty T^k V$$ we denote the tensor algebra with m generators over $\Z_2$. Let $$\widehat{T}_{\Z_2}(\sK^0) = \prod_{k=0}^\infty T^k V$$ be the completed tensor algebra
\begin{definition}
Set
$$
U^{\infty}_{\RCK} = \widehat{T}_{\Z_2}(\K^0) / \langle v_i^2 = 0, \forall i \in \K^0; \ \ v_i v_j + v_i v_j = 0, \{i,j\} \in \K^1 \rangle,
$$
and introduce the Lie bracket by the rule $[a, b] = ab - ba$. The Lie algebra $U^{\infty}_{\RCK}$ is called the \emph{Magnus algebra}.
\end{definition}

Let $U^{\infty, i}_{\RCK}$ consist of all monomials of length i; i.e., $$U^{\infty}_{\RCK} = \prod_{i=0}^{\infty} U^{\infty, i}_{\RCK}.$$ We introduce the associative algebra consisting of elements of finite length in the Magnus algebra,
$$
U_{\RCK} = \bigoplus_{i=0}^{\infty} U^{\infty, i}_{\RCK} = T_{\Z_2}(\K^0) / \langle v_i^2 = 0, \forall i \in \K^0; \ \ v_i v_j + v_j v_i = 0, \{i,j\} \in \K^1 \rangle.
$$

We denote the ideal of monomials of length $\geq k$ in $U_{\RCK}$ and $U^\infty_{\RCK}$ by $$U^{}_{\RCK, k} = \bigoplus\limits_{i \geqslant k}  U^{\infty, i}_{\RCK},\quad U^{\infty}_{\RCK, k} =  \prod\limits_{i \geqslant k}  U^{\infty, i}_{\RCK}.$$

For $a \in U_{RC_\K}^{\infty}$, by $a^i \in U_{RC_\K}^{\infty, i}$ we denote the $i$th graded component of $a$. 

The following theorem holds, which is similar to \cite[Theorem 5.6, Lemma 5.3]{Ma-Car-Sol} for the case of free groups.
 
\begin{theorem}\label{mugomomorph_tem}
Let $\sK$ be a simplicial complex on $[m]$.
\begin{enumerate}
\item The set $\Gamma\subset U^{\infty}_\RCK$ of all elements with nonzero constant term is a group with respect to multiplication.
\item The elements $a_i = 1 + v_i$ generate a subgroup $M \subset \Gamma$ isomorphic to $\RCK$.
\end{enumerate}
\end{theorem}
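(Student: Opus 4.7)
The plan is to handle Part~(1) by a short direct verification and Part~(2) by constructing a homomorphism $\varphi \colon \RCK \to \Gamma$, $g_i \mapsto a_i$, and showing it is injective onto $M$.

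For Part~(1), associativity and identity are inherited from $U^\infty_\RCK$, and closure follows from $(1+x)(1+y) = 1 + x + y + xy \in \Gamma$ for $x, y \in U^{\infty}_{\RCK, 1}$. For inverses, the geometric series $\sum_{k \geq 0} x^{k}$ converges in the completed algebra since its $k$-th summand lies in $U^{\infty}_{\RCK, k}$, and working over $\Z_2$ one checks directly that it is a two-sided inverse of $1+x$.

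For Part~(2), well-definedness of $\varphi$ reduces to checking the defining relations of $\RCK$ on the $a_i$. The identity $(1+v_i)^2 = 1 + 2v_i + v_i^2 = 1$ holds in characteristic $2$ thanks to $v_i^2 = 0$, and for $\{i,j\} \in \sK^1$ the relation $a_i a_j = a_j a_i$ reduces to $v_i v_j + v_j v_i = 0$, which is imposed in $U^\infty_\RCK$. For injectivity, I would invoke the normal-form solution to the word problem for right-angled Coxeter groups: every nontrivial $w \in \RCK$ has a reduced expression $w = g_{i_1} \cdots g_{i_n}$ of minimal length $n \geq 1$, unique up to swapping adjacent commuting generators. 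Its image $\varphi(w) = (1+v_{i_1}) \cdots (1+v_{i_n})$ has top-degree summand $v_{i_1} v_{i_2} \cdots v_{i_n} \in U^{\infty, n}_\RCK$, which is well-defined on $w$ because commutations of commuting $v$'s in $U^\infty_\RCK$ mirror commutations of the corresponding $g$'s in $\RCK$. This monomial vanishes if and only if some sequence of such commutations makes two equal $v_{i_k}$'s adjacent, but that would produce a cancellable pair in the Coxeter word, contradicting reducedness. Hence $\varphi(w) \neq 1$.

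The principal obstacle is the last step, which requires the classical characterisation of reduced words in right-angled Coxeter groups: a word is non-reduced exactly when commutations of commuting generators can make two equal letters adjacent. Once this is available, the bijection with vanishing monomials in $U^\infty_\RCK$ is transparent and injectivity follows; but rigorously setting up the correspondence between these two notions of reducibility, together with justifying that the defining relations of $U^\infty_\RCK$ are a full set (i.e.\ no hidden relations appear at higher degrees), is where most of the effort would go.
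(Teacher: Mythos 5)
Your proposal follows essentially the same route as the paper: part (1) via the geometric-series inverse in the completed algebra, and part (2) by checking the relations $(1+v_i)^2=1$ and $a_ia_j=a_ja_i$ on the generators and then showing that the top-degree component of the image of a shortest (reduced) word is a nonvanishing monomial. If anything, you are more explicit than the paper about the crux: the paper simply asserts that the degree-$k$ component of $\mu(z)$ records a shortest representative of $z$, whereas you correctly identify that this rests on matching the characterisation of reduced words in right-angled Coxeter groups (non-reduced iff commutations bring two equal letters together) with the vanishing criterion for monomials in $U^{\infty}_{\RCK}$.
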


\begin{proof}
Note that the algebra $U^{\infty}_{\racg_\sK}$ is closed with respect to multiplication and contains 1, and the inverse of an element $g = 1 + h$, $h \in U^{\infty}_{\RCK, 1}$, is the element $$g^{-1} = 1 + \sum_{q=1}^{\infty}h^q.$$

We will prove the second assertion. Let us verify that the necessarry relations in $\racg_\sK$ are satisfied in $M$. We have
\begin{align*}
(1+v_i)^{-1}&= (1+v_i),\\
(1+ v_i)^{-1}(1 + v_j)^{-1}(1 + v_i)(1 + v_j)&= 1 + ( v_i v_j + v_j v_i)(1+ v_i + v_j + v_i v_j).
\end{align*}
Thus, we have a surjective group homomorphism 
$$\mu: \RCK \rightarrow M \subset U_\RCK$$ defined on the generators by $\mu(g_i) = v_i + 1$.

Let $z \in \RCK = F_{|\sK^0|} / \langle g_i^2=0, g_ig_j=g_jg_i \text{ for } \{i, j\} \in \sK \rangle$. There exist finitely many shortest representatives of $z$, i.e., representatives of minimal length in $F_{|\sK^0|}$. Let $w = g_{i_1}...g_{i_k}$ be a shortest representative of $z$. Then
$$\mu(z) = (1+v_{i_1}) \dots (1+v_{i_k}) = 1 + \dots  + v_{i_1} \dots v_{i_k}.$$
Thus, for $z$ with a shortest representative of length $k$ we have $\mu(z)^i = 0$ for $i > k$, and the notation of the graded component $\mu(z)^k$ coincides with one of the shortest representatives of $z$. Therefore, for two words  $z_1, z_2 \in \RCK$ we see that $\mu(z_1) = \mu(z_2)$ implies the existence of the same shortest representatives of $z_1$ and $z_2$, whence it follows that $z_1=z_2$. This means that the homomorphism $\mu$ is an injection, which proves the theorem.
\end{proof}

The homomorphism $$\mu: \RCK \rightarrow M \subset U_\RCK$$ constructed in Theorem~\ref{mugomomorph_tem}  will be called the \emph{Magnus mapping}.

Set $D_i = \{ a \in \RCK: \mu(a) - 1 \in U_{\RCK, i} \}$.

\begin{proposition}
The sequence $\{ D_i\}_{i=1}^\infty$ is a central ceries for $\RCK$. 
\end{proposition}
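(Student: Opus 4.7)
The plan is to verify the three defining properties of a central series: each $D_i$ is a subgroup of $\RCK$, the containments $D_i \supseteq D_{i+1}$ hold, and $(D_i, \RCK) \subseteq D_{i+1}$ for every $i \geq 1$; normality of $D_i$ will then follow automatically from the last condition. First I would observe that $D_1 = \RCK$, because for any word $a = g_{i_1}\cdots g_{i_k}$ one has $\mu(a) = (1+v_{i_1})\cdots(1+v_{i_k}) = 1+h$ with $h \in U_{\RCK, 1}$. The inclusion $D_{i+1} \subseteq D_i$ is immediate from $U_{\RCK, i+1} \subseteq U_{\RCK, i}$.

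Next I would show that each $D_i$ is a subgroup. Writing $\mu(a) = 1+x$ and $\mu(b) = 1+y$ with $x, y \in U_{\RCK, i}$, multiplicative closure follows from
\[
\mu(ab) - 1 = (1+x)(1+y) - 1 = x + y + xy \in U_{\RCK, i},
\]
since $U_{\RCK, i}$ is a two-sided ideal. For inverses, I would invoke the formula $\mu(a)^{-1} = 1 + \sum_{q \geq 1} x^q$ established in the proof of Theorem~\ref{mugomomorph_tem}; each term $x^q$ lies in $U^\infty_{\RCK, iq} \subseteq U^\infty_{\RCK, i}$, so $\mu(a^{-1}) - 1 \in U^\infty_{\RCK, i}$, and since this element a~priori belongs to $U_\RCK$, it lies in $U_{\RCK, i}$.

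The main step is the central condition $(D_i, \RCK) \subseteq D_{i+1}$. For $a \in D_i$ and $b \in \RCK = D_1$, set $\alpha = \mu(a) = 1+x$ and $\beta = \mu(b) = 1+y$ with $x \in U_{\RCK, i}$ and $y \in U_{\RCK, 1}$. A direct expansion yields $\alpha\beta - \beta\alpha = xy - yx$, so
\[
\mu((a,b)) - 1 = \alpha^{-1}\beta^{-1}\alpha\beta - 1 = \alpha^{-1}\beta^{-1}(\alpha\beta - \beta\alpha) = \alpha^{-1}\beta^{-1}(xy - yx).
\]
Since $U_\RCK$ is a filtered algebra, $xy - yx \in U_{\RCK, i+1}$; multiplying by $\alpha^{-1}\beta^{-1} \in U^\infty_\RCK$, whose constant term equals $1$, preserves the filtration and gives $\mu((a,b)) - 1 \in U^\infty_{\RCK, i+1}$. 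Because the left-hand side lies in $U_\RCK$, we conclude that $\mu((a,b)) - 1 \in U_{\RCK, i+1}$, i.e.\ $(a,b) \in D_{i+1}$. Normality of $D_i$ is then automatic: for $a \in D_i$ and $g \in \RCK$ one has $g^{-1}ag = a \cdot (a,g) \in D_i \cdot D_{i+1} \subseteq D_i$.

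The main obstacle is bookkeeping around the ideal structure of $U^\infty_{\RCK, k}$ in the completed algebra together with the interaction of the inverses $\alpha^{-1}, \beta^{-1}$ with the filtration. Once one notices that the group commutator, after the identity $\alpha^{-1}\beta^{-1}\beta\alpha = 1$, becomes a unit times the ring-theoretic commutator $xy-yx$, and that this commutator sits one filtration level deeper than $x$, the remainder of the argument is a short filtration-degree count.
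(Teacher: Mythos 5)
Your proof is correct. It rests on the same underlying fact as the paper's argument --- namely the inclusion $(1+U^{\infty}_{\RCK,k},\,1+U^{\infty}_{\RCK,l})\subset 1+U^{\infty}_{\RCK,k+l}$ --- but where the paper outsources this to Bourbaki (the filtration $1+U^{\infty}_{\RCK,i}$ is a central series of the group $\Gamma$ of elements with constant term $1$) and then merely intersects with the subgroup $M=\mu(\RCK)$, you unpack that citation into the explicit identity $\mu((a,b))-1=\alpha^{-1}\beta^{-1}(\alpha\beta-\beta\alpha)=\alpha^{-1}\beta^{-1}(xy-yx)$ followed by a filtration-degree count. This makes the argument self-contained and forces you to check two points the paper leaves implicit: that each $D_i$ is in fact a subgroup (closure under inverses via the geometric series over $\mathbb Z_2$), and the passage from the completed algebra $U^{\infty}_{\RCK}$ back to the finite-length algebra $U_{\RCK}$, which you correctly justify by noting that $\mu$ takes values in $U_{\RCK}$. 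One cosmetic remark: in the centrality step the observation that $\alpha^{-1}\beta^{-1}$ has constant term $1$ is not needed --- $U^{\infty}_{\RCK,i+1}$ is a two-sided ideal, so multiplying $xy-yx$ by any element of the algebra keeps it there. Both routes are valid; yours proves slightly less than the paper's intermediate claim $(M_k,M_l)\subset M_{k+l}$ for all $k,l$, but the case $l=1$ that you establish is all that the definition of a central series requires.
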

\begin{proof}
Consider the multiplicative group $(U^{\infty}_\RCK)^*$ and the multiplicative group $\Gamma$ of elements with constant term $1$. According to \cite[Chap. 2, \textsection 4,5]{Bourbaki}, the filtration $1 + U^{\infty}_{\RCK, i}$ is a central series for $\Gamma$, and also $\Gamma \subset (U^{\infty}_\RCK)^*$. Actually, $\Gamma = (U^{\infty}_\RCK)^*$, which follows from the fact that the product of two elements has zero constant term whenever at least one of factors has zero constant term.  

Further, $M \cong \RCK$ is a subgroup of $\Gamma$ with the filtration $M_i = \mu(D_i) \subset 1 + U^{\infty}_{\RCK, i}$ induced from $\{ D_i\}_{i=1}^\infty$. Thus, on the one hand, 
$$(M_k, M_l) \subset (1 + U^{\infty}_{\RCK, k}, 1 + U^{\infty}_{\RCK, l}) \subset 1 + U^{\infty}_{\RCK, k+l},$$ 
and on the other hand, $(M_k, M_l) \subset M$, whence it follows that $(M_k, M_l) \subset M_{k+l}$. Thus, $\{M_k\}_{i=1}^\infty$ is a central series for $M$, and hence $\{ D_i\}_{i=1}^\infty$ is a central series for $\RCK$.
\end{proof}

\begin{corollary} \label{muk} Let $x \in \RCK$. If $\mu(x)^k \neq 0$, then $x \notin \gamma_{k+1}(\racg_\sK)$.
\end{corollary}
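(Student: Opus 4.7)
The plan is to derive the corollary directly from the preceding proposition by comparing $\{D_i\}$ with the lower central series. Recall the elementary fact that if $\{H_i\}_{i\geq 1}$ is any central series of a group $G$ with $H_1 = G$, then $\gamma_i(G) \subseteq H_i$ for every $i$; this is proved by induction on $i$, using $\gamma_{i+1}(G) = (\gamma_i(G), G) \subseteq (H_i, G) \subseteq H_{i+1}$, where the last inclusion expresses centrality of $H_i/H_{i+1}$ in $G/H_{i+1}$.

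First I would check that the series $\{D_i\}$ starts at the full group: since the constant term of $\mu(a)$ equals $1$ for every $a \in \racg_\sK$ (as $\mu(g_i) = 1 + v_i$ and $\mu$ is multiplicative), we have $\mu(a) - 1 \in U_{\racg_\sK,1}$ for all $a$, hence $D_1 = \racg_\sK$. Combined with the previous proposition, $\{D_i\}_{i\geq 1}$ is a central series of $\racg_\sK$ beginning with the whole group, so the inclusion $\gamma_i(\racg_\sK) \subseteq D_i$ holds for all $i \geq 1$.

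Now I would prove the contrapositive of the statement. Suppose $x \in \gamma_{k+1}(\racg_\sK)$. By the inclusion just established, $x \in D_{k+1}$, which by definition means $\mu(x) - 1 \in U^{\infty}_{\racg_\sK, k+1}$. Projecting onto the graded component of degree $k$ yields $\mu(x)^k = 0$, contradicting the hypothesis $\mu(x)^k \neq 0$. Therefore $x \notin \gamma_{k+1}(\racg_\sK)$, which is exactly the claim.

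There is no real obstacle here; the content of the corollary is just the observation that the Magnus filtration $\{D_i\}$, being central, bounds the lower central series from above, so nonvanishing of a low-degree component of $\mu(x)$ obstructs $x$ from lying deep in the lower central series. The only thing that requires a moment of care is verifying that $D_1 = \racg_\sK$, which comes for free from the form of the generators $\mu(g_i) = 1 + v_i$.
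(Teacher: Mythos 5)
Your proof is correct and follows essentially the same route as the paper: both deduce $\gamma_{k+1}(\racg_\sK)\subseteq D_{k+1}$ from the fact that the lower central series descends at least as fast as any central series starting at the whole group, and then read off the vanishing of $\mu(x)^k$ from the definition of $D_{k+1}$. You merely spell out the standard induction and the check that $D_1=\racg_\sK$, which the paper leaves implicit.
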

\begin{proof}
We haev $\gamma_k(\racg_\sK) \subset D_k$, because $\gamma_k(\racg_\sK)$ is the lower central series. Hence for an element $x \in \gamma_k(\racg_\sK)$ (a commutator of length $k$) one has $\mu(x)^i = 0$, $0 < i < k$, as desired.
\end{proof}

\begin{proposition}\label{notgamma5elems}
Given a simplicial complex $\mathcal{K}$ on four vertices $[4] = \{ v_1,  v_2 ,  v_3 , v_4\}$ with edges $\{ \{ v_4 v_1\}, \{ v_4  v_3 \} \}$, one has
$$( (v_2, v_4) ,  (v_1, v_3) ),~( v_2  v_4  v_1  v_3 ),~( v_2  v_4  v_3  v_1 ) \notin \gamma_5.$$ 
\end{proposition}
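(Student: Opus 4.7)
The plan is to use the Magnus mapping $\mu\colon\RCK\to U^\infty_\RCK$ together with Corollary~\ref{muk}, which reduces the claim to showing that $\mu(x)^4\ne 0$ in $U_\RCK$ for each of the three elements $x$. Starting from $\mu(g_i)=1+v_i$ and the identity $(u,v)-1=u^{-1}v^{-1}[\alpha,\beta]$ (where $\alpha=\mu(u)-1$, $\beta=\mu(v)-1$, and $[\alpha,\beta]=\alpha\beta-\beta\alpha$), an induction on commutator length gives that if $u$ and $v$ have leading $\mu$-components $\alpha_p,\beta_q$ in degrees $p,q$, then $\mu((u,v))-1$ has leading component $[\alpha_p,\beta_q]$ in degree $p+q$. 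Iterating,
\begin{align*}
\mu\bigl(((g_2,g_4),(g_1,g_3))\bigr)^4 &= \bigl[[v_2,v_4],[v_1,v_3]\bigr],\\
\mu\bigl((g_2,g_4,g_1,g_3)\bigr)^4 &= [v_2,v_4,v_1,v_3],\\
\mu\bigl((g_2,g_4,g_3,g_1)\bigr)^4 &= [v_2,v_4,v_3,v_1].
\end{align*}
Thus the task reduces to showing that these three degree-$4$ Lie elements are nonzero in $U_\RCK$.

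To test nonvanishing I would fix a $\Z_2$-basis of $U_\RCK$ via a normal form: orient the two edge commutation relations of $\sK$ as the rewrite rules $v_1v_4\to v_4v_1$ and $v_3v_4\to v_4v_3$, together with $v_i^2\to 0$ for all $i$. The system is terminating (each rewrite either strictly shortens the word or decreases the number of ``$v_1$ or $v_3$ preceding a $v_4$'' inversions), and the only critical pairs $v_1v_1v_4,\ v_1v_4v_4,\ v_3v_3v_4,\ v_3v_4v_4$ all reduce unambiguously to $0$, so the system is confluent. The irreducible monomials therefore form a $\Z_2$-basis of $U_\RCK$; in degree $4$ they are the words $v_{i_1}v_{i_2}v_{i_3}v_{i_4}$ with no two equal consecutive letters and no adjacent $v_1v_4$ or $v_3v_4$.

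Expanding each of the three Lie elements in characteristic $2$ (where $[a,b]=ab+ba$) yields $2^3=8$ length-$4$ monomials. After applying the rewrites, one checks that in each case the eight monomials become pairwise distinct irreducible words: for $[[v_2,v_4],[v_1,v_3]]$ the normalized expansion is
\[
v_2v_4v_1v_3+v_2v_4v_3v_1+v_4v_2v_1v_3+v_4v_2v_3v_1+v_1v_3v_2v_4+v_4v_1v_3v_2+v_3v_1v_2v_4+v_4v_3v_1v_2,
\]
and the other two brackets produce similar lists of eight distinct basis words. Hence each of the three degree-$4$ components is a nonzero sum of basis vectors, and Corollary~\ref{muk} finishes the proof.

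The principal obstacle is the combinatorial bookkeeping needed to confirm that no two of the eight summands in any of the three expansions coincide after normalization, which would cancel in characteristic $2$. Since the letter $v_2$ has no commutation with any $v_i$, its position within each summand is preserved under the rewrites and acts as a distinguishing invariant, which makes the pairwise distinctness of the eight words essentially automatic once the explicit list has been written down.
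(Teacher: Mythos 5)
Your proof is correct and follows essentially the same route as the paper: both push the three elements through the Magnus mapping, identify the degree-$4$ component as the corresponding degree-$4$ Lie element in the $v_i$'s, and conclude via Corollary~\ref{muk}. The only real difference lies in how non-cancellation of the eight summands is certified: the paper singles out a single monomial (e.g.\ $v_4v_2v_1v_3$) in which no two adjacent letters commute, so that it admits a unique representation and cannot be cancelled, whereas you construct a full normal form for $U_{\RCK}$ via a confluent rewriting system and check that all eight normalized words are pairwise distinct --- a more systematic but equivalent verification.
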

\begin{proof}
Consider the commutator $( (v_2, v_4) ,  (v_1, v_3) )$. We have
\begin{align*}
\mu(( (v_2, v_4) ,  (v_1, v_3) )) =  1&+ 
v_{2} v_{4} v_{1} v_{3} + v_{2} v_{4} v_{3} v_{1} + v_{4} v_{2} v_{1} v_{3} + v_{4} v_{2} v_{3} v_{1} \\ 
&+ v_{1} v_{3} v_{2} v_{4} + v_{1} v_{3} v_{4} v_{2} + v_{3} v_{1} v_{2} v_{4} + v_{3} v_{1} v_{4} v_{2} + U_{\RCK, 5}.
\end{align*}
Let us show that $$\mu(( (v_2, v_4) ,  (v_1, v_3) ))^4 \neq 0,$$ whence, by Corollary \ref{muk}, we obtain $$( (v_2, v_4) ,  (v_1, v_3) ) \notin \gamma_5(\racg_\sK).$$ The product of four distinct generators cannot be zero, and hence each term in $\mu(( (v_2, v_4) ,  (v_1, v_3) ))^4$ is not zero by itself. Further, consider the term $v_{4} v_{2} v_{1} v_{3} $. Since any two neighboring generators in this
representation do not commute, it follows that the monomial $v_{4} v_{2} v_{1} v_{3} $ can only be written in this unique
way. However, according to the relations in the algebra, this term can only be cancelled by the term
equal to it.

Further, since the commutators $( v_2  v_4  v_1  v_3 )$ and $( v_2  v_4  v_3  v_1 )$ differ by the change $v_1 \leftrightarrow v_3$, which preserves the relations in the algebra, it suffices to prove the assertion for one of these commutators.
We have
\begin{multline*}
\mu(( v_2, v_4, v_3, v_1 )) = 1 + 
v_{1} v_{3} v_{2} v_{4} + v_{1} v_{3} v_{4} v_{2} + v_{1} v_{2} v_{4} v_{3} + v_{1} v_{4} v_{2} v_{3} + \\
+ v_{3} v_{2} v_{4} v_{1} + v_{3} v_{4} v_{2} v_{1} + v_{2} v_{4} v_{3} v_{1} + v_{4} v_{2} v_{3} v_{1} + U_{\RCK, 5}.
\end{multline*}
As in the preceding argument, we see that the monomial $ v_{1} v_{3} v_{2} v_{4} \neq 0$ can only be written in this unique way, which implies that $\mu(( v_2, v_4, v_3, v_1 ))^4 \neq 0$, and hence $( ( v_2, v_4, v_3, v_1 ) ) \notin \gamma_5$.
\end{proof}

\begin{proposition}\label{notgamma5elems2}
	Given a simplicial complex $\mathcal{K}$ on four vertices $[4] = \{ v_1,  v_2 ,  v_3 , v_4\}$ with edges $\{ \{ v_{3}, v_{1}\}, \{ v_{1},  v_{4} \}, \{v_{4}, v_{2} \} \}$, one has $( v_{3}, v_{4}, v_{2}, v_{1} ) \notin \gamma_5$. 
\end{proposition}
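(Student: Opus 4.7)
The plan is to follow the strategy of Proposition~\ref{notgamma5elems}: by Corollary~\ref{muk} it suffices to show that the fourth graded component $\mu((v_3, v_4, v_2, v_1))^4$ is nonzero in the Magnus algebra $U_\RCK$.

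Since $(v_3, v_4, v_2, v_1) \in \gamma_4(\RCK)$, all components of $\mu((v_3, v_4, v_2, v_1))-1$ of degree less than $4$ vanish. A standard iteration of the identity $(1+u)(1+v)(1+u)(1+v) \equiv 1 + [u,v] \pmod{U_{\RCK, 3}}$, where $[u,v]=uv+vu$ in the $\Z_2$-algebra $U_\RCK$, through the three nested commutators yields
\[
  \mu((v_3, v_4, v_2, v_1)) \equiv 1 + [[[v_3, v_4], v_2], v_1] \pmod{U_{\RCK, 5}},
\]
and expanding the iterated Lie bracket as an associative sum gives
\begin{align*}
\mu((v_3,v_4,v_2,v_1))^4 =\; &v_3v_4v_2v_1 + v_4v_3v_2v_1 + v_2v_3v_4v_1 + v_2v_4v_3v_1 \\
&+ v_1v_3v_4v_2 + v_1v_4v_3v_2 + v_1v_2v_3v_4 + v_1v_2v_4v_3.
\end{align*}

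The main task, and the principal obstacle, is to verify that this sum is nonzero modulo the algebra relations. Since every summand is multilinear, the relations $v_i^2=0$ play no role; only the three commutation relations coming from the edges $\{v_3,v_1\}$, $\{v_1,v_4\}$, $\{v_4,v_2\}$ can identify distinct monomials. Following the tactic of the previous proposition, one singles out the term $v_4v_3v_2v_1$: its neighbouring pairs $\{v_4,v_3\}, \{v_3,v_2\}, \{v_2,v_1\}$ are all non-edges of $\sK$, so this monomial is rigid, and one verifies directly that no sequence of edge-swaps transforms any of the other seven summands into $v_4v_3v_2v_1$. Hence $v_4v_3v_2v_1$ survives uncancelled, $\mu((v_3,v_4,v_2,v_1))^4 \neq 0$, and Corollary~\ref{muk} delivers $(v_3, v_4, v_2, v_1) \notin \gamma_5(\RCK)$. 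The combinatorial check is most cleanly organised by the observation that equivalence classes of multilinear monomials in $v_1,v_2,v_3,v_4$ modulo the commutation relations are parametrised by the induced linear orders on the three non-edges $\{v_1,v_2\}, \{v_2,v_3\}, \{v_3,v_4\}$, giving exactly $2^3=8$ classes; the eight monomials in the expansion above happen to realise each class exactly once.
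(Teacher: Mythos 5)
Your proof is correct and takes essentially the same approach as the paper: compute the degree-$4$ component of the Magnus image and exhibit a multilinear monomial all of whose adjacent pairs are non-edges, so that it admits no rewriting and cannot cancel. The only (harmless) difference is that you expand $\mu((v_3,v_4,v_2,v_1))$ directly, while the paper first replaces the commutator by $((v_1,v_2),(v_3,v_4))$ modulo $\gamma_5$ and expands that; each version produces eight monomials lying in pairwise distinct commutation classes, with the rigid representative being $v_4v_3v_2v_1$ in your computation and $v_1v_2v_3v_4$ in the paper's.
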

\begin{proof}
	We have $( v_{3}, v_{4}, v_{2}, v_{1} ) = ( (v_{3}, v_{4}), (v_{1}, v_{2}) ) \operatorname{mod} \gamma_5$. Therefore, it suffices to perform the
calculations for the commutator $( (v_{1}, v_{2}) ,  (v_{3}, v_{4}) )$. We have
	\begin{align*}
		\mu(( (v_{1}, v_{2}) ,  (v_{3}, v_{4}) )) =  1 +& 
		v_{{1}} v_{{2}} v_{{3}} v_{{4}} + v_{{1}} v_{{2}} v_{{4}} v_{{3}} + v_{{2}} v_{{1}} v_{{3}} v_{{4}} + v_{{2}} v_{{1}} v_{{4}} v_{{3}} \\ 
		+&v_{{3}} v_{{4}} v_{{1}} v_{{2}} + v_{{3}} v_{{4}} v_{{2}} v_{{1}} + v_{{4}} v_{{3}} v_{{1}} v_{{2}} + v_{{4}} v_{{3}} v_{{2}} v_{{1}} + U_{\RCK, 5}.
	\end{align*}
	Repeating the argument in the preceding proposition, it suffices to note that the monomial $v_{{1}} v_{{2}} v_{{3}} v_{{4}} $ 
	can
only be written in this unique way and hence cannot be cancelled.	
\end{proof}

\section{Main results}

\begin{proposition}\label{coxeter4pointsgens}
Let $\sK$ be a simplicial complex on the set $[4]=\{1,2,3,4\}$.
\begin{enumerate}[label=(\alph*)]
\item If $\sK$ contains all edges $\{1, 2\}, \{1, 3\}, \{1, 4\}, \{2, 3\}, \{2, 4\}, \{3, 4\}$, then\\$L^4(\racg_\sK) = \{e\}$ and there are no generators.
\item If $\sK$ contains only five edges and the only missing edge is $\{i, j\}, i < j$, then $L^4(\racg_\sK) \cong \mathbb Z_2$ is minimally generated by the commutator $[\mu_j, \mu_i, \mu_i, \mu_i]$.
\item If $\sK$ contains only four edges and the missing edges are
\begin{itemize}
\item $\{i, k\}, \{j, l\}$, $k < i, l < j$, then $L^4(\racg_\sK)\cong\mathbb Z_2\oplus\mathbb Z_2$ is minimally generated by the two commutators $[\mu_i, \mu_k, \mu_k, \mu_k], [\mu_j, \mu_l, \mu_l, \mu_l]$;
\item $\{i, k\}, \{k, l\}$, $i < k < l$, then $L^4(\racg_\sK)\cong\mathbb Z_2^4$ is minimally generated by the four commutators $$[\mu_k, \mu_i, \mu_i, \mu_i],[\mu_l, \mu_k, \mu_k, \mu_k],[\mu_k, \mu_l, \mu_k, \mu_i],[\mu_k, \mu_l, \mu_i, \mu_k].$$
\end{itemize}
\item If $\sK$ contains only three edges
\begin{itemize}
\item $\{l, j\}, \{l, k\}$, $\{l, i\}$, $i < j < k$, then $L^4(\racg_\sK)\cong\mathbb Z_2^8$ is minimally generated by the eight commutators 
\begin{align*}
&[\mu_j, \mu_i, \mu_i, \mu_i],[\mu_k, \mu_i, \mu_i, \mu_i],[\mu_k, \mu_j, \mu_j, \mu_i],[\mu_k, \mu_i, \mu_j, \mu_i],
\\&
[\mu_k, \mu_i, \mu_i, \mu_j],
[\mu_k, \mu_j, \mu_j, \mu_j],
[\mu_k, \mu_j, \mu_i, \mu_j],
[\mu_k, \mu_i, \mu_j, \mu_k].
\end{align*}
\item $\{j, k\}, \{j, l\}, \{k, l\}$, $j < k < l$, then $L^4(\racg_\sK)\cong\mathbb Z_2^{10}$ is minimally generated by the ten commutators
\begin{align*}
&[\mu_i, \mu_k, \mu_i, \mu_j],[\mu_i, \mu_k, \mu_j, \mu_i],[\mu_i, \mu_j, \mu_j, \mu_j],[\mu_i, \mu_l, \mu_i, \mu_j],[\mu_i, \mu_l, \mu_j, \mu_i],\\
&[\mu_i, \mu_k, \mu_k, \mu_k],[\mu_l, \mu_i, \mu_i, \mu_i],[\mu_i, \mu_l, \mu_i, \mu_k],[\mu_i, \mu_l, \mu_k, \mu_i],[\mu_i, \mu_l, \mu_j, \mu_k].
\end{align*}
\item $\{i, j\}, \{j, k\}, \{k, l\}$, then $L^4(\racg_\sK)\cong\mathbb Z_2^8$ is minimally generated by the eight commutators 
\begin{align*}
&[\mu_k, \mu_i, \mu_k, \mu_k],[\mu_i, \mu_l, \mu_i, \mu_k],[\mu_i, \mu_l, \mu_k, \mu_i],[\mu_i, \mu_l, \mu_i, \mu_i],\\
&[\mu_l, \mu_j, \mu_l, \mu_l],[\mu_l, \mu_j, \mu_l, \mu_i],[\mu_l, \mu_j, \mu_i, \mu_l],[\mu_l, \mu_j, \mu_i, \mu_k].
\end{align*}
\end{itemize}
\item if $\sK$ contains only two edges
\begin{itemize}
\item $\{i, j\}$ and $\{i, l\}$, then $L^4(\racg_\sK)\cong\mathbb Z_2^{15}$ is minimally generated by the 15 commutators
\begin{align*}
&[\mu_l, \mu_k, \mu_k, \mu_j],[\mu_l, \mu_j, \mu_k, \mu_j],[\mu_l, \mu_j, \mu_j, \mu_k],[\mu_l, \mu_k, \mu_j, \mu_k],[\mu_l, \mu_j, \mu_k, \mu_l],\\
&[\mu_i, \mu_k, \mu_k, \mu_j],[\mu_i, \mu_k, \mu_j, \mu_k],[\mu_k, \mu_j, \mu_j, \mu_j],[\mu_l, \mu_j, \mu_j, \mu_j],[\mu_l, \mu_k, \mu_l, \mu_l],\\
&[\mu_k, \mu_i, \mu_k, \mu_k],[\mu_k, \mu_i, \mu_k, \mu_l],[\mu_k, \mu_i, \mu_l, \mu_k],[\mu_k, \mu_i, \mu_l, \mu_j],[\mu_k, \mu_i, \mu_j, \mu_l].
\end{align*}
\item $\{i, j\}$ and $\{k, l\}$, then $L^4(\racg_\sK)\cong\mathbb Z_2^{15}$ is minimally generated by the 15 commutators
\begin{align*}
&[\mu_k, \mu_j, \mu_k, \mu_i],
[\mu_k, \mu_j, \mu_i, \mu_k],
[\mu_k, \mu_i, \mu_k, \mu_k],
[\mu_i, \mu_l, \mu_i, \mu_k],
[\mu_i, \mu_l, \mu_k, \mu_i]\\
&[\mu_i, \mu_l, \mu_i, \mu_i],
[\mu_l, \mu_j, \mu_l, \mu_i],
[\mu_l, \mu_j, \mu_i, \mu_l],
[\mu_k, \mu_j, \mu_k, \mu_k],
[\mu_j, \mu_l, \mu_j, \mu_j]\\&
[\mu_j, \mu_l, \mu_j, \mu_k],
[\mu_j, \mu_l, \mu_k, \mu_j],
[\mu_i, \mu_l, \mu_k, \mu_j],
[\mu_j, \mu_l, \mu_i, \mu_k],
[\mu_j, \mu_l, \mu_k, \mu_i].
\end{align*}
\end{itemize}
\item If $\sK$ contains only one edge $\{i, j\}$, then $L^4(\racg_\sK)\cong\mathbb Z_2^{23}$ is minimally generated by the 23 commutators
\begin{align*}
&[\mu_i, \mu_l, \mu_l, \mu_k],~
[\mu_i, \mu_k, \mu_l, \mu_k],~
[\mu_i, \mu_k, \mu_k, \mu_l],~
[\mu_i, \mu_l, \mu_k, \mu_l],~
[\mu_i, \mu_k, \mu_l, \mu_i],\\&
[\mu_l, \mu_k, \mu_k, \mu_k],~
[\mu_j, \mu_l, \mu_l, \mu_k],~
[\mu_j, \mu_k, \mu_l, \mu_k],~
[\mu_j, \mu_k, \mu_k, \mu_l],~
[\mu_j, \mu_l, \mu_k, \mu_l],\\&
[\mu_j, \mu_k, \mu_l, \mu_j],~
[\mu_i, \mu_k, \mu_i, \mu_i],~
[\mu_k, \mu_j, \mu_k, \mu_k],~
[\mu_k, \mu_j, \mu_k, \mu_i],~
[\mu_k, \mu_j, \mu_i, \mu_k],\\&
[\mu_i, \mu_l, \mu_i, \mu_i],~
[\mu_l, \mu_j, \mu_l, \mu_l],~
[\mu_l, \mu_j, \mu_l, \mu_i],~
[\mu_l, \mu_j, \mu_i, \mu_l],\\&
[\mu_l, \mu_j, \mu_i, \mu_k],~
[\mu_k, \mu_j, \mu_i, \mu_l],~
[\mu_k, \mu_j, \mu_l, \mu_i],~
[\mu_l, \mu_j, \mu_k, \mu_i].
\end{align*}
\end{enumerate}
\end{proposition}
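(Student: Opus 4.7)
The plan is to argue case by case on the edge configuration of $\sK^1$, using three ingredients throughout. First, by Proposition~\ref{comb}, $\gamma_4(\racg_\sK)$ is generated by simple length-$4$ nested commutators in the generators $g_i$, so $L^4(\racg_\sK)$ is spanned by the images of these commutators, and by Proposition~\ref{kv} it is a $\mathbb Z_2$-vector space. Second, Lemma~\ref{comtogPV}, the Jacobi identity, and the commutation relations $[\mu_i,\mu_j]=0$ for $\{i,j\}\in\sK^1$ reduce this spanning set modulo $\gamma_5$ to the short list announced in each sub-case. Third, independence of the resulting list is verified via the Magnus mapping: by Corollary~\ref{muk}, a commutator whose image in $U_\RCK$ has a nonzero degree-$4$ component cannot lie in $\gamma_5(\racg_\sK)$, and we follow the model of Propositions~\ref{notgamma5elems} and~\ref{notgamma5elems2} to exhibit, for each proposed generator, a ``witness monomial'' of degree four that survives the relations $v_i^2=0$ and $v_iv_j+v_jv_i=0$ for $\{i,j\}\in\sK^1$.

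Many sub-cases reduce to previously computed situations via two standard graph manoeuvres. If $\sK^1$ has a vertex $v$ adjacent to every other vertex, then $\racg_\sK\cong\mathbb Z_2\langle g_v\rangle\times\racg_{\sK\setminus v}$ and the lower central series splits as a direct sum; if $\sK^1$ is disconnected with components $\sK'$ and $\sK''$, then $\racg_\sK\cong\racg_{\sK'}*\racg_{\sK''}$. Using these: case~(a) is abelian; case~(b) reduces to $\mathbb Z_2^2\times(\mathbb Z_2*\mathbb Z_2)$; the first sub-case of~(c) to $(\mathbb Z_2*\mathbb Z_2)\times(\mathbb Z_2*\mathbb Z_2)$; the second sub-case of~(c) has $j$ universal and falls to Theorem~\ref{commcox3}(c); the ``star'' sub-case of~(d) has $l$ universal and falls to Theorem~\ref{commcox3}(d); the ``triangle'' sub-case of~(d) is disconnected and matches Proposition~\ref{numbergens4onetriangle}; the disjoint-edges sub-case of~(e) matches Proposition~\ref{numbergens4twoedges}; and case~(f) matches Proposition~\ref{numbergens4oneedge}. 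In each reduced case it remains to rewrite the generators produced by the cited theorem in the specific nested form demanded here, which is done via the $\mathbb Z_2$-identity $[\mu_a,\mu_b,\mu_c,\mu_d]=[\mu_b,\mu_a,\mu_c,\mu_d]$ used to swap the first two entries when necessary.

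The genuinely new computations concern two non-decomposable configurations: the third sub-case of~(d), where $\sK^1$ is the path $i-j-k-l$, and the first sub-case of~(e), the ``cherry'' $\{i,j\},\{i,l\}$ with isolated vertex~$k$; note that the latter equals the free product $\racg_{\{i,j,l\}}*\mathbb Z_2\langle g_k\rangle$ whose $L^4$ is not additively split, and neither case is covered by Waldinger's work on free products of $\mathbb Z_2^n$'s. For each, we enumerate the simple length-$4$ nested commutators in the four generators that survive the commutation relations, apply Lemma~\ref{comtogPV} and Jacobi systematically to reduce modulo $\gamma_5$ to the announced spanning set, and verify independence by the Magnus argument sketched above. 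Since $L^4$ is a $\mathbb Z_2$-vector space, a matching lower bound from the Magnus witnesses then forces the reduced list to be a basis.

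The principal obstacle is the combinatorial bookkeeping in the independence step for the larger cases, notably~(e)(i) and~(f), where $15$ and $23$ commutators must be distinguished simultaneously. The task is to arrange the Magnus images in a $\mathbb Z_2$-matrix whose rows are the proposed generators and whose columns are witness monomials in $U_\RCK$, and to verify that this matrix has full rank; the witnesses must be coordinated so that no two are identified by the defining relations of~$U_\RCK$, and in the denser cases one must check that ``collision'' monomials arising from expanding $\mu((g_{i_1},g_{i_2},g_{i_3},g_{i_4}))$ do not spoil the row-independence. By contrast, the spanning side, although lengthy, is mechanical once Lemma~\ref{comtogPV} is in hand.
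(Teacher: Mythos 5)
Your case analysis, the reductions via universal vertices and disconnected graphs, and your identification of the path and the ``cherry'' as the two configurations requiring genuinely new work all agree with the paper. The spanning side of your plan (reduce the simple length-$4$ nested commutators modulo $\gamma_5$ using Lemma~\ref{comtogPV}, the Jacobi identity, and the commutation relations) is also essentially what the paper does, except that the paper seeds the reduction with the $32$-element set of Theorem~\ref{theorem_freeproduct4} and then repeatedly replaces the commutators supported on a triple of vertices by the minimal sets of Theorem~\ref{commcox3}, rather than starting from Proposition~\ref{comb}.

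The genuine gap is in your independence step. You propose to certify linear independence of each announced list by exhibiting, for every generator, a surviving degree-$4$ witness monomial in its Magnus image and checking that the resulting $\mathbb Z_2$-matrix has full rank. This cannot work, because for the generators with a triply repeated index the degree-$4$ Magnus component vanishes identically. Concretely, if $\{a,b\}\notin\sK$ and $c=(g_b,g_a)=g_bg_ag_bg_a$, then $g_acg_a=c^{-1}$ inside the subgroup $\mathbb Z_2\ast\mathbb Z_2$ generated by $g_a,g_b$, whence $(c,g_a)=c^{-2}$ and $(g_b,g_a,g_a,g_a)=c^{4}$; since $\mu(c)=1+(v_av_b+v_bv_a)+\cdots$ and the algebra has characteristic $2$, one gets $\mu(c^{2})\in 1+U^{\infty}_{\RCK,4}$ and $\mu(c^{4})=\mu(c^2)^2\in 1+U^{\infty}_{\RCK,8}$, so every graded component of $\mu(c^4)-1$ in degree below $8$ is zero even though $[\mu_b,\mu_a,\mu_a,\mu_a]$ is the nonzero generator of $L^4(\mathbb Z_2\ast\mathbb Z_2)\cong\mathbb Z_2$. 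Thus Corollary~\ref{muk} gives no information about such elements, there is no witness monomial, and your matrix cannot have full rank in any of the cases (b)--(f), each of which contains commutators of this shape. This is precisely why the paper uses the Magnus map only for the commutators with four pairwise distinct indices (Propositions~\ref{notgamma5elems} and~\ref{notgamma5elems2}), whose degree-$4$ images do survive, and obtains the remaining lower bounds from Waldinger's dimension counts (Propositions~\ref{numbergens3}--\ref{numbergens4}) together with retractions onto full subcomplexes on three vertices and the observation that commutators supported on at most three letters expand into monomials missing a letter, hence cannot produce the four-letter monomials. You would need to import those counts, or the retraction argument, to close the minimality step; the graded Magnus map alone is not faithful on $L^4(\racg_\sK)$.
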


\begin{proof}
It follows from Theorem~\ref{theorem_freeproduct4} that in all the cases the union of the sets
\begin{align}
\nonumber
\overline{A_1} = \{&[\mu_3, \mu_2, \mu_2, \mu_1], [\mu_3, \mu_1, \mu_2, \mu_1], [\mu_3, \mu_1, \mu_1, \mu_2],\\
\nonumber
&[\mu_3, \mu_2, \mu_1, \mu_2], [\mu_3, \mu_1, \mu_2, \mu_3] \},\\
\nonumber
\overline{A_2} = \{&[\mu_2, \mu_1, \mu_1, \mu_1], [\mu_4, \mu_2, \mu_2, \mu_1], [\mu_4, \mu_1, \mu_2, \mu_1],\\&
\nonumber
[\mu_4, \mu_1, \mu_1, \mu_2], [\mu_4, \mu_2, \mu_1, \mu_2], [\mu_4, \mu_1, \mu_2, \mu_4] \},\\
\label{temp_setcomm4}
\overline{A_3} = \{&[\mu_3, \mu_1, \mu_1, \mu_1], [\mu_4, \mu_1, \mu_1, \mu_1], [\mu_4, \mu_3, \mu_3, \mu_1], [\mu_4, \mu_1, \mu_3, \mu_1],\\
\nonumber
&[\mu_4, \mu_1, \mu_1, \mu_3], [\mu_4, \mu_3, \mu_1, \mu_3], [\mu_4, \mu_1, \mu_3, \mu_4] \},\\
\nonumber
\overline{A_4} = \{&[\mu_3, \mu_2, \mu_2, \mu_2], [\mu_4, \mu_2, \mu_2, \mu_2], [\mu_4, \mu_3, \mu_3, \mu_2], [\mu_4, \mu_2, \mu_3, \mu_2],\\
\nonumber
&[\mu_4, \mu_2, \mu_2, \mu_3], [\mu_4, \mu_3, \mu_3, \mu_3], [\mu_4, \mu_3, \mu_2, \mu_3], [\mu_4, \mu_2, \mu_3, \mu_4] \},\\
\nonumber
\overline{B} = \{&[\mu_2, \mu_4, \mu_3, \mu_1], [\mu_1, \mu_4, \mu_3, \mu_2], [\mu_1, \mu_4, \mu_2, \mu_3], [\mu_2, \mu_4, \mu_1, \mu_3],\\
\nonumber
&[\mu_3, \mu_4, \mu_1, \mu_2], [\mu_3, \mu_4, \mu_2, \mu_1] \},
\end{align}

%\begin{gather}\label{temp_setcomm4}
%\overline{A_1} = \{ [\mu_3, \mu_2, \mu_2, \mu_1], [\mu_3, \mu_1, \mu_2, \mu_1], [\mu_3, \mu_1, \mu_1, \mu_2],\nonumber\\
%[\mu_3, \mu_2, \mu_1, \mu_2], [\mu_3, \mu_1, \mu_2, \mu_3] \}.\nonumber\\
%\overline{A_2} = \{ [\mu_2, \mu_1, \mu_1, \mu_1], [\mu_4, \mu_2, \mu_2, \mu_1], [\mu_4, \mu_1, \mu_2, \mu_1],\nonumber\\
%[\mu_4, \mu_1, \mu_1, \mu_2], [\mu_4, \mu_2, \mu_1, \mu_2], [\mu_4, \mu_1, \mu_2, \mu_4] \},\nonumber\\
%\overline{A_3} = \{ [\mu_3, \mu_1, \mu_1, \mu_1], [\mu_4, \mu_1, \mu_1, \mu_1], [\mu_4, \mu_3, \mu_3, \mu_1], [\mu_4, \mu_1, \mu_3, \mu_1],\\
%[\mu_4, \mu_1, \mu_1, \mu_3], [\mu_4, \mu_3, \mu_1, \mu_3], [\mu_4, \mu_1, \mu_3, \mu_4] \},\nonumber\\
%\overline{A_4} = \{ [\mu_3, \mu_2, \mu_2, \mu_2], [\mu_4, \mu_2, \mu_2, \mu_2], [\mu_4, \mu_3, \mu_3, \mu_2], [\mu_4, \mu_2, \mu_3, \mu_2],\nonumber\\
%[\mu_4, \mu_2, \mu_2, \mu_3], [\mu_4, \mu_3, \mu_3, \mu_3], [\mu_4, \mu_3, \mu_2, \mu_3], [\mu_4, \mu_2, \mu_3, \mu_4] \},\nonumber\\
%\overline{B} = \{ [\mu_2, \mu_4, \mu_3, \mu_1], [\mu_1, \mu_4, \mu_3, \mu_2], [\mu_1, \mu_4, \mu_2, \mu_3], [\mu_2, \mu_4, \mu_1, \mu_3],\nonumber\\
%[\mu_3, \mu_4, \mu_1, \mu_2], [\mu_3, \mu_4, \mu_2, \mu_1] \},\nonumber
%\end{gather}
generates $L^4(\racg_\sK)$ (not necessarily minimally and, in fact, never minimally).

In item (a), in the presence of all edges, all elements in the union become zero, which implies the
desired assertion.

In item (b), consider the case in which the edge $\{1, 2\}$ is missing (i.e., $i = 1$ and $j = 2$). Then all the
generators except for $[\mu_2, \mu_1, \mu_1, \mu_1]$ are zero, which implies that the assertion is true in this case. Since
one can always arbitrarily change the numbering of vertices, it follows that for any missing edge one has $L^4(\racg_\sK) \cong \mathbb Z_2$, and one can take any nonzero commutator from this union, which implies the desired
assertion.

In item (c), two cases of the simplicial complex (graph) are possible up to an isomorphism:
\begin{center}
\scalebox{1.5}{
\begin{picture}(10,10)
\put(0,2){\circle*{1}}
\put(5,2){\circle*{1}}
\put(0,7){\circle*{1}}
\put(5,7){\circle*{1}}
\put(0,2){\line(1,0){5}}
\put(0,2){\line(0,1){5}}
\put(5,7){\line(-1,0){5}}
\put(5,7){\line(0,-1){5}}
\put(-2,-0.5){\scriptsize i}
\put(-2,8){\scriptsize j}
\put(5.5,8){\scriptsize k}
\put(5.5,-0.5){\scriptsize l}
\end{picture}}\;\;\;\;\;\;\;\;\;\;\;\;
\scalebox{1.5}{
\begin{picture}(10,10)
\put(0,2){\circle*{1}}
\put(5,2){\circle*{1}}
\put(0,7){\circle*{1}}
\put(5,7){\circle*{1}}
\put(0,2){\line(1,0){5}}
\put(0,2){\line(0,1){5}}
\put(0,7){\line(1,-1){5}}
\put(5,7){\line(-1,0){5}}
\put(-2,-0.5){\scriptsize i}
\put(-2,8){\scriptsize j}
\put(5.5,8){\scriptsize k}
\put(5.5,-0.5){\scriptsize l}
\end{picture}}
\end{center}
In the first case, the simplicial complex $\sK$ is the join of two discrete simplicial complexes on two points ($\{i, k\}$ and $\{j, l\}$), and therefore $\racg_\sK \cong (\mathbb Z_2 \ast \mathbb Z_2) \oplus (\mathbb Z_2 \ast \mathbb Z_2)$ by~\ref{coxfund} and~\cite[Propositions 4.1.3 and 4.2.5]{bu-pa15}. The lower central seres of the group $\mathbb{Z}_2 \ast \mathbb{Z}_2$ has the following form: $\gamma_1 (\mathbb{Z}_2 \ast \mathbb{Z}_2) = \mathbb{Z}_2 \ast \mathbb{Z}_2$, and for $k \ge 2$ $\gamma_k (\mathbb{Z}_2 \ast \mathbb{Z}_2) \cong \mathbb{Z}$ is the infinite cyclic group generated by the commutator $(g_1, g_2, g_1, \ldots, g_1)$ of length $k$. From Proposition~\ref{kv},we obtain $$\gamma_k(\mathbb{Z}_2 \ast \mathbb{Z}_2) / \gamma_{k+1}(\mathbb{Z}_2 \ast \mathbb{Z}_2) = \mathbb{Z}_2$$ for all $k > 1$, while $\gamma_1(\mathbb{Z}_2 \ast \mathbb{Z}_2) / \gamma_2(\mathbb{Z}_2 \ast \mathbb{Z}_2) = \mathbb{Z}_2 \oplus \mathbb{Z}_2$. It follows that $$\gamma_4(\racg_\sK) / \gamma_5(\racg_\sK) = L^4(\racg_\sK) \cong \mathbb Z_2 \oplus \mathbb Z_2$$ is minimally generated by two commutators $[\mu_i, \mu_k, \mu_k, \mu_k], [\mu_j, \mu_l, \mu_l, \mu_l]$.

In the second case, we add the simplex $\{i, j, l\}$ to the simplicial comples $\sK$, which does not change the graph $\sK^1$ (note that the group $\racg_\sK$ depends only on $\sK^1$). Then the resulting simplicial complex $s\K'$ satisfies $\sK' \cong \{j\} \ast \sK_{\{i, l, k\}}$, whence $\racg_\sK \cong \mathbb Z_2 \oplus \racg_{\sK_{\{i, l, k\}}}$ by Theorem~\ref{coxfund} and~\cite[Propositions 4.1.3 and 4.2.5]{bu-pa15}. Since $\gamma_4(\mathbb Z_2) \cong \{e\}$, it follows that $L^4(\racg_\sK)\cong L^4(\racg_{\sK_{\{i, l, k\}}})\cong\mathbb Z_2^4$ is minimally generated by the four
commutators $[\mu_i, \mu_k, \mu_i, \mu_i], [\mu_k, \mu_l, \mu_k, \mu_k], [\mu_k, \mu_l, \mu_k, \mu_i], [\mu_k, \mu_l, \mu_i, \mu_k]$ by Theorem~\ref{commcox3}.

In item (d), three cases of the simplicial complex (graph) are possible up to an isomorphism:
\begin{center}
\scalebox{1.5}{
\begin{picture}(10,10)
\put(0,2){\circle*{1}}
\put(5,2){\circle*{1}}
\put(0,7){\circle*{1}}
\put(5,7){\circle*{1}}
\put(0,2){\line(1,0){5}}
\put(0,2){\line(0,1){5}}
\put(0,2){\line(1,1){5}}
\put(-2,-0.5){\scriptsize l}
\put(-2,8){\scriptsize j}
\put(5.5,8){\scriptsize k}
\put(5.5,-0.5){\scriptsize i}
\end{picture}}\;\;\;\;\;\;\;\;\;\;\;\;
\scalebox{1.5}{
\begin{picture}(10,10)
\put(0,2){\circle*{1}}
\put(5,2){\circle*{1}}
\put(0,7){\circle*{1}}
\put(5,7){\circle*{1}}
\put(0,2){\line(1,0){5}}
\put(0,2){\line(0,1){5}}
\put(5,2){\line(-1,1){5}}
\put(-2,-0.5){\scriptsize i}
\put(-2,8){\scriptsize j}
\put(5.5,8){\scriptsize k}
\put(5.5,-0.5){\scriptsize l}
\end{picture}}\;\;\;\;\;\;\;\;\;\;\;\;
\scalebox{1.5}{
\begin{picture}(10,10)
\put(0,2){\circle*{1}}
\put(5,2){\circle*{1}}
\put(0,7){\circle*{1}}
\put(5,7){\circle*{1}}
\put(0,2){\line(0,1){5}}
\put(5,7){\line(-1,0){5}}
\put(5,7){\line(0,-1){5}}
\put(-2,-0.5){\scriptsize i}
\put(-2,8){\scriptsize j}
\put(5.5,8){\scriptsize k}
\put(5.5,-0.5){\scriptsize l}
\end{picture}}
\end{center}
In the first case, the simplicial complex $\sK$ is the joint of the point $\{l\}$ and the discrete simplicial complex on the three points $\{i, j, k\}$,  whence, as in the above argument, $L^4(\racg_\sK)\cong L^4(\racg_{\sK_{\{i, j, k\}}})\cong\mathbb Z_2^8$ by Corollary~\ref{cor_freeproduct3}, and $L^4(\racg_\sK)$ is minimally generated by the commutators
\begin{align*}
&[\mu_j, \mu_i, \mu_i, \mu_i],~[\mu_k, \mu_i, \mu_i, \mu_i],~[\mu_k, \mu_j, \mu_j, \mu_i],~[\mu_k, \mu_i, \mu_j, \mu_i],\\
&
[\mu_k, \mu_i, \mu_i, \mu_j],~[\mu_k, \mu_j, \mu_j, \mu_j], ~[\mu_k, \mu_j, \mu_i, \mu_j],~[\mu_k, \mu_i, \mu_j, \mu_k],
\end{align*}
which implies the desired assertion.

In the second case, first, we take $i = 2, j = 4, k = 3, l = 1$. Then the first four commutators in the subset $\overline{B}$ of the set~\eqref{temp_setcomm4} are zero, the remaining commutators being $[\mu_3, \mu_4, \mu_1, \mu_2]$ and $[\mu_3, \mu_4, \mu_2, \mu_1]$. Further, $(g_3, g_4, g_1, g_2)\equiv((g_2, g_1), (g_3, g_4))(g_3, g_4, g_2, g_1)\mod\gamma_5(\racg_\sK) = (g_3, g_4, g_2, g_1)$ (beacuse $(g_1, g_2) = e$);hence we can retain any of these two commutators. We take $[\mu_3, \mu_4, \mu_2, \mu_1]$. Owing to the symmetry of the indices, we see that in any case there remains only one explicitly nonzero commutator with four pairwise distinct indices. The fact that it is not zero will be proved below.

\vspace{0.1cm}
Now we take $i = 2, j = 3, k = 1, l = 4$:
\begin{center}
\scalebox{1.5}{
\begin{picture}(6,7)
\put(0,2){\circle*{1}}
\put(5,2){\circle*{1}}
\put(0,7){\circle*{1}}
\put(5,7){\circle*{1}}
\put(0,2){\line(1,0){5}}
\put(0,2){\line(0,1){5}}
\put(5,2){\line(-1,1){5}}
\put(-2,-0.5){\scriptsize 2}
\put(-2,8){\scriptsize 3}
\put(5.5,8){\scriptsize 1}
\put(5.5,-0.5){\scriptsize 4}
\end{picture}}\end{center}
 Then the subset $\overline{A_4}$ of the set~\eqref{temp_setcomm4} of commutators becomes empty (all commutators in it are zero). In the set $\overline{B}$, we retain only one commutator $[\mu_1, \mu_4, \mu_2, \mu_3]$. We also remove all commutators that are
explicitly zero and gather all commutators with indices in the set $\{1, 2, 3\}$ into a separate set $A'_1$. The following set of commutators remains:
\begin{align*}
A_1'&= \{ [\mu_3, \mu_1, \mu_2, \mu_1], [\mu_3, \mu_1, \mu_1, \mu_2], [\mu_3, \mu_1, \mu_2, \mu_3], [\mu_2, \mu_1, \mu_1, \mu_1], [\mu_3, \mu_1, \mu_1, \mu_1] \},\\
\overline{A_2}&= \{ [\mu_4, \mu_1, \mu_2, \mu_1], [\mu_4, \mu_1, \mu_1, \mu_2], [\mu_4, \mu_1, \mu_2, \mu_4] \},
\\
\overline{A_3}&= \{ [\mu_4, \mu_1, \mu_1, \mu_1], [\mu_4, \mu_1, \mu_3, \mu_1], [\mu_4, \mu_1, \mu_1, \mu_3], [\mu_4, \mu_1, \mu_3, \mu_4] \},
\\\overline{B}&= \{ [\mu_1, \mu_4, \mu_2, \mu_3] \}.
\end{align*}
By Theorem~\ref{commcox3}, all commutators in the set $A_1'$ can be replaced by the four commutators $[\mu_2, \mu_1, \mu_2, \mu_2], [\mu_1, \mu_3, \mu_1, \mu_1], [\mu_1, \mu_3, \mu_1, \mu_2], [\mu_1, \mu_3, \mu_2, \mu_1]$. After this, we gather all commutators with indices
in $\{1, 2, 4\}$ into a set $A_2'$,
\begin{align*}
A_1'&= \{ [\mu_1, \mu_3, \mu_1, \mu_1], [\mu_1, \mu_3, \mu_1, \mu_2], [\mu_1, \mu_3, \mu_2, \mu_1] \},
\\A_2'&= \{ [\mu_4, \mu_1, \mu_2, \mu_1], [\mu_2, \mu_1, \mu_2, \mu_2], [\mu_4, \mu_1, \mu_1, \mu_2], [\mu_4, \mu_1, \mu_2, \mu_4], [\mu_4, \mu_1, \mu_1, \mu_1] \},
\\\overline{A_3}&= \{ [\mu_4, \mu_1, \mu_3, \mu_1], [\mu_4, \mu_1, \mu_1, \mu_3], [\mu_4, \mu_1, \mu_3, \mu_4] \},
\\\overline{B}&= \{ [\mu_1, \mu_4, \mu_2, \mu_3] \}.
\end{align*}
In a similar way, we see that all commutators in the set $A_2'$ can be replaced by the four commutators $[\mu_2, \mu_1, \mu_2, \mu_2], [\mu_1, \mu_4, \mu_1, \mu_1], [\mu_1, \mu_4, \mu_1, \mu_2], [\mu_1, \mu_4, \mu_2, \mu_1]$. After this, we gather all commutators with indices in $\{1, 3, 4\}$ into a set $A_3'$,
\begin{align*}
A_1'&= \{ [\mu_1, \mu_3, \mu_1, \mu_2], [\mu_1, \mu_3, \mu_2, \mu_1] \},
\\A_2'&= \{ [\mu_2, \mu_1, \mu_2, \mu_2], [\mu_1, \mu_4, \mu_1, \mu_2], [\mu_1, \mu_4, \mu_2, \mu_1] \},
\\A_3'&= \{ [\mu_1, \mu_3, \mu_1, \mu_1], [\mu_4, \mu_1, \mu_3, \mu_1], [\mu_4, \mu_1, \mu_1, \mu_3], [\mu_4, \mu_1, \mu_3, \mu_4], [\mu_1, \mu_4, \mu_1, \mu_1] \},
\\\overline{B}&= \{ [\mu_1, \mu_4, \mu_2, \mu_3] \}.
\end{align*}
In a similar way, we see that all commutators in the set $A_3'$ can be replaced by the four commutators $[\mu_3, \mu_1, \mu_3, \mu_3], [\mu_1, \mu_4, \mu_1, \mu_1], [\mu_1, \mu_4, \mu_1, \mu_3], [\mu_1, \mu_4, \mu_3, \mu_1]$. We obtain the following set of commutators:
\begin{align*}
A_1'&= \{ [\mu_1, \mu_3, \mu_1, \mu_2], [\mu_1, \mu_3, \mu_2, \mu_1] \},
\\A_2'&= \{ [\mu_2, \mu_1, \mu_2, \mu_2], [\mu_1, \mu_4, \mu_1, \mu_2], [\mu_1, \mu_4, \mu_2, \mu_1] \},
\\A_3'&= \{ [\mu_3, \mu_1, \mu_3, \mu_3], [\mu_1, \mu_4, \mu_1, \mu_1], [\mu_1, \mu_4, \mu_1, \mu_3], [\mu_1, \mu_4, \mu_3, \mu_1] \},
\\\overline{B}&= \{ [\mu_1, \mu_4, \mu_2, \mu_3] \}.
\end{align*}
This set of commutators is minimal by Proposition~\ref{numbergens4onetriangle}. By the symmetry of the indices, we see that, in
the absence of the edges $\{i, j\}, \{i, k\}, \{i, l\}$ the set
\begin{align*}
A_i'&= \{ [\mu_i, \mu_k, \mu_i, \mu_j], [\mu_i, \mu_k, \mu_j, \mu_i] \},
\\A_j'&= \{ [\mu_i, \mu_j, \mu_j, \mu_j], [\mu_i, \mu_l, \mu_i, \mu_j], [\mu_i, \mu_l, \mu_j, \mu_i] \},
\\A_k'&= \{ [\mu_i, \mu_k, \mu_k, \mu_k], [\mu_l, \mu_i, \mu_i, \mu_i], [\mu_i, \mu_l, \mu_i, \mu_k], [\mu_i, \mu_l, \mu_k, \mu_i] \},
\\
\overline{B}&= \{ [\mu_i, \mu_l, \mu_j, \mu_k] \}
\end{align*}
of commutators minimally generates $L^4(\racg_\sK)\cong\mathbb Z_2^{10}$.

In the third case, we take $i = 2, j = 4, k = 1, l = 3$:
\begin{center}
\scalebox{1.5}{
\begin{picture}(6,9)
\put(0,2){\circle*{1}}
\put(5,2){\circle*{1}}
\put(0,7){\circle*{1}}
\put(5,7){\circle*{1}}
\put(0,2){\line(0,1){5}}
\put(5,7){\line(-1,0){5}}
\put(5,7){\line(0,-1){5}}
\put(-2,-0.5){\scriptsize 2}
\put(-2,8){\scriptsize 4}
\put(5.5,8){\scriptsize 1}
\put(5.5,-0.5){\scriptsize 3}
\end{picture}}
\end{center} Then the first four commutators in the subset $\overline{B}$ of the set~\eqref{temp_setcomm4} are zero, the remaining commutators
being $[\mu_3, \mu_4, \mu_1, \mu_2]$ and $[\mu_3, \mu_4, \mu_2, \mu_1]$. By the Jacobi identity,
$$[\mu_3, \mu_4, \mu_1, \mu_2] = [\mu_4, \mu_1, \mu_3, \mu_2] + [\mu_1, \mu_3, \mu_4, \mu_2] = 0 + 0 = 0.$$ This leaves us with the only explicitly nonzero commutator $[\mu_3, \mu_4, \mu_2, \mu_1]$. In view of the symmetry of
the indices, we see that in any case there is only one explicitly nonzero commutator with four pairwise
distinct indices, namely, $[\mu_l, \mu_j, \mu_i, \mu_k]$. The fact that it is nonzero will be proved below. Further, we
remove the commutators that are explicitly zero from the resulting set of commutators and gather all
commutators with indices in $\{1, 2, 3\}$ into a set $A'_1$. The set of remaining commutators is
\begin{align*}
A_1'&= \{ [\mu_3, \mu_2, \mu_2, \mu_1], [\mu_2, \mu_1, \mu_1, \mu_1], [\mu_3, \mu_2, \mu_1, \mu_2], [\mu_3, \mu_2, \mu_2, \mu_2] \},
\\\overline{A_3}&= \{ [\mu_4, \mu_3, \mu_3, \mu_1], [\mu_4, \mu_3, \mu_1, \mu_3] \},
\\\overline{A_4}&= \{ [\mu_4, \mu_3, \mu_3, \mu_2], [\mu_4, \mu_3, \mu_3, \mu_3], [\mu_4, \mu_3, \mu_2, \mu_3] \},
\\\overline{B}&= \{ [\mu_3, \mu_4, \mu_2, \mu_1] \}.
\end{align*}
By Theorem~\ref{commcox3}, all commutators in the set $A'_1$ can be replaced by the four commutators $[\mu_1, \mu_2, \mu_1, \mu_1], [\mu_2, \mu_3, \mu_2, \mu_2], [\mu_2, \mu_3, \mu_1, \mu_2], [\mu_2, \mu_3, \mu_1, \mu_2]$. There is only one commutator with indices $\{1, 2, 4\}$,
just as claimed in Theorem~\ref{commcox3}. We gather all commutators with indices in $\{1, 3, 4\}$ into a set $A_3'$:
\begin{align*}
A_1'&= \{ [\mu_1, \mu_2, \mu_1, \mu_1], [\mu_2, \mu_3, \mu_2, \mu_2], [\mu_2, \mu_3, \mu_2, \mu_1], [\mu_2, \mu_3, \mu_1, \mu_2] \},
\\A_3'&= \{ [\mu_4, \mu_3, \mu_3, \mu_1], [\mu_4, \mu_3, \mu_1, \mu_3], [\mu_4, \mu_3, \mu_3, \mu_3] \},
\\\overline{A_4}&= \{ [\mu_4, \mu_3, \mu_3, \mu_2], [\mu_4, \mu_3, \mu_2, \mu_3] \},
\\\overline{B}&= \{ [\mu_3, \mu_4, \mu_2, \mu_1] \}.
\end{align*}
In a similar way, we see that all commutators in the set $A_3'$ can be replaced by the single commutator $[\mu_4, \mu_3, \mu_3, \mu_3]$, after which we gather all commutators with indices in $\{2, 3, 4\}$ into a set $A'_4$,
\begin{align*}
A_1'&= \{ [\mu_1, \mu_2, \mu_1, \mu_1], [\mu_2, \mu_3, \mu_2, \mu_1], [\mu_2, \mu_3, \mu_1, \mu_2] \},
\\A_4'&= \{ [\mu_4, \mu_3, \mu_3, \mu_2], [\mu_4, \mu_3, \mu_2, \mu_3], [\mu_4, \mu_3, \mu_3, \mu_3], [\mu_2, \mu_3, \mu_2, \mu_2] \},
\\\overline{B}&= \{ [\mu_3, \mu_4, \mu_2, \mu_1] \}.
\end{align*}
In a similar way, we replace all commutators in the set $A_4'$ with the four commutators $[\mu_2, \mu_3, \mu_2, \mu_2], [\mu_3, \mu_4, \mu_3, \mu_3], [\mu_3, \mu_4, \mu_3, \mu_2], [\mu_3, \mu_4, \mu_2, \mu_3]$. We obtain the following set of commutators:
\begin{align*}
A_1'&= \{ [\mu_1, \mu_2, \mu_1, \mu_1], [\mu_2, \mu_3, \mu_2, \mu_1], [\mu_2, \mu_3, \mu_1, \mu_2] \},
\\A_4'&= \{ [\mu_2, \mu_3, \mu_2, \mu_2], [\mu_3, \mu_4, \mu_3, \mu_3], [\mu_3, \mu_4, \mu_3, \mu_2], [\mu_3, \mu_4, \mu_2, \mu_3] \},
\\\overline{B}&= \{ [\mu_3, \mu_4, \mu_2, \mu_1] \}.
\end{align*}
This set of commutators is minimal. Indeed, the assumption that some of the commutators can be
removed from the set $A_i'$ contradicts Theorem~\ref{commcox3} (because we would have three points for which the
system of generators is less than the minimal system in this theorem). Expanding the commutators
in $A_i'$, we see that they contain monomials with only two or three indices. Expanding the commutator
in $\overline{B}$ , we obtain monomials containing all four indices. It follows that in $L^4(\racg_\sK)$ it is impossible to
express the commutator in $\overline{B}$ via the commutators in $A_1'$ and $A_4'$. Indeed, the algebra is over $\mathbb Z_2$, and
should there exist an expression of this kind, it would be in the form of a sum of commutators, which
is impossible, because a sum of monomials containing less than four distinct indices cannot be equal
to a sum of monomials containing four distinct indices. It remains to prove that $[\mu_3, \mu_4, \mu_2, \mu_1] \neq 0$. Since $\racg_\sK'$ is a free group (see~\cite[Theorem 4.3]{pa-ve}) and hence all terms of the lower central series are free groups, to prove that a commutator is not zero, it suffices to show that the commutator corresponding
to it in $\gamma_2(\racg_\sK)$ is not zero and does not lie in $\gamma_5(\racg_\sK)$. Multiplying out, one can readily verify that $$(g_3, g_4, g_2, g_1) = (g_3, g_4, g_2)^{-1}  (g_3, g_4)^{-1} (g_1, g_2) (g_3, g_4) (g_3, g_4, g_2) (g_1, g_2)^{-1} \neq e$$ in $\gamma_2$, because this is the expression via the basis of $\gamma_2(\racg_\sK)$ (see~\cite[Theorem 4.5]{pa-ve}). Further, it does not lie in $\gamma_5$ (see Proposition~\ref{notgamma5elems2}). It follows that the commutator is nonzero in the algebra.

In view of the symmetry of the indices, we see that in the presence of the edges $\{i, j\}, \{j, k\}, \{k, l\}$ the set
\begin{align*}
A_k'&= \{ [\mu_k, \mu_i, \mu_k, \mu_k], [\mu_i, \mu_l, \mu_i, \mu_k], [\mu_i, \mu_l, \mu_k, \mu_i] \},
\\A_j'&= \{ [\mu_i, \mu_l, \mu_i, \mu_i], [\mu_l, \mu_j, \mu_l, \mu_l], [\mu_l, \mu_j, \mu_l, \mu_i], [\mu_l, \mu_j, \mu_i, \mu_l] \},
\\\overline{B}&= \{ [\mu_l, \mu_j, \mu_i, \mu_k] \}
\end{align*}
of commutators minimally generates $L^4(\racg_\sK)\cong\mathbb Z_2^8$.

In item (e), the following two versions of the simplicial complex (graph) are possible up to an isomorphism:
\begin{center}
\scalebox{1.5}{
\begin{picture}(10,10)
\put(0,2){\circle*{1}}
\put(5,2){\circle*{1}}
\put(0,7){\circle*{1}}
\put(5,7){\circle*{1}}
\put(0,2){\line(1,0){5}}
\put(0,2){\line(0,1){5}}
\put(-2,-0.5){\scriptsize i}
\put(-2,8){\scriptsize j}
\put(5.5,8){\scriptsize k}
\put(5.5,-0.5){\scriptsize l}
\end{picture}}\;\;\;\;\;\;\;\;\;\;\;\;
\scalebox{1.5}{
\begin{picture}(10,10)
\put(0,2){\circle*{1}}
\put(5,2){\circle*{1}}
\put(0,7){\circle*{1}}
\put(5,7){\circle*{1}}
\put(0,2){\line(0,1){5}}
\put(5,7){\line(0,-1){5}}
\put(-2,-0.5){\scriptsize i}
\put(-2,8){\scriptsize j}
\put(5.5,8){\scriptsize k}
\put(5.5,-0.5){\scriptsize l}
\end{picture}}
\end{center}

\vspace{0.1cm}
In the first case, we take $i = 4, j = 1, k = 2, l = 3$,
\begin{center}
\scalebox{1.5}{
\begin{picture}(6,7)
\put(0,2){\circle*{1}}
\put(5,2){\circle*{1}}
\put(0,7){\circle*{1}}
\put(5,7){\circle*{1}}
\put(0,2){\line(1,0){5}}
\put(0,2){\line(0,1){5}}
\put(-2,-0.5){\scriptsize 4}
\put(-2,8){\scriptsize 1}
\put(5.5,8){\scriptsize 2}
\put(5.5,-0.5){\scriptsize 3}
\end{picture}}
\end{center} remove all explicitly zero commutators from the set~\eqref{temp_setcomm4}and gather all commutators with indices in
$\{1, 2, 3\}$ into a set $A'_1$. The set of remaining commutators is
\begin{align*}
A_1'= \{&[\mu_3, \mu_2, \mu_2, \mu_1], [\mu_3, \mu_1, \mu_2, \mu_1], [\mu_3, \mu_1, \mu_1, \mu_2], [\mu_3, \mu_2, \mu_2, \mu_2],
\\
&[\mu_3, \mu_2, \mu_1, \mu_2], [\mu_3, \mu_1, \mu_2, \mu_3], [\mu_2, \mu_1, \mu_1, \mu_1], [\mu_3, \mu_1, \mu_1, \mu_1] \},
\\\overline{A_2}= \{&[\mu_4, \mu_2, \mu_2, \mu_1], [\mu_4, \mu_2, \mu_1, \mu_2] \},
\\\overline{A_4}= \{&[\mu_4, \mu_2, \mu_2, \mu_2], [\mu_4, \mu_2, \mu_3, \mu_2], [\mu_4, \mu_2, \mu_2, \mu_3], [\mu_4, \mu_2, \mu_3, \mu_4] \},
\\\overline{B}= \{&[\mu_2, \mu_4, \mu_3, \mu_1], [\mu_2, \mu_4, \mu_1, \mu_3] \}.
\end{align*}
The set $A_1'$ contains only eight commutators and is minimal by Theorem~\ref{commcox3}. We gather all commutators with indices in $\{1, 2, 4\}$ into a set $A_2'$,
\begin{align*}
A_1' = \{&[\mu_3, \mu_2, \mu_2, \mu_1], [\mu_3, \mu_1, \mu_2, \mu_1], [\mu_3, \mu_1, \mu_1, \mu_2], [\mu_3, \mu_2, \mu_2, \mu_2],
\\&[\mu_3, \mu_2, \mu_1, \mu_2], [\mu_3, \mu_1, \mu_2, \mu_3], [\mu_3, \mu_1, \mu_1, \mu_1] \},
\\A_2' = \{&[\mu_4, \mu_2, \mu_2, \mu_1], [\mu_4, \mu_2, \mu_1, \mu_2], [\mu_2, \mu_1, \mu_1, \mu_1], [\mu_4, \mu_2, \mu_2, \mu_2] \},
\\\overline{A_4} = \{&[\mu_4, \mu_2, \mu_3, \mu_2], [\mu_4, \mu_2, \mu_2, \mu_3], [\mu_4, \mu_2, \mu_3, \mu_4] \},
\\\overline{B} = \{&[\mu_2, \mu_4, \mu_3, \mu_1], [\mu_2, \mu_4, \mu_1, \mu_3] \}.
\end{align*}
The set $A_2'$ contains only four commutators and is minimal by Theorem~\ref{commcox3}. We gather all commutators with indices in $\{1, 3, 4\}$ into a set $A_3'$,
\begin{align*}
A_1' = \{&[\mu_3, \mu_2, \mu_2, \mu_1], [\mu_3, \mu_1, \mu_2, \mu_1], [\mu_3, \mu_1, \mu_1, \mu_2], [\mu_3, \mu_2, \mu_2, \mu_2],
\\&[\mu_3, \mu_2, \mu_1, \mu_2], [\mu_3, \mu_1, \mu_2, \mu_3] \},
\\A_2' = \{&[\mu_4, \mu_2, \mu_2, \mu_1], [\mu_4, \mu_2, \mu_1, \mu_2], [\mu_2, \mu_1, \mu_1, \mu_1], [\mu_4, \mu_2, \mu_2, \mu_2] \},
\\A_3' = \{&[\mu_3, \mu_1, \mu_1, \mu_1] \},
\\\overline{A_4} = \{&[\mu_4, \mu_2, \mu_3, \mu_2], [\mu_4, \mu_2, \mu_2, \mu_3], [\mu_4, \mu_2, \mu_3, \mu_4] \},
\\\overline{B} = \{&[\mu_2, \mu_4, \mu_3, \mu_1], [\mu_2, \mu_4, \mu_1, \mu_3] \}.
\end{align*}
The set $A_3'$ contains only one commutator and is minimal by Theorem~\ref{commcox3}. We gather all commutators with indices in $\{2, 3, 4\}$ into a set $A_4'$,
\begin{align*}
A_1' = \{&[\mu_3, \mu_2, \mu_2, \mu_1], [\mu_3, \mu_1, \mu_2, \mu_1], [\mu_3, \mu_1, \mu_1, \mu_2], [\mu_3, \mu_2, \mu_1, \mu_2],\\&[\mu_3, \mu_1, \mu_2, \mu_3] \},
\\A_2' = \{&[\mu_4, \mu_2, \mu_2, \mu_1], [\mu_4, \mu_2, \mu_1, \mu_2], [\mu_2, \mu_1, \mu_1, \mu_1] \},
\\A_3' = \{&[\mu_3, \mu_1, \mu_1, \mu_1] \},
\\A_4' = \{&[\mu_4, \mu_2, \mu_3, \mu_2], [\mu_4, \mu_2, \mu_2, \mu_3], [\mu_4, \mu_2, \mu_3, \mu_4], [\mu_4, \mu_2, \mu_2, \mu_2],\\&[\mu_3, \mu_2, \mu_2, \mu_2] \},
\\\overline{B} = \{&[\mu_2, \mu_4, \mu_3, \mu_1], [\mu_2, \mu_4, \mu_1, \mu_3] \}.
\end{align*}
By Theorem~\ref{commcox3}, all commutators in the set $A_4'$ can be replaced by the four commutators $[\mu_3, \mu_2, \mu_3, \mu_3], [\mu_2, \mu_4, \mu_2, \mu_2], [\mu_2, \mu_4, \mu_2, \mu_3], [\mu_2, \mu_4, \mu_3, \mu_2]$. We obtain the following set of commutators:
\begin{align*}
A_1' = \{&[\mu_3, \mu_2, \mu_2, \mu_1], [\mu_3, \mu_1, \mu_2, \mu_1], [\mu_3, \mu_1, \mu_1, \mu_2], [\mu_3, \mu_2, \mu_1, \mu_2],\\&[\mu_3, \mu_1, \mu_2, \mu_3] \},
\\A_2' = \{&[\mu_4, \mu_2, \mu_2, \mu_1], [\mu_4, \mu_2, \mu_1, \mu_2], [\mu_2, \mu_1, \mu_1, \mu_1] \},
\\A_3' = \{&[\mu_3, \mu_1, \mu_1, \mu_1] \},
\\A_4' = \{&[\mu_3, \mu_2, \mu_3, \mu_3], [\mu_2, \mu_4, \mu_2, \mu_2], [\mu_2, \mu_4, \mu_2, \mu_3], [\mu_2, \mu_4, \mu_3, \mu_2] \},
\\\overline{B} = \{&[\mu_2, \mu_4, \mu_3, \mu_1], [\mu_2, \mu_4, \mu_1, \mu_3] \}.
\end{align*}
This set of commutators is minimal, because the assumption that some of the commutators can be
removed from $A_i'$ contradicts Theorem~\ref{commcox3}. Expanding the commutators in $A_i'$, we see that they contain
monomials with only two or three indices. Expanding a commutator in $\overline{B}$, we see that the monomials
contain all four indices. It follows that in $L^4(\racg_\sK)$ any commutator in $\overline{B}$ cannot be expressed via the
other commutators, because the algebra is over $\mathbb Z_2$, and should there exist such an expression, it would
be a sum of other commutators, which is impossible because of the indices. It remains to prove that $$[\mu_2, \mu_4, \mu_3, \mu_1] \neq 0,~ [\mu_2, \mu_4, \mu_1, \mu_3] \neq 0.$$ Note that $\sK^1$ is a chordal graph and hence all $\gamma_s(\racg_\sK)$ are free groups (see~\cite[Theorem 4.3]{pa-ve}). By~\cite[Theorem 4.5]{pa-ve} the set
\begin{align*}
&(g_1,  g_3), (g_1,  g_2), (g_2,  g_3), (g_2,  g_4),
\\&(g_2, g_4, g_1), (g_2, g_4, g_3), (g_1, g_3, g_2), (g_2, g_3, g_1),
(g_2, g_4, g_3, g_1),
\end{align*}
is a basis of $\gamma_2(\racg_\sK)$,
whence it follows that $[\mu_2, \mu_4, \mu_3, \mu_1] \neq 0$, because the corresponding element in the group is not zero and does not lie in $\gamma_5(\racg_\sK)$ (see Proposition~\ref{notgamma5elems}). Further, note that the change $1 \leftrightarrow  3 $ preserves the structure of the graph, which means that $[\mu_2, \mu_4, \mu_1, \mu_3]$ is a nonzero element of $L^4(\racg_\sK)$ as well. It remains to prove that the commutators in $\overline{B}$ cannot be expressed via each other. We explicitly express $(g_2, g_4, g_1, g_3)$ via $(g_2, g_4, g_3, g_1)$ in $\gamma_2(\racg_\sK)$, using the second relation in~\ref{comtogPV}:
\begin{multline*}
    (g_2, g_4, g_1, g_3) = \\
    = (g_2, g_4, g_1)^{-1} (g_2, g_4, g_3)^{-1} (g_2, g_4)^{-1} (g_1, g_3)^{-1} (g_2, g_4) (g_2, g_4, g_1) (g_2, g_4, g_3) \\
    (g_2, g_4, g_3, g_1) (g_1, g_3) \equiv ((g_2, g_4), (g_1, g_3)) (g_2, g_4, g_3, g_1) \mod \gamma_5(\racg_\sK).
\end{multline*}
We see that these two commutators differ by $((g_2, g_4), (g_1, g_3))$, which is the commutator of two basis
elements in $\gamma_2(\racg_\sK)$. Since $\gamma_2(\racg_\sK)$ is a free group, it follows that this commutator is nonzero, and
further, it does not lie in $\gamma_5(\racg_\sK)$ (see Proposition~\ref{notgamma5elems}). Hence $$[\mu_2, \mu_4, \mu_3, \mu_1] \neq [\mu_2, \mu_4, \mu_1, \mu_3].$$ Should the commutators be expressible via each other, the relation $[\mu_2, \mu_4, \mu_3, \mu_1] = [\mu_2, \mu_4, \mu_1, \mu_3]^s$ would hold for some $s \in \mathbb N$, but since $L(\racg_\sK)$ is an algebra over $\mathbb Z_2$, we obtain zero for even $s$ and the
commutator itself for odd $s$, and they are not equal to each other as was shown above.

By the symmetry of the indices, we see that in the presence of the edges $\{i, j\}$ and $\{i, l\}$ the set
\begin{align*}
A_1' = \{&[\mu_l, \mu_k, \mu_k, \mu_j], [\mu_l, \mu_j, \mu_k, \mu_j], [\mu_l, \mu_j, \mu_j, \mu_k],
\\&[\mu_l, \mu_k, \mu_j, \mu_k], [\mu_l, \mu_j, \mu_k, \mu_l] \},
\\A_2' = \{&[\mu_i, \mu_k, \mu_k, \mu_j], [\mu_i, \mu_k, \mu_j, \mu_k], [\mu_k, \mu_j, \mu_j, \mu_j] \},
\\A_3' = \{&[\mu_l, \mu_j, \mu_j, \mu_j] \},
\\A_4' = \{&[\mu_l, \mu_k, \mu_l, \mu_l], [\mu_k, \mu_i, \mu_k, \mu_k], [\mu_k, \mu_i, \mu_k, \mu_l], [\mu_k, \mu_i, \mu_l, \mu_k] \},
\\\overline{B} = \{&[\mu_k, \mu_i, \mu_l, \mu_j], [\mu_k, \mu_i, \mu_j, \mu_l] \}
\end{align*}
of commutators minimally generates $L^4(\racg_\sK)\cong\mathbb Z_2^{15}$.

In the second case, we take $i = 1, j = 3, k = 2, l  =4$,
\begin{center}
\scalebox{1.5}{
\begin{picture}(6,7)
\put(0,2){\circle*{1}}
\put(5,2){\circle*{1}}
\put(0,7){\circle*{1}}
\put(5,7){\circle*{1}}
\put(0,2){\line(0,1){5}}
\put(5,7){\line(0,-1){5}}
\put(-2,-0.5){\scriptsize 1}
\put(-2,8){\scriptsize 3}
\put(5.5,8){\scriptsize 2}
\put(5.5,-0.5){\scriptsize 4}
\end{picture}}
\end{center} remove the commutators explicitly equal to zero from~\eqref{temp_setcomm4}, and gather all commutators with indices in $\{1, 2, 3\}$ into a set $A'_1$. The set of remaining commutators is
\begin{align*}
A_1' = \{&[\mu_3, \mu_2, \mu_2, \mu_2], [\mu_2, \mu_1, \mu_1, \mu_1], [\mu_3, \mu_2, \mu_2, \mu_1], [\mu_3, \mu_2, \mu_1, \mu_2] \},
\\\overline{A_2} = \{&[\mu_4, \mu_1, \mu_2, \mu_1] [\mu_4, \mu_1, \mu_1, \mu_2], [\mu_4, \mu_1, \mu_2, \mu_4] \},
\\\overline{A_3} = \{&[\mu_4, \mu_1, \mu_1, \mu_1], [\mu_4, \mu_3, \mu_3, \mu_1], [\mu_4, \mu_1, \mu_3, \mu_1], [\mu_4, \mu_1, \mu_1, \mu_3],
\\&[\mu_4, \mu_3, \mu_1, \mu_3], [\mu_4, \mu_1, \mu_3, \mu_4] \},
\\\overline{A_4} = \{&[\mu_4, \mu_3, \mu_3, \mu_2], [\mu_4, \mu_3, \mu_3, \mu_3], [\mu_4, \mu_3, \mu_2, \mu_3] \},
\\\overline{B} = \{&[\mu_1, \mu_4, \mu_3, \mu_2], [\mu_1, \mu_4, \mu_2, \mu_3], [\mu_3, \mu_4, \mu_1, \mu_2], [\mu_3, \mu_4, \mu_2, \mu_1] \}.
\end{align*}
By Theorem~\ref{commcox3}, all commutators in the set $A'_1$ can be replaced by the four commutators $[\mu_1, \mu_2, \mu_1, \mu_1], [\mu_2, \mu_3, \mu_2, \mu_2], [\mu_2, \mu_3, \mu_2, \mu_1], [\mu_2, \mu_3, \mu_1, \mu_2]$. After this, we gather all commutators with indices
in $\{1, 2, 4\}$ into a set $A'_2$,
\begin{align*}
A_1' = \{&[\mu_2, \mu_3, \mu_2, \mu_2], [\mu_2, \mu_3, \mu_2, \mu_1], [\mu_2, \mu_3, \mu_1, \mu_2] \},
\\A_2' = \{&[\mu_4, \mu_1, \mu_1, \mu_1], [\mu_1, \mu_2, \mu_1, \mu_1], [\mu_4, \mu_1, \mu_2, \mu_1], [\mu_4, \mu_1, \mu_1, \mu_2],\\&[\mu_4, \mu_1, \mu_2, \mu_4] \},
\\\overline{A_3} = \{&[\mu_4, \mu_3, \mu_3, \mu_1], [\mu_4, \mu_1, \mu_3, \mu_1], [\mu_4, \mu_1, \mu_1, \mu_3],
\\&[\mu_4, \mu_3, \mu_1, \mu_3], [\mu_4, \mu_1, \mu_3, \mu_4] \},
\\\overline{A_4} = \{&[\mu_4, \mu_3, \mu_3, \mu_2], [\mu_4, \mu_3, \mu_3, \mu_3], [\mu_4, \mu_3, \mu_2, \mu_3] \},
\\\overline{B} = \{&[\mu_1, \mu_4, \mu_3, \mu_2], [\mu_1, \mu_4, \mu_2, \mu_3], [\mu_3, \mu_4, \mu_1, \mu_2], [\mu_3, \mu_4, \mu_2, \mu_1] \}.
\end{align*}
In a similar way, all commutators in the set $A'_2$ can be replaced by the four commutators $[\mu_2, \mu_1, \mu_2, \mu_2], [\mu_1, \mu_4, \mu_1, \mu_1], [\mu_1, \mu_4, \mu_1, \mu_2], [\mu_1, \mu_4, \mu_2, \mu_1]$. After this, we gather all commutators with indices in $\{1, 3, 4\}$ into a set $A_3'$,
\begin{align*}
A_1' = \{&[\mu_2, \mu_3, \mu_2, \mu_2], [\mu_2, \mu_3, \mu_2, \mu_1], [\mu_2, \mu_3, \mu_1, \mu_2] \},
\\A_2' = \{&[\mu_2, \mu_1, \mu_2, \mu_2], [\mu_1, \mu_4, \mu_1, \mu_2], [\mu_1, \mu_4, \mu_2, \mu_1] \},
\\A_3' = \{&[\mu_4, \mu_3, \mu_3, \mu_3], [\mu_1, \mu_4, \mu_1, \mu_1], [\mu_4, \mu_3, \mu_3, \mu_1], [\mu_4, \mu_1, \mu_3, \mu_1],
\\&[\mu_4, \mu_1, \mu_1, \mu_3], [\mu_4, \mu_3, \mu_1, \mu_3], [\mu_4, \mu_1, \mu_3, \mu_4] \},
\\\overline{A_4} = \{&[\mu_4, \mu_3, \mu_3, \mu_2], [\mu_4, \mu_3, \mu_2, \mu_3] \},
\\\overline{B} = \{&[\mu_1, \mu_4, \mu_3, \mu_2], [\mu_1, \mu_4, \mu_2, \mu_3], [\mu_3, \mu_4, \mu_1, \mu_2], [\mu_3, \mu_4, \mu_2, \mu_1] \}.
\end{align*}
In a similar way, all commutators in the set $A'_3$ can be replaced by the four commutators $[\mu_1, \mu_4, \mu_1, \mu_1], [\mu_4, \mu_3, \mu_4, \mu_4], [\mu_4, \mu_3, \mu_4, \mu_1], [\mu_4, \mu_3, \mu_1, \mu_4]$. After this, we gather all commutators with indices
in $\{2, 3, 4\}$ into a set $A'_4$,
\begin{align*}
A_1' = \{&[\mu_2, \mu_3, \mu_2, \mu_1], [\mu_2, \mu_3, \mu_1, \mu_2] \},
\\A_2' = \{&[\mu_2, \mu_1, \mu_2, \mu_2], [\mu_1, \mu_4, \mu_1, \mu_2], [\mu_1, \mu_4, \mu_2, \mu_1] \},
\\A_3' = \{&[\mu_1, \mu_4, \mu_1, \mu_1], [\mu_4, \mu_3, \mu_4, \mu_1], [\mu_4, \mu_3, \mu_1, \mu_4] \},
\\A_4' = \{&[\mu_4, \mu_3, \mu_4, \mu_4], [\mu_2, \mu_3, \mu_2, \mu_2], [\mu_4, \mu_3, \mu_3, \mu_2], [\mu_4, \mu_3, \mu_2, \mu_3] \},
\\\overline{B} = \{&[\mu_1, \mu_4, \mu_3, \mu_2], [\mu_1, \mu_4, \mu_2, \mu_3], [\mu_3, \mu_4, \mu_1, \mu_2], [\mu_3, \mu_4, \mu_2, \mu_1] \}.
\end{align*}
In a similar way, all commutators in the set $A_4'$ can be replaced by the four commutators $[\mu_2, \mu_3, \mu_2, \mu_2], [\mu_3, \mu_4, \mu_3, \mu_3], [\mu_3, \mu_4, \mu_3, \mu_2], [\mu_3, \mu_4, \mu_2, \mu_3]$. We obtain the following set of commutators:
\begin{align*}
A_1' = \{&[\mu_2, \mu_3, \mu_2, \mu_1], [\mu_2, \mu_3, \mu_1, \mu_2] \},
\\
A_2' = \{&[\mu_2, \mu_1, \mu_2, \mu_2], [\mu_1, \mu_4, \mu_1, \mu_2], [\mu_1, \mu_4, \mu_2, \mu_1] \},
\\
A_3' = \{&[\mu_1, \mu_4, \mu_1, \mu_1], [\mu_4, \mu_3, \mu_4, \mu_1], [\mu_4, \mu_3, \mu_1, \mu_4] \},
\\
A_4' = \{&[\mu_2, \mu_3, \mu_2, \mu_2], [\mu_3, \mu_4, \mu_3, \mu_3], [\mu_3, \mu_4, \mu_3, \mu_2], [\mu_3, \mu_4, \mu_2, \mu_3] \},
\\
\overline{B} = \{&[\mu_1, \mu_4, \mu_3, \mu_2], [\mu_1, \mu_4, \mu_2, \mu_3], [\mu_3, \mu_4, \mu_1, \mu_2], [\mu_3, \mu_4, \mu_2, \mu_1] \}.
\end{align*}
We will show that $[\mu_3, \mu_4, \mu_1, \mu_2] = [\mu_1, \mu_4, \mu_3, \mu_2]$. Let us write the Jacobi identity for the ``outer'' commutators of the element $[\mu_3, \mu_4, \mu_2, \mu_1] = [[[\mu_3, \mu_4], \mu_2], \mu_1]$:
\begin{equation*}
[[[\mu_3, \mu_4], \mu_2], \mu_1] + [[\mu_2, \mu_1], [\mu_3, \mu_4]] + [[\mu_1, [\mu_3, \mu_4]], \mu_2] = 0.
\end{equation*}
This is equivalent to 
\begin{equation}\label{FC_eq1}
 [[\mu_4, \mu_3], [\mu_2, \mu_1]] = [[[\mu_3, \mu_4], \mu_2], \mu_1] + [[[\mu_3, \mu_4], \mu_1], \mu_2].
\end{equation}
In a similar way, from the commutator $[[[\mu_1, \mu_4], \mu_2], \mu_3]$ we obtain
\begin{equation}\label{FC_eq2}
[[\mu_2, \mu_3], [\mu_1, \mu_4]] = [[[\mu_1, \mu_4], \mu_2], \mu_3] + [[[\mu_1, \mu_4], \mu_3], \mu_2].
\end{equation}
We also write the Jacobi identity for the inner commutators of $[[[\mu_3, \mu_4], \mu_2], \mu_1]$,
\begin{equation*}
[[[\mu_3, \mu_4], \mu_2], \mu_1] + [[[\mu_4, \mu_2], \mu_3], \mu_1] + [[[\mu_2, \mu_3], \mu_4], \mu_1] = 0.
\end{equation*}
Since $[\mu_4, \mu_2] = 0$, we have
\begin{equation*}
[\mu_3, \mu_4, \mu_2, \mu_1] = [\mu_2, \mu_3, \mu_4, \mu_1].
\end{equation*}
Now we write the Jacobi identity for the ``inner'' commutators of the element $[\mu_2, \mu_3, \mu_4, \mu_1]$,
\begin{equation*}
[[[\mu_2, \mu_3], \mu_4], \mu_1] + [[\mu_4, \mu_1], [\mu_2, \mu_3]] + [[\mu_1, [\mu_2, \mu_3]], \mu_4] = 0.
\end{equation*}
From the last two identities, substituting \eqref{FC_eq2}, we have
\begin{equation}\label{FC_eq3}
[\mu_2, \mu_3, \mu_1, \mu_4] = [\mu_3, \mu_4, \mu_2, \mu_1] + [\mu_1, \mu_4, \mu_2, \mu_3] + [\mu_1, \mu_4, \mu_3, \mu_2].
\end{equation}
We carry out a similar argument for the ``inner'' commutators of $[\mu_1, \mu_4, \mu_2, \mu_3]$,
\begin{equation*}
[[[\mu_1, \mu_4], \mu_2], \mu_3] + [[[\mu_4, \mu_2], \mu_1], \mu_3] + [[[\mu_2, \mu_1], \mu_4], \mu_3] = 0 \Rightarrow
\end{equation*}
\begin{equation*}
\Rightarrow [\mu_1, \mu_4, \mu_2, \mu_3] = [\mu_2, \mu_1, \mu_4, \mu_3].
\end{equation*}
Consider the Jacobi identity
\begin{equation*}
 [[[\mu_2, \mu_1], \mu_4], \mu_3] + [[\mu_4, \mu_3], [\mu_2, \mu_1]] + [[[\mu_2, \mu_1], \mu_3], \mu_4] = 0.
\end{equation*}
From the last two identities, substituting \eqref{FC_eq1}, we have
\begin{equation}\label{FC_eq4}
[\mu_2, \mu_1, \mu_3, \mu_4] =  [\mu_1, \mu_4, \mu_2, \mu_3] + [\mu_3, \mu_4, \mu_2, \mu_1] + [\mu_3, \mu_4, \mu_1, \mu_2].
\end{equation}
Now note that the commutators $[\mu_2, \mu_1, \mu_3, \mu_4]$ and $[\mu_2, \mu_3, \mu_1, \mu_4]$ can readily be related to each other. Indeed, let us write the Jacobi identity for the ``inner'' commutators of $[\mu_2, \mu_1, \mu_3, \mu_4]$:
$$
[\mu_2, \mu_1, \mu_3, \mu_4] + [\mu_1, \mu_3, \mu_2, \mu_4] + [\mu_3, \mu_2, \mu_1, \mu_4] = 0.
$$
Since $[\mu_1, \mu_3] = 0$, it follows from this identity that
$
[\mu_2, \mu_1, \mu_3, \mu_4] = [\mu_2, \mu_3, \mu_1, \mu_4].
$
Substituting \eqref{FC_eq3} and \eqref{FC_eq4} into the preceding relation, we obtain
\begin{multline*}
[\mu_1, \mu_4, \mu_2, \mu_3] + [\mu_3, \mu_4, \mu_2, \mu_1] + [\mu_3, \mu_4, \mu_1, \mu_2]\\= [\mu_3, \mu_4, \mu_2, \mu_1] + [\mu_1, \mu_4, \mu_2, \mu_3] + [\mu_1, \mu_4, \mu_3, \mu_2],
\end{multline*}
whence
$$[\mu_3, \mu_4, \mu_1, \mu_2] = [\mu_1, \mu_4, \mu_3, \mu_2].$$
We obtain the new set
\begin{align*}
A_1' = \{&[\mu_2, \mu_3, \mu_2, \mu_1], [\mu_2, \mu_3, \mu_1, \mu_2] \},
\\
A_2' = \{&[\mu_2, \mu_1, \mu_2, \mu_2], [\mu_1, \mu_4, \mu_1, \mu_2], [\mu_1, \mu_4, \mu_2, \mu_1] \},
\\
A_3' = \{&[\mu_1, \mu_4, \mu_1, \mu_1], [\mu_4, \mu_3, \mu_4, \mu_1], [\mu_4, \mu_3, \mu_1, \mu_4] \},
\\
A_4' = \{&[\mu_2, \mu_3, \mu_2, \mu_2], [\mu_3, \mu_4, \mu_3, \mu_3], [\mu_3, \mu_4, \mu_3, \mu_2], [\mu_3, \mu_4, \mu_2, \mu_3] \},
\\
\overline{B} = \{&[\mu_1, \mu_4, \mu_2, \mu_3], [\mu_3, \mu_4, \mu_1, \mu_2], [\mu_3, \mu_4, \mu_2, \mu_1] \}.
\end{align*}
of commutators. This set is minimal by Proposition~\ref{numbergens4twoedges}. By the symmetry of the indices, we see that,
in the presence of the edges $\{i, j\}$, $\{k, l\}$, the set
\begin{align*}
A_i' = \{&[\mu_k, \mu_j, \mu_k, \mu_i], [\mu_k, \mu_j, \mu_i, \mu_k] \},
\\
A_k' = \{&[\mu_k, \mu_i, \mu_k, \mu_k], [\mu_i, \mu_l, \mu_i, \mu_k], [\mu_i, \mu_l, \mu_k, \mu_i] \},
\\
A_j' = \{&[\mu_i, \mu_l, \mu_i, \mu_i], [\mu_l, \mu_j, \mu_l, \mu_i], [\mu_l, \mu_j, \mu_i, \mu_l] \},
\\
A_l' = \{&[\mu_k, \mu_j, \mu_k, \mu_k], [\mu_j, \mu_l, \mu_j, \mu_j], [\mu_j, \mu_l, \mu_j, \mu_k], [\mu_j, \mu_l, \mu_k, \mu_j] \},
\\
\overline{B} = \{&[\mu_i, \mu_l, \mu_k, \mu_j], [\mu_j, \mu_l, \mu_i, \mu_k], [\mu_j, \mu_l, \mu_k, \mu_i] \}
\end{align*}
of commutators minimally generates $L^4(\racg_\sK)\cong\mathbb Z_2^{15}$.

In item (f), we assume that there is only one edge $\{i, j\}$. We take $i = 3, j = 4$,
\begin{center}
\scalebox{1.5}{
\begin{picture}(6,7)
\put(0,2){\circle*{1}}
\put(5,2){\circle*{1}}
\put(0,7){\circle*{1}}
\put(5,7){\circle*{1}}
\put(5,7){\line(0,-1){5}}
\put(-2,-0.5){\scriptsize 1}
\put(-2,8){\scriptsize 2}
\put(5.5,8){\scriptsize 3}
\put(5.5,-0.5){\scriptsize 4}
\end{picture}}
\end{center} remove the commutators explicitly equal to zero from~\eqref{temp_setcomm4}, and gather all commutators with indices in
$\{1, 2, 3\}$ into a set $A'_1$. The set of remaining commutators is
\begin{align*}
A_1' = \{&[\mu_3, \mu_2, \mu_2, \mu_1], [\mu_3, \mu_1, \mu_2, \mu_1], [\mu_3, \mu_1, \mu_1, \mu_2], [\mu_3, \mu_2, \mu_1, \mu_2],
\\&
[\mu_2, \mu_1, \mu_1, \mu_1], [\mu_3, \mu_1, \mu_2, \mu_3], [\mu_3, \mu_2, \mu_2, \mu_2], [\mu_3, \mu_1, \mu_1, \mu_1] \},
\\
\overline{A_2} = \{&[\mu_4, \mu_2, \mu_2, \mu_1], [\mu_4, \mu_1, \mu_2, \mu_1], [\mu_4, \mu_1, \mu_1, \mu_2],
\\&
[\mu_4, \mu_2, \mu_1, \mu_2], [\mu_4, \mu_1, \mu_2, \mu_4] \},
\\
\overline{A_3} = \{&[\mu_4, \mu_1, \mu_1, \mu_1], [\mu_4, \mu_1, \mu_3, \mu_1], [\mu_4, \mu_1, \mu_1, \mu_3], [\mu_4, \mu_1, \mu_3, \mu_4] \},
\\
\overline{A_4} = \{&[\mu_4, \mu_2, \mu_2, \mu_2], [\mu_4, \mu_2, \mu_3, \mu_2], [\mu_4, \mu_2, \mu_2, \mu_3], [\mu_4, \mu_2, \mu_3, \mu_4] \},
\\
\overline{B} = \{&[\mu_2, \mu_4, \mu_3, \mu_1], [\mu_1, \mu_4, \mu_3, \mu_2], [\mu_1, \mu_4, \mu_2, \mu_3], [\mu_2, \mu_4, \mu_1, \mu_3] \},
\end{align*}
By Corollary~\ref{cor_freeproduct3}, all commutators in the set $A'_1$ can be replaced by the eight commutators
\begin{align*}
&[\mu_2, \mu_1, \mu_1, \mu_1], [\mu_3, \mu_1, \mu_1, \mu_1], [\mu_3, \mu_2, \mu_2, \mu_1], [\mu_3, \mu_1, \mu_2, \mu_1],
\\&[\mu_3, \mu_1, \mu_1, \mu_2],[\mu_3, \mu_2, \mu_2, \mu_2], [\mu_3, \mu_2, \mu_1, \mu_2], [\mu_3, \mu_1, \mu_2, \mu_3].
\end{align*}
After this, we gather all commutators with indices $\{1, 2, 4\}$ into a set $A'_2$,
\begin{align*}
A_1' = \{&[\mu_3, \mu_1, \mu_1, \mu_1], [\mu_3, \mu_2, \mu_2, \mu_1], [\mu_3, \mu_1, \mu_2, \mu_1], [\mu_3, \mu_1, \mu_1, \mu_2],
\\&
[\mu_3, \mu_2, \mu_2, \mu_2], [\mu_3, \mu_2, \mu_1, \mu_2], [\mu_3, \mu_1, \mu_2, \mu_3] \},
\\
A_2' = \{&[\mu_4, \mu_1, \mu_1, \mu_1], [\mu_4, \mu_2, \mu_2, \mu_2], [\mu_2, \mu_1, \mu_1, \mu_1], [\mu_4, \mu_2, \mu_2, \mu_1],
\\&
[\mu_4, \mu_1, \mu_2, \mu_1], [\mu_4, \mu_1, \mu_1, \mu_2], [\mu_4, \mu_2, \mu_1, \mu_2], [\mu_4, \mu_1, \mu_2, \mu_4] \},
\\
\overline{A_3} = \{&[\mu_4, \mu_1, \mu_3, \mu_1], [\mu_4, \mu_1, \mu_1, \mu_3], [\mu_4, \mu_1, \mu_3, \mu_4] \},
\\
\overline{A_4} = \{&[\mu_4, \mu_2, \mu_3, \mu_2], [\mu_4, \mu_2, \mu_2, \mu_3], [\mu_4, \mu_2, \mu_3, \mu_4] \},
\\
\overline{B} = \{&[\mu_2, \mu_4, \mu_3, \mu_1], [\mu_1, \mu_4, \mu_3, \mu_2], [\mu_1, \mu_4, \mu_2, \mu_3], [\mu_2, \mu_4, \mu_1, \mu_3] \},
\end{align*}
In a similar way, all commutators in the set $A'_2$ can be replaced by the eight commutators
\begin{align*}
[\mu_2, \mu_1, \mu_1, \mu_1], [\mu_4, \mu_1, \mu_1, \mu_1], [\mu_4, \mu_2, \mu_2, \mu_1], [\mu_4, \mu_1, \mu_2, \mu_1],\\
[\mu_4, \mu_1, \mu_1, \mu_2], [\mu_4, \mu_2, \mu_2, \mu_2], [\mu_4, \mu_2, \mu_1, \mu_2], [\mu_4, \mu_1, \mu_2, \mu_4].
\end{align*}
After this we gather all commutators with indices in $\{1, 3, 4\}$ into a set $A_3'$,
\begin{align*}
A_1' = \{&[\mu_3, \mu_2, \mu_2, \mu_1], [\mu_3, \mu_1, \mu_2, \mu_1], [\mu_3, \mu_1, \mu_1, \mu_2],
\\&
[\mu_3, \mu_2, \mu_2, \mu_2], [\mu_3, \mu_2, \mu_1, \mu_2], [\mu_3, \mu_1, \mu_2, \mu_3] \},
\\
A_2' = \{&[\mu_2, \mu_1, \mu_1, \mu_1], [\mu_4, \mu_2, \mu_2, \mu_1], [\mu_4, \mu_1, \mu_2, \mu_1], [\mu_4, \mu_1, \mu_1, \mu_2],
\\&
[\mu_4, \mu_2, \mu_2, \mu_2], [\mu_4, \mu_2, \mu_1, \mu_2], [\mu_4, \mu_1, \mu_2, \mu_4] \},
\\
A_3' = \{&[\mu_3, \mu_1, \mu_1, \mu_1], [\mu_4, \mu_1, \mu_1, \mu_1], [\mu_4, \mu_1, \mu_3, \mu_1], [\mu_4, \mu_1, \mu_1, \mu_3],\\&[\mu_4, \mu_1, \mu_3, \mu_4] \},
\\
\overline{A_4} = \{&[\mu_4, \mu_2, \mu_3, \mu_2], [\mu_4, \mu_2, \mu_2, \mu_3], [\mu_4, \mu_2, \mu_3, \mu_4] \},
\\
\overline{B} = \{&[\mu_2, \mu_4, \mu_3, \mu_1], [\mu_1, \mu_4, \mu_3, \mu_2], [\mu_1, \mu_4, \mu_2, \mu_3], [\mu_2, \mu_4, \mu_1, \mu_3] \}.
\end{align*}
By Theorem~\ref{commcox3}, all commutators in the set $A'_3$ can be replaced by the four commutators $[\mu_3, \mu_1, \mu_3, \mu_3], [\mu_1, \mu_4, \mu_1, \mu_1], [\mu_1, \mu_4, \mu_1, \mu_3], [\mu_1, \mu_4, \mu_3, \mu_1]$. After this we gather all commutators with indices in $\{2, 3, 4\}$ into a set $A'_4$:
\begin{align*}
A_1' = \{&[\mu_3, \mu_2, \mu_2, \mu_1], [\mu_3, \mu_1, \mu_2, \mu_1], [\mu_3, \mu_1, \mu_1, \mu_2], [\mu_3, \mu_2, \mu_1, \mu_2],\\&[\mu_3, \mu_1, \mu_2, \mu_3] \},\\
A_2' = \{&[\mu_2, \mu_1, \mu_1, \mu_1], [\mu_4, \mu_2, \mu_2, \mu_1], [\mu_4, \mu_1, \mu_2, \mu_1], [\mu_4, \mu_1, \mu_1, \mu_2],
\\&
[\mu_4, \mu_2, \mu_1, \mu_2], [\mu_4, \mu_1, \mu_2, \mu_4] \},
\\
A_3' = \{&[\mu_3, \mu_1, \mu_3, \mu_3], [\mu_1, \mu_4, \mu_1, \mu_1], [\mu_1, \mu_4, \mu_1, \mu_3], [\mu_1, \mu_4, \mu_3, \mu_1] \},
\\
A_4' = \{&[\mu_3, \mu_2, \mu_2, \mu_2], [\mu_4, \mu_2, \mu_2, \mu_2], [\mu_4, \mu_2, \mu_3, \mu_2], [\mu_4, \mu_2, \mu_2, \mu_3],\\&[\mu_4, \mu_2, \mu_3, \mu_4] \},
\\
\overline{B} = \{&[\mu_2, \mu_4, \mu_3, \mu_1], [\mu_1, \mu_4, \mu_3, \mu_2], [\mu_1, \mu_4, \mu_2, \mu_3], [\mu_2, \mu_4, \mu_1, \mu_3] \}.
\end{align*}
In a similar way, all commutators in the set $A_4'$ can be replaced by the four commutators $[\mu_3, \mu_2, \mu_3, \mu_3], [\mu_2, \mu_4, \mu_2, \mu_2], [\mu_2, \mu_4, \mu_2, \mu_3], [\mu_2, \mu_4, \mu_3, \mu_2]$. We obtain the following set of commutators:
\begin{align*}
A_1' = \{&[\mu_3, \mu_2, \mu_2, \mu_1], [\mu_3, \mu_1, \mu_2, \mu_1], [\mu_3, \mu_1, \mu_1, \mu_2], [\mu_3, \mu_2, \mu_1, \mu_2],\\&[\mu_3, \mu_1, \mu_2, \mu_3] \},
\\
A_2' = \{&[\mu_2, \mu_1, \mu_1, \mu_1], [\mu_4, \mu_2, \mu_2, \mu_1], [\mu_4, \mu_1, \mu_2, \mu_1], [\mu_4, \mu_1, \mu_1, \mu_2],\\&
[\mu_4, \mu_2, \mu_1, \mu_2],
[\mu_4, \mu_1, \mu_2, \mu_4] \},
\\
A_3' = \{&[\mu_3, \mu_1, \mu_3, \mu_3], [\mu_1, \mu_4, \mu_1, \mu_1], [\mu_1, \mu_4, \mu_1, \mu_3], [\mu_1, \mu_4, \mu_3, \mu_1] \},
\\
A_4' = \{&[\mu_3, \mu_2, \mu_3, \mu_3], [\mu_2, \mu_4, \mu_2, \mu_2], [\mu_2, \mu_4, \mu_2, \mu_3], [\mu_2, \mu_4, \mu_3, \mu_2] \},
\\
\overline{B} = \{&[\mu_2, \mu_4, \mu_3, \mu_1], [\mu_1, \mu_4, \mu_3, \mu_2], [\mu_1, \mu_4, \mu_2, \mu_3], [\mu_2, \mu_4, \mu_1, \mu_3] \}.
\end{align*}
This set of commutators is minimal by Proposition~\ref{numbergens4oneedge}. By the symmetry of the indices, we see that in
the presence of the edge $\{i, j\}$ the set
\begin{align*}
A_k = \{&[\mu_i, \mu_l, \mu_l, \mu_k], [\mu_i, \mu_k, \mu_l, \mu_k], [\mu_i, \mu_k, \mu_k, \mu_l], [\mu_i, \mu_l, \mu_k, \mu_l],\\&
[\mu_i, \mu_k, \mu_l, \mu_i] \},
\\
A_l'= \{&[\mu_l, \mu_k, \mu_k, \mu_k], [\mu_j, \mu_l, \mu_l, \mu_k], [\mu_j, \mu_k, \mu_l, \mu_k], [\mu_j, \mu_k, \mu_k, \mu_l],\\
&[\mu_j, \mu_l, \mu_k, \mu_l], [\mu_j, \mu_k, \mu_l, \mu_j] \},
\\
A_i'= \{&[\mu_i, \mu_k, \mu_i, \mu_i], [\mu_k, \mu_j, \mu_k, \mu_k], [\mu_k, \mu_j, \mu_k, \mu_i], [\mu_k, \mu_j, \mu_i, \mu_k] \},
\\A_j'= \{&[\mu_i, \mu_l, \mu_i, \mu_i], [\mu_l, \mu_j, \mu_l, \mu_l], [\mu_l, \mu_j, \mu_l, \mu_i], [\mu_l, \mu_j, \mu_i, \mu_l] \},
\\\overline{B}= \{&[\mu_l, \mu_j, \mu_i, \mu_k], [\mu_k, \mu_j, \mu_i, \mu_l], [\mu_k, \mu_j, \mu_l, \mu_i], [\mu_l, \mu_j, \mu_k, \mu_i] \}
\end{align*}
of commutators minimally generates $L^4(\racg_\sK)\cong\mathbb Z_2^{23}$.
\end{proof}

\section{Generators of $L^4(\racg_\sK)$}

Using Proposition~\ref{coxeter4pointsgens}, we can write the generators of $L(\racg_\sK)$ for any simplicial complex $\sK$ on $[m]$. The algorithm is similar to the proof of some items of Proposition~\ref{coxeter4pointsgens}:
\begin{enumerate}
\item Using Theorem~\ref{theorem_freeproduct4}, we write the generators for the discrete simplcial complex $\sK'$ on $m$ points.
\item We take four points $J \subset [m]$ and then isolate the restriction $\sK_J$, gathering all commutators
corresponding to it into a set $A$.
\item Using Proposition~\ref{coxeter4pointsgens}, we write the minimal set $A'$ of generators for the resulting complex.
\item We return the set $A'$ to the general list of commutators instead of $A$.
\item We return to the first item of the list, take some other four points, and repeat this operation for
each quadruple of points (thus, the operation must be performed $\binom{m}{4}$ times).
\end{enumerate}

\begin{proof}
At step $3$, by Proposition~\ref{coxeter4pointsgens}, the subset $A'$ combined with the complement of $A$ generates $L^4(\racg_\sK)$, whence it follows that the resulting set generates $L^4(\racg_\sK)$. It remains to prove the minimality. Assume that the resulting set of commutators is not a minimal system of generators;
i.e., there exists a commutator $\alpha = [\mu_i, \mu_j, \mu_k, \mu_l]$ which can be removed and the remaining set still
generates $L^4(\racg_\sK)$. We consider the iteration of the algorithm when $\alpha$ was considered for the last time at step $4$ and take the same four points that were used for this iteration. By Proposition~\ref{coxeter4pointsgens}, the commutator $\alpha$ cannot be removed, which is a contradiction. It follows that the algorithm gives the desired result.
\end{proof}

\end{document}